\setlist[enumerate]{label=\textit{(\roman*)},ref=\textit{(\roman*)}}
\newtheoremstyle{slthm}% <name>
{0pt}% <Space above> % was \bigskipamount
{0pt}% <Space below> % was 3pt
{\normalfont}% <Body font>
{}% <Indent amount>
{\bfseries}% <Theorem head font>
{.}% <Punctuation after theorem head>
{.5em}% <Space after theorem headi>
{\thmname{#1}\thmnumber{ #2}\thmnote{ (#3)}}% <Theorem head spec (can be left empty, meaning `normal')>
\theoremstyle{slthm}
\newlength{\mythmbar}
\newlength{\myinsep}
\newlength{\myleftinsep}
\newlength{\mylen}
\colorlet{darkred}{red!60!black}
\colorlet{claimColor}{gray!60!white}
\colorlet{definitionColor}{orange!60!white}
\newcommand{\activestyle}{tn/minimal}
\newcommand{\activeCLAIMstyle}{tn/minimal claim}
\newcommand{\activeDEFINITIONstyle}{tn/minimal definition}
\newmdtheoremenv[style=\activestyle]{theorem}[algocf]{Theorem}
\newmdtheoremenv[style=\activestyle]{lemma}[algocf]{Lemma}
\newmdtheoremenv[style=\activestyle]{proposition}[algocf]{Proposition}
\newmdtheoremenv[style=\activeCLAIMstyle]{claim}[algocf]{Claim}
\newmdtheoremenv[style=\activestyle]{corollary}[algocf]{Corollary}
\newmdtheoremenv[style=\activestyle]{fact}[algocf]{Fact}
\newmdtheoremenv[style=\activeDEFINITIONstyle]{definition}[algocf]{Definition}
\newmdtheoremenv[style=\activeDEFINITIONstyle]{remark}[algocf]{Remark}
\newcommand{\defi}[1]{{\textcolor{darkred}{\emph{#1}}}}
\newcommand{\oldqed}{}
\def\endofClaim{\hfill{$\diamond$}}
\newenvironment{claimproof}[1][Proof]
{ \renewcommand{\oldqed}{\qedsymbol}
  \renewcommand{\qedsymbol}{\endofClaim}
  \begin{proof}[#1]}
{ \end{proof}
  \renewcommand{\qedsymbol}{\oldqed}}
\newcommand{\cala}{\ensuremath{\mathcal{A}}}
\newcommand{\cald}{\ensuremath{\mathcal{D}}}
\newcommand{\cale}{\ensuremath{\mathcal{E}}}
\newcommand{\cali}{\ensuremath{\mathcal{I}}}
\newcommand{\calm}{\ensuremath{\mathcal{M}}}
\newcommand{\calp}{\ensuremath{\mathcal{P}}}
\newcommand{\calt}{\ensuremath{\mathcal{T}}}
\newcommand{\calu}{\ensuremath{\mathcal{U}}}
\newcommand{\calv}{\ensuremath{\mathcal{V}}}
\newcommand{\bft}{\ensuremath{\mathbf{t}}}
\newcommand{\ee}{\mathrm{e}}
\newcommand{\bbe}{\mathbb{E}}
\newcommand{\bbn}{\mathbb{N}}
\newcommand{\bbp}{\mathbb{P}}
\newcommand{\bbr}{\mathbb{R}}
\newcommand{\littleo}{\ensuremath{\mathrm{o}}}
\newcommand{\Qunder}{\ensuremath{Q_{0}}}
\newcommand{\Vground}{V_{\mathrm{ground}}}
\newcommand{\treeAttach}{\ensuremath{A}}
\newcommand{\pathRoot}{\ensuremath{\Pi_1}}
\newcommand{\pathLast}{\ensuremath{\Pi_7}}
\newcommand{\pathOther}{\ensuremath{\Pi_{\mathrm{other}}}}
\newcommand{\verticesUsedAtStart}{\ensuremath{\Xi}}
\newcommand{\vtau}{v_\tau}
\newcommand{\ttau}{{t_\tau}}
\let\phi\varphi
\let\emptyset\varnothing
\newcommand{\eps}{\varepsilon}
\renewcommand{\setminus}{\smallsetminus}
\newcommand{\deq}{\coloneqq}
\newcommand{\tand}{\text{ and }}
\newcommand{\ink}{{\ensuremath{i\in[k]}}}
\newcommand{\jnk}{{\ensuremath{j\in[k]}}}
\newcommand{\rarr}{\rightarrow}
\newcommand{\larr}{\leftarrow}
\newcommand{\imm}{{i-1}}
\newcommand{\ipp}{{i+1}}
\newcommand{\dcup}{\mathbin{\dot{\cup}}}
\newcommand{\dplus}{{\ensuremath{d_\geq}}}
\newcommand{\commneigh}[3]{\ensuremath{N^{#1}(#2,#3)}}
\newcommand{\Jblow}{\ensuremath{J^{\mathrm{blow}}}}
\DeclareMathOperator{\dist}{dist}
\DeclareMathOperator{\bigoh}{O}
\DeclareMathOperator{\Var}{Var}
\DeclareMathOperator{\polylog}{polylog}
\DeclareMathOperator{\used}{used}
\DeclareMathOperator{\open}{open}
\DeclareMathOperator{\reserved}{reserved}
\newcommand{\gadget}[4]{%
  \ensuremath{\textstyle #1\,\genfrac{<}{>}{0pt}{}{#2}{#3}\, #4}}
\newsavebox{\ldiamond}
\savebox{\ldiamond}{\tikz[baseline={(0,.3ex)}]{\draw[semithick] (0,.3ex) --(1.3ex,0) (0,1.3ex) --(1.3ex,1.6ex);}}
\newsavebox{\rdiamond}
\savebox{\rdiamond}{\reflectbox{\usebox{\ldiamond}}}
\renewcommand{\gadget}[4]{%
  \ensuremath{\textstyle #1{\usebox{\ldiamond}}\genfrac{.}{.}{0pt}{}{#2}{#3}{\usebox{\rdiamond}} #4}}
\newcommand{\farc}{\textcolor{black}{{}\to{}}}
\newcommand{\barc}{\textcolor{black}{{}\leftarrow{}}}
\newcommand{\rstar}{{\ensuremath{R^\star}}}
\newcommand{\phase}[1]{
  \smallskip\par\noindent
  \makebox[0pt]{
    \hspace*{-1.5\parindent}
    \textbf{\color{darkred}{\scriptsize\raisebox{.4ex}{$\blacktriangleright$}}}}
  \textbf{#1}}
\begin{document}

\begin{abstract}
  We prove that every oriented tree on $n$~vertices with bounded
  maximum degree appears as a spanning subdigraph of every directed
  graph on $n$~vertices with minimum semidegree at least
  $n/2+\littleo(n)$. This can be seen as a directed graph analogue of
  a well-known theorem of Koml\'os, S\'ark\"ozy and~Szemer\'edi.
  Our result for trees follows from a more general result, allowing
  the embedding of arbitrary orientations of a much wider class of
  spanning ``tree-like'' structures, such as collections of at most $O(n^{0.99})$ pairwise vertex-disjoint cycles
  and subdivisions of graphs $H$ with~$|H|< \exp\bigl(\sqrt{\bigoh(\log n)}\,\bigr)$
  in which each edge is subdivided at least once.
\end{abstract}

\title{Trees and treelike structures in dense digraphs}

\author{Richard Mycroft}
\address{Richard Mycroft, University of Birmingham, Birmingham, B15\,2TT, United Kingdom}
\email{r.mycroft@bham.ac.uk}

\thanks{Richard Mycroft acknowledges support from EPSRC (Standard Grant
  EP/R034389/1).}

\author{T\'assio Naia}

\address{T\'assio Naia, Centre de Recerca Matemàtica,
 Campus Bellaterra, 08193, Barcelona, Spain}
\email{tnaia@member.fsf.org}

\thanks{T\'assio Naia acknowledges support from CNPq
  (201114/2014-3) and FAPESP (2019/04375-5).}

\maketitle

\section{Introduction}
\label{s:introduction}

A celebrated theorem of Koml\'os, S\'ark\"ozy
and~Szemer\'edi~\cite{komlos95:_proof_bollob} states that if~$G$ is a
graph of order~$n$ with~$\delta(G)\geq n/2+\littleo(n)$, then $G$
contains every tree of order~$n$ with bounded maximum degree.

\begin{theorem}\label{t:KSS}
  \cite{komlos95:_proof_bollob}
  For all~$\Delta\in\bbn$ and $\alpha > 0$ there exists $n_0$ such
  that every graph~$G$ of order $n\ge n_0$ with
  $\delta(G)\ge \bigl(\frac{1}{2}+\alpha\bigr)n$ contains every tree
  $T$ of order~$n$ with~$\Delta(T)\le \Delta$.
\end{theorem}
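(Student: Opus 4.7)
The plan is to apply Szemerédi's regularity lemma to $G$, find a Hamilton cycle in the reduced graph via Dirac's theorem, and embed $T$ cluster-by-cluster along this cycle. Fix $\eps \ll \alpha$ and apply the regularity lemma to obtain clusters $V_0, V_1, \ldots, V_t$, where $|V_0| \leq \eps n$, the remaining clusters have equal size $m$, and all but an $\eps$-fraction of pairs are $\eps$-regular. Let $R$ be the reduced graph on $[t]$ whose edges correspond to $\eps$-regular pairs of density at least $\alpha/4$; then $\delta(R) \geq (1/2 + \alpha/2)t$, so $R$ contains a Hamilton cycle $C = V_1 V_2 \cdots V_t V_1$ by Dirac's theorem. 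Moving a few vertices from each cluster into $V_0$, we may further assume that each consecutive pair $(V_i, V_{i+1})$ along $C$ is super-regular of density at least $\alpha/8$.

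Next, I would construct a \emph{balanced homomorphism} $\phi \colon V(T) \to V(C)$: a map sending adjacent vertices of $T$ to adjacent clusters of $C$, and satisfying $|\phi^{-1}(V_i)| = |V_i|$ for every $i$. To produce $\phi$, root $T$ at a suitable vertex and traverse it in DFS order; as the traversal descends along an edge, advance the current cluster one step along~$C$ in a direction consistent with the bipartition of $T$, and as it ascends, retreat one step. This yields a closed walk on $C$ whose local speed can be tuned so that the preimage sizes match the $|V_i|$ on average. Since $\Delta(T) \leq \Delta$ and $|V(T)| = n$, the tree $T$ necessarily contains either many leaves or a long bare path, and either feature supplies enough local flexibility to correct residual imbalances.

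With $\phi$ in hand, apply the Blow-up Lemma of \Komlos, \Sarkozy\ and~\Szemeredi\ to the super-regular cycle of pairs: since $\Delta(T)$ is bounded, the lemma yields an injective embedding $\psi \colon V(T) \to V(G) \setminus V_0$ realizing $\phi$. Finally, the few vertices of $V_0$ are absorbed by reserving in advance a small collection of leaves of $T$; each vertex of $V_0$ has at least $(1/2 + \alpha)n - \bigoh(\eps n)$ neighbours in the interior clusters, so a Hall-type matching argument lets us re-route these reserved leaves to cover $V_0$ exactly.

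The main obstacle is the second step: constructing a homomorphism whose preimage sizes \emph{exactly} match the cluster sizes $|V_i|$. While the Dirac condition and the freedom of DFS traversal provide ample room on average, achieving exact counts demands careful accounting, and is where the bounded-degree hypothesis really earns its keep. The cleanest route is to first build a homomorphism with preimage sizes approximately $|V_i|$, and then correct the imbalance by local switching along bare paths and leaves of $T$; this is the technical heart of the argument, and the price one pays for using the blow-up lemma as a black box rather than embedding vertices of $T$ one-by-one.
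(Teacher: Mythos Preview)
The paper does not prove Theorem~\ref{t:KSS}; it is quoted from \Komlos, \Sarkozy\ and~\Szemeredi~\cite{komlos95:_proof_bollob} as background, and the paper's own contribution is the directed analogue, Theorem~\ref{t:KSS-analogue}. So there is no proof in the paper to compare against, though the paper's proof of the directed version does share your broad architecture: regularity, a Hamilton cycle in the reduced digraph, the leaves-versus-bare-paths dichotomy, and a final matching step.

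Your sketch nonetheless has two concrete gaps. First, an arithmetic inconsistency: you ask for a homomorphism $\phi$ with $|\phi^{-1}(V_i)| = |V_i|$ for each $i\in[t]$, but $\sum_{i\in[t]} |V_i| = n - |V_0| < n = |T|$, so no such $\phi$ can exist, and hence the blow-up lemma cannot embed all of $T$ into $V(G)\setminus V_0$. The standard fix (used both in the original KSS proof and in the paper's directed argument) is to first remove a small set $L\subseteq V(T)$, build a balanced allocation of $T-L$ to the clusters, embed $T-L$, and only then reinsert $L$ to cover $V_0$ together with the leftover cluster vertices. Your final paragraph gestures at this, but as written the embedding and absorption steps are incompatible. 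Second, your absorption of $V_0$ relies on reserving \emph{leaves} of $T$, yet $T$ may have as few as two (a Hamilton path). You invoke the many-leaves/long-bare-path dichotomy for the balancing step, but the absorption step needs it too: when leaves are scarce, one must instead route interior segments of bare paths through $V_0$. This is precisely the case split the paper makes in Lemmas~\ref{l:many-paths} and~\ref{l:many-leaves} for the directed theorem, and the undirected KSS proof requires the same bifurcation.
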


Koml\'os, S\'ark\"ozy and~Szemer\'edi later strengthened
Theorem~\ref{t:KSS}, replacing the constant bound~$\Delta$ by
$cn/\log n$, where $c$ is some constant depending
on~$\alpha$~\cite{komlos01:spanning_trees_dense}. Many variations and
extensions of Theorem~\ref{t:KSS} have been investigated,
e.g.,~\cite{balogh2011local,BHKMPP18,BST09:bandwidth,clemens2015building,KrivKwanSud17:trees_rand_pert}.
We prove the following directed graph (digraph) analogue of Theorem~\ref{t:KSS},
where minimum degree is replaced by minimum
semidegree~$\delta^0(\cdot)$ (the minimum of in- and outdegrees over
all vertices) and the maximum degree is replaced by the maximum total
degree~$\Delta(\cdot)$ (maximum degree in the underlying tree).

\begin{theorem}\label{t:KSS-analogue}
  For all~$\Delta\in\bbn$ and $\alpha > 0$ there exists $n_0$ such
  that every digraph~$G$ of order $n\ge n_0$ with
  $\delta^0(G)\ge \bigl(\frac{1}{2}+\alpha\bigr)n$ contains every oriented tree
  $T$ of order~$n$ with~$\Delta(T)\le \Delta$.
\end{theorem}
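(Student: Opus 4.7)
The plan is to adapt the regularity-plus-absorption strategy used for Theorem~\ref{t:KSS} to the directed setting. First apply a directed version of \Szemeredi's regularity lemma to $G$, producing clusters $V_1,\dots,V_k$ of equal size plus a small exceptional set, such that most ordered pairs $(V_i,V_j)$ induce $\eps$-regular bipartite digraphs in both orientations. The reduced digraph $R$, keeping only the dense regular ordered pairs, inherits a minimum semidegree bound $\delta^0(R)\ge(\tfrac{1}{2}+\tfrac{\alpha}{2})k$.

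Next I would locate a useful spanning structure in $R$: the bound $\delta^0(R)>k/2$ allows, via a directed Ghouila-Houri-type argument, a Hamilton cycle $C$, and in fact a ``thickened'' cyclic arrangement of clusters with regular bipartite links in both orientations between consecutive clusters. This is the skeleton along which $T$ will be embedded.

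Split $T$ into a large subforest $T'$ and a small absorbing subforest $T_A$ containing $\mu n$ leaves of $T$ (with $\mu\ll\alpha$). Reserve an absorbing set $A\subseteq V(G)$ by a random-reservation argument, so that for any small leftover set $X\subseteq V(G)\setminus A$ of the appropriate size, $A\cup X$ admits an embedding of $T_A$ agreeing with the already-fixed attachment points. Embed $T'$ into $G\setminus A$ using the directed blow-up lemma, traversing $T'$ in a breadth-first order and placing successive vertices into consecutive clusters along $C$; the bounded degree of $T$ and the regularity (in both directions) between consecutive clusters guarantee many valid image choices at each step, for whichever orientation the tree demands. Finally, absorb the leftover vertices via $A$ and $T_A$.

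The main obstacle is the combinatorial complexity introduced by orientations: at every internal vertex of $T$, one must simultaneously control the in- and out-neighbourhoods of the candidate image, and $T_A$ must tolerate every orientation pattern that can occur near its attachment points (up to $2^\Delta$ local types per leaf). Designing $T_A$ and the absorber $A$ so that this local flexibility holds uniformly across all such types is where most of the technical work lies.
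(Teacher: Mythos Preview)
There are two genuine gaps.

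First, your allocation scheme cannot work as stated. You propose to traverse $T'$ in a breadth-first order, placing successive vertices into consecutive clusters along a directed Hamilton cycle $C\subseteq R$, relying on ``regular bipartite links in both orientations between consecutive clusters.'' But the semidegree hypothesis $\delta^0(R)>(\tfrac12+\tfrac\alpha2)k$ gives a \emph{directed} Hamilton cycle; there is no reason the reverse arcs $i+1\to i$ should correspond to dense regular pairs. When a tree edge is oriented against the cyclic direction, your embedding step has nowhere to go. (Consider an anti-directed path.) The paper confronts exactly this problem and resolves it by allocating not along the Hamilton cycle but along a spanning regular \emph{expander} $J\subseteq R$, so that at every cluster both an in- and an out-neighbour are always available; the Hamilton cycle is used only at the very end, for the final matching.

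Second, your absorption step requires a subforest $T_A$ containing $\mu n$ leaves of $T$. But a bounded-degree tree need not have linearly many leaves: an oriented Hamilton path has exactly two. So the splitting $T=T'\cup T_A$ is impossible in general, and your absorber cannot be built. The paper handles this by a dichotomy (Lemma~\ref{l:bare-paths}): if $T$ has few leaves then it has linearly many vertex-disjoint bare paths of order~$7$, and these bare paths play the role your leaves were meant to play. The two cases are treated by separate lemmas (Lemma~\ref{l:many-leaves} and Lemma~\ref{l:many-paths}), each completing the embedding via perfect matchings in super-regular pairs rather than by an absorber. Your outline covers, at best, the many-leaves case; the many-bare-paths case, which includes all path-like trees, is missing entirely.
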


In fact we prove a stronger result (Theorem~\ref{t:treelike}), which allows the
embedding of a large class of treelike graphs, and which implies
both Theorem~\ref{t:KSS-analogue} and the more general embedding
result for trees below.
For this we define a bare path $P=p_1p_2\cdots p_n$ in
a (di)graph $G$ to be a path whose internal vertices
$p_2,\ldots,p_{n-1}$ each have degree~$2$ in (the underlying graph of)~$G$.

\begin{theorem}\label{t:many-paths-or-leaves}
  Suppose $\cramped{\frac{1}{n}}\ll\lambda\ll\alpha$. If $G$
  is a digraph of order~$n$ with~$\delta^0(G)\geq (1/2+\alpha)n$, then
  $G$~contains every oriented tree $T$
  of order~$n$ with~$\Delta(T)\leq \exp(\sqrt{\log n})$ such that
   $T$~contains either
  \begin{enumerate}
  \item at least $\lambda n$ pairwise vertex-disjoint bare paths of order~$7$, or
  \item at least $\lambda n$ pairwise disjoint edges incident to leaves.\label{i:mpl/leaves}
  \end{enumerate}
\end{theorem}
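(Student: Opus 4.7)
My plan is to follow the regularity--absorption framework, adapted to the oriented setting. First, I would apply a directed version of Szemer\'edi's regularity lemma to~$G$ with parameters depending on~$\alpha$ and~$\lambda$, obtaining a partition of~$V(G)$ into clusters $V_1,\dots,V_r$ and a reduced digraph~$R$ of minimum semidegree at least $(1/2+\alpha/2)r$. The semidegree hypothesis lifts to~$R$ and endows it with a rich oriented structure: in particular, $R$~contains a directed Hamilton cycle and is sufficiently well-connected that every bounded-degree oriented tree admits a balanced homomorphism into it.

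The embedding then proceeds in three stages. First, reserve inside~$G$ an absorbing subdigraph~$A$ of order~$\littleo(n)$ with the property that, for any sufficiently small set~$U$ of leftover vertices with appropriate cluster distribution, $V(A)\cup U$ supports an embedding of a designated fragment of~$T$; the construction of~$A$ exploits the hypothesized abundance of order-$7$ bare paths in case~(i), or of leaf-edges in case~(ii), each functioning as a flexible gadget. Second, let~$T'$ be the skeleton obtained from~$T$ by contracting the reserved bare paths to single edges (case~(i)) or deleting the reserved leaves (case~(ii)); find an orientation-preserving homomorphism $\phi\colon T'\to R$ under which each cluster receives roughly $|V_i|$ preimages, and lift~$\phi$ to an embedding of~$T'$ into $G\setminus V(A)$ by the standard greedy procedure within regular pairs, keeping in each cluster a small reservoir of ``good'' vertices whose in- and out-neighbourhoods into the neighbouring clusters remain large. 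Third, re-insert the bare paths or leaves, using their orientational freedom to rebalance the cluster residues so that only a tiny set~$U$ of vertices is left unused; finally, apply~$A$ to swallow~$U$ and complete the spanning copy of~$T$.

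The main obstacle, as I see it, has two facets. First, the super-constant degree bound $\Delta(T)\le n^{(\Cnew\log n)^{-1/2}}$ forces a much more delicate embedding procedure than in the classical bounded-degree case, and unlike in the undirected setting every edge of~$T$ carries a prescribed orientation that must be matched in~$R$; hence a directed homomorphism-and-embedding lemma strong enough to handle both requirements must be developed. Second, the rebalancing step relies on only finitely many orientations of a length-$6$ bare path (or leaf-edge), so one must show that every such orientation can be routed between any ordered pair of clusters of~$R$ while providing fine control over how many vertices are consumed in each cluster, and the absorber~$A$ must be compatible with all such orientations simultaneously. Extracting this much directed flexibility from pieces as short as a $7$-vertex path is, I expect, where the bulk of the technical work lies.
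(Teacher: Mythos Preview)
Your framework shares the regularity-plus-homomorphism skeleton with the paper, but the completion strategy differs and the allocation step is where the real gap lies. The paper does not build an absorber. Instead it constructs an \emph{exact} allocation $\varphi\colon T\to R^\star$ in which each non-exceptional cluster receives precisely $(n-|V_0|)/k$ vertices, the centres of one designated family~$\mathcal{P}^0$ of bare paths are sent bijectively onto the exceptional set~$V_0$, and the middle sections of another family~$\mathcal{P}^H$ are routed along a super-regular Hamilton cycle in~$R^\star$. A modified greedy embedding then places everything except the centres of paths in~$\mathcal{P}^H$, and those are filled in by perfect matchings in super-regular pairs. There is no leftover set to absorb.

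Your sentence ``find an orientation-preserving homomorphism~$\phi$ under which each cluster receives roughly~$|V_i|$ preimages'' hides essentially the whole proof. The paper's random-walk allocation is run not on the Hamilton cycle (an arbitrarily oriented path need not mix there) but on a spanning \emph{regular expander} of~$R^\star[[k]]$, giving sublinear error. However, forcing~$\mathcal{P}^0$ to cover~$V_0$ and~$\mathcal{P}^H$ to follow the cycle introduces \emph{linear} imbalances of order~$\lambda n/\eta k$, too large to repair with only~$\lambda n$ gadgets. The cure is a \emph{biased allocation} of a second large subtree~$T_2$: one builds a weighted blow-up of~$R^\star[[k]]$ with part sizes tuned to cancel the linear error, finds a regular expander inside it, and runs the random walk there, reducing the total imbalance to~$n^{1-o(1)}$. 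Only then is a third device---re-routing the middles of a reserved family~$\mathcal{P}^\diamond$ along a $P_{\mathrm{ref}}$-diamond-connected spanning subgraph---used to zero the residue. None of these three mechanisms appears in your sketch.

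Finally, the phrase ``orientational freedom'' of the bare paths is a red flag: every edge of~$T$ has its orientation fixed in advance. The only freedom is in \emph{which clusters} a fixed-orientation path visits, and extracting enough rebalancing power from that alone is exactly what the expander, biased-blow-up, and diamond machinery is for.
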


Note that for all positive $\eps$ we have
  $\polylog(n) \asymp \exp\bigl(\Theta(\log \log n)\bigr) \lesssim \exp(\sqrt{\bigoh(\log n)}) \lesssim n^\eps$.
Our main result allows embedding families of sparse graphs which arise from an
arbitrary small graph by numerous applications of the following operations:
\begin{enumerate}[label=(\Alph*)]
\item \label{i:add-leaf}
  append a leaf (i.e., add a new vertex connected to the graph by
  a single edge);
\item \label{i:split-edge}
  subdivide an edge (i.e., replace some edge $uv$ by a path $uxv$,
  where $x$ is a new vertex).
\end{enumerate}
Throughout the text, vertices of degree one are called \defi{leaves} (even in graphs other than trees),
and $|G|$ denotes the order of the graph~$G$.

\begin{theorem}\label{t:treelike}
  Suppose~$\cramped{\frac{1}{n}}\ll\lambda\ll\alpha$. Fix a graph~$\Qunder$
  and let $Q$ be a graph of order~$n$ obtained from~$\Qunder$ by
  a~sequence of operations \ref{i:add-leaf}~and~\ref{i:split-edge}
  in which each edge of~$\Qunder$ is subdivided at least once.
  Suppose additionally that~$|\Qunder| \le n^{0.99}$
  and $\Delta(Q) \le \exp(\sqrt{\log n})$, and let $G$ be a
  digraph with~$\delta^0(G)\geq (1/2 + \alpha)|G|$.
  \begin{enumerate}[label=(\arabic*)]
  \item \label{i:treelike-quasi-spanning}
    If $|G|\ge (1+\alpha)n$, then $G$ contains every orientation of~$Q$.
  \item \label{i:treelike-spanning} If $|G|=n$ and $Q$ contains
    either $\lambda n$ pairwise vertex-disjoint bare paths of order $7$ or
    $\lambda n$ pairwise disjoint edges incident to leaves, then $G$
    contains every orientation of~$Q$.
  \end{enumerate}
\end{theorem}

Theorem~\ref{t:treelike} can be used to embed a wide range of
spanning treelike subdigraphs in a digraph of high minimum
semidegree. For example, it implies that every digraph of order~$n$
with minimum semidegree at least~$n/2+\littleo(n)$ contains every
orientation of a Hamilton cycle. This gives an asymptotic version of
recent results by DeBiasio and~Molla~\cite{debiasio15:antidir_Ham}
and by~DeBiasio, K\"uhn, Molla,
Osthus and~Taylor~\cite{debiasio2015arbitrary}, which can be stated
jointly as the following theorem (the statement for directed cycles
had previously been obtained by~Ghouila-Houri~\cite{Ghouila60}).

\begin{theorem}\label{t:ham-cycle-n/2+1}\label{t:ham-cycle-n/2}\cite{debiasio2015arbitrary,debiasio15:antidir_Ham}
There exists $n_0\in\bbn$ such that the following holds for every
digraph~$G$ of order~$n\ge n_0$.
\begin{enumerate}
\item If $\delta^0(G)\ge n/2 +1$, then $G$ contains every orientation of a Hamilton cycle.

\item If $\delta^0(G)\ge n/2$, then $G$ contains every
  orientation of a Hamilton cycle, except perhaps for the
  anti-directed orientation in which each vertex has either no
  inneighbours or no outneighbours.
\end{enumerate}
\end{theorem}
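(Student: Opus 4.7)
The plan is to combine the asymptotic embedding furnished by Theorem~\ref{t:tree-like} with a stability-plus-absorbing argument for the narrow window in which the semidegree is only marginally above~$n/2$. Fix an orientation $C$ of a Hamilton cycle on $n$ vertices. I may assume $C$ is not the consistently directed cycle, since that case is already \GhouilaHouri{}'s theorem~\cite{Ghouila60}; for part~(2) I additionally assume $C$ is not the anti-directed orientation, as this is the excluded case.

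To dispose of the non-extremal regime, fix a small $\alpha>0$. If $\delta^0(G)\ge(1/2+\alpha)n$, I apply Theorem~\ref{t:tree-like}\ref{i:tree-like-spanning} with $\Qunder=C_3$: every cycle of order $n\ge 6$ is obtained from the triangle by subdividing each edge at least once, and $C_n$ contains $\Theta(n)$ pairwise vertex-disjoint bare paths of order~$7$, so the hypotheses hold and $G$ contains the prescribed orientation of $C_n$. It remains to handle the narrow range $n/2\le\delta^0(G)\le(1/2+\alpha)n$.

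Here I would deploy a stability-plus-absorbing strategy. Either $G$ is $\alpha$-close to one of the two natural extremal structures --- namely the disjoint union of two tournaments on $n/2$ vertices with few crossing arcs, or a near-bipartite digraph whose arcs prefer to cross between two halves --- or $G$ avoids both, in which case an absorbing method applies. In the latter case, I build a short oriented path $A\subseteq G$ isomorphic to an initial segment of $C$ with the property that any sufficiently small $X\subseteq V(G)\sm V(A)$ can be spliced into $A$ to extend it to the corresponding longer initial segment of~$C$; each individual absorber is a constant-size gadget whose existence follows from the high semidegree via a counting argument, and $A$ is assembled from many such gadgets by a random selection. Fixing $A$, I cover all but a small set $X$ of vertices by an almost-spanning oriented path in $G-V(A)$ using regularity together with oriented-path connecting lemmas, and close up by absorbing $X$ through~$A$.

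In each extremal case I would fix a partition $V(G)=V_1\dcup V_2$ with $|V_i|\approx n/2$ and route $C$ by a careful accounting of arcs of each of the four types (forward or backward, crossing or inside). The main obstacle lies in the anti-directed case of part~(1): the natural absorbing gadget suffers a parity obstruction that fails precisely when $n$ is even and $\delta^0(G)=n/2$, and the extremal example realising this obstruction is the balanced bipartite tournament. The strict bound $\delta^0(G)\ge n/2+1$ is what breaks the obstruction, since the extra unit of semidegree forces at least one internal arc on each side, providing the flexibility needed to close the anti-directed cycle. Handling this carefully --- both in constructing an absorber that exploits the $+1$ slack and in completing the extremal analysis for the balanced bipartite case --- is the most delicate part of the argument.
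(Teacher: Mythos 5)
This theorem is not proved in the paper at all: it is quoted from~\cite{debiasio2015arbitrary,debiasio15:antidir_Ham}, and the paper explicitly says that its own Theorem~\ref{t:tree-like} gives only ``an asymptotic version'' of it, i.e.\ the statement with minimum semidegree $n/2+\littleo(n)$ in place of the exact thresholds $n/2+1$ and $n/2$. So there is no ``paper's proof'' to compare against; the present paper's machinery stops at semidegree $(1/2+\alpha)n$ for a fixed $\alpha>0$ and cannot, on its own, reach the tight window.

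Your proposal recognises this correctly and sets up the standard template --- resolve the non-extremal regime by the asymptotic theorem, and handle the remainder by a stability/absorbing argument together with an explicit extremal case analysis. That is indeed the shape of the arguments in~\cite{debiasio2015arbitrary,debiasio15:antidir_Ham}. But the stability analysis, the definition and counting of absorbers, the connecting lemmas for almost-spanning oriented paths, and the treatment of the two extremal structures (the near-union-of-two-tournaments and the near-bipartite case, including the parity obstruction you flag for the anti-directed cycle) are precisely the content of those two papers, and your sketch only names these steps without supplying them. In particular, ``each individual absorber is a constant-size gadget whose existence follows via a counting argument'' conceals what is actually the most delicate construction: for a general orientation of $C_n$ the local pattern of the cycle changes from vertex to vertex, so the absorber must be built relative to the specific arc-pattern at the insertion point, not a single universal gadget. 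Similarly, the extremal case analysis is far from routine; showing that a digraph close to one of the two extremal structures still contains every admissible orientation of the Hamilton cycle requires a careful arc-type accounting, as you note but do not carry out. As written the proposal is an accurate roadmap, not a proof, and it cannot be completed using only the tools in this paper.
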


In the same way we can embed every orientation of a disjoint union of $O(n^{0.99})$~cycles, a result which may
be of independent interest.

\begin{corollary}\label{cor:cycles}
  For all $\alpha > 0$ there exists $n_0$ such that the following
  holds for every digraph~$G$ of order~$n\ge n_0$
  with~$\delta^0(G)\ge (1/2+\alpha)n$. If $H$ is a graph of order at
  most~$n$ consisting of at most~$\frac{1}{4}n^{0.99}$ pairwise vertex-disjoint
  cycles, then~$G$ contains every orientation of~$H$.
\end{corollary}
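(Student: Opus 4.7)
The plan is to reduce Corollary~\ref{cor:cycles} to Theorem~\ref{t:tree-like} after separately dealing with the short cycles of~$H$. Given $\alpha>0$, take constants satisfying $1/n_0 \ll 1/K \ll \zeta \ll \lambda \ll \alpha$ (as supplied by Theorem~\ref{t:tree-like} with, say, $\alpha/8$ in place of~$\alpha$), and then take $c>0$ sufficiently small in terms of $\zeta$ and~$\alpha$.

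Partition~$H$ into the union $H_s$ of its cycles of length at most~$12$ and the union $H_\ell$ of its cycles of length at least~$13$; let $k_\ell$ be the number of cycles in~$H_\ell$, so that $k_\ell \le k \le c n^{1/4}$ and $|H_s| \le 12 c n^{1/4}$. Let $H'$ be the given orientation of~$H$, and write $H_s'$, $H_\ell'$ for the induced orientations of $H_s$ and~$H_\ell$. First embed $H_s'$ in~$G$ greedily: for each oriented cycle $C' = v_1 v_2 \cdots v_\ell v_1$ in~$H_s'$ (so $\ell \le 12$), pick $v_1$ arbitrarily from the unused vertices of~$G$; for $i = 2, \ldots, \ell-1$ pick $v_i$ as an in- or out-neighbour of $v_{i-1}$ (in the direction prescribed by~$C'$) which has not yet been used; and finally pick $v_\ell$ to be an unused vertex in the appropriate intersection $N^{\pm}(v_{\ell-1}) \cap N^{\pm}(v_1)$ needed to close the cycle. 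The semidegree condition gives at least $2\delta^0(G) - n - |H| \ge 2 \alpha n - |H|$ candidates at every step (this being the bottleneck at the closing step), so the procedure succeeds whenever $|H| < 2\alpha n$. In particular, if $|H| \le \alpha n / 2$ then we may embed the whole of~$H'$ in~$G$ greedily and are done.

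Otherwise $|H| > \alpha n / 2$, whence $|H_\ell| \ge \alpha n/2 - 12 c n^{1/4} \ge \alpha n/3$ for~$n$ large. Let $G_1$ be the digraph obtained from~$G$ by deleting the at most $12 c n^{1/4}$ vertices used in embedding~$H_s'$; then $|G_1| = n - |H_s|$ and $\delta^0(G_1) \ge (1/2 + 3\alpha/4) |G_1|$. To apply Theorem~\ref{t:tree-like} to~$G_1$ and~$H_\ell'$, take $Q_0$ to be a disjoint union of $k_\ell$ triangles and, for each cycle $C_{\ell_i}$ of~$H_\ell$ (where $\ell_i \ge 13$), subdivide one triangle using $\ell_i - 3 \ge 10$ subdivisions distributed so each of its three edges receives at least one subdivision; the resulting $Q$ equals $H_\ell$. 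Then $\Delta(Q) = 2$ (so the polylogarithmic bound on $\Delta(Q)$ holds trivially) and $|Q_0| \Delta(Q) = 6 k_\ell \le 6 c n^{1/4} \le \zeta |H_\ell|^{1/4}$ for $c$ small enough (using $|H_\ell| \ge \alpha n/3$). If $|G_1| \ge (1+\alpha/4) |H_\ell|$, apply part~\ref{i:tree-like-quasi-spanning} of Theorem~\ref{t:tree-like} to~$G_1$ with $Q = H_\ell$ (and $|H_\ell|$ in place of~$n$). Otherwise delete $|G_1| - |H_\ell| < (\alpha/4) |H_\ell|$ arbitrary vertices from~$G_1$ to form a digraph $G_2$ of order~$|H_\ell|$ with $\delta^0(G_2) \ge (1/2 + \alpha/8) |G_2|$; noting that each cycle of~$H_\ell$ has length at least~$13$, the number of vertex-disjoint bare paths of order~$7$ in~$H_\ell$ is at least $\sum_i \lfloor \ell_i/7 \rfloor \ge |H_\ell|/7 - k_\ell \ge \lambda |H_\ell|$, so part~\ref{i:tree-like-spanning} of Theorem~\ref{t:tree-like} applies.

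The genuine obstacle is that cycles of length $3$, $4$ or $5$ in~$H$ cannot be produced as a subdivision of a simple graph~$Q_0$ in which every edge has been subdivided at least once, so Theorem~\ref{t:tree-like} cannot be applied to~$H$ as a whole; the initial greedy embedding of the very short cycles is the natural workaround, comfortable because these cycles together contain only $O(n^{1/4})$ vertices. The remaining arithmetic---matching orders between~$G_1$ and~$Q$, tracking semidegrees after small vertex deletions, and checking the bare-path count in~$H_\ell$---is routine given the semidegree surplus~$\alpha$.
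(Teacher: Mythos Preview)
Your proof is correct and follows the same approach as the paper: greedily embed the short cycles first, then apply Theorem~\ref{t:tree-like} to the remaining long cycles viewed as subdivisions of disjoint triangles. The paper uses the tighter threshold~$5$ rather than~$12$ (since a cycle of length~$\ge 6$ is already a triangle with every edge subdivided at least once) and invokes only part~\ref{i:tree-like-spanning}; your additional case analysis---dispatching $|H|\le\alpha n/2$ entirely by the greedy argument, and switching between parts~\ref{i:tree-like-quasi-spanning} and~\ref{i:tree-like-spanning} according to whether $|G_1|\ge(1+\alpha/4)|H_\ell|$---is more careful about the possibility $|H|<n$, which the paper's terser argument glosses over.
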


\begin{proof}
  Cycles of length at most~$5$ cover at most~$\frac{5}{4}n^{0.99}<\alpha n/2$ vertices
  of~$H$, so we may embed all such cycles greedily, whereupon the
  subdigraph~$G'$ induced by the $n'$ uncovered vertices satisfies
  $\delta^0(G)\ge (1/2 + \alpha/2)n'$. The remaining cycles
  each have length at least~$6$ and so are
  subdivisions of triangles where each edge is subdivided at least
  once. We may therefore apply Theorem~\ref{t:treelike}\,\ref{i:treelike-spanning}
  with~$\alpha/2$ and~$n'$ in place of~$\alpha$ and~$n$ respectively to embed these
  cycles in~$G'$, completing the embedding of~$H$ in~$G$.
\end{proof}

We also consider embeddings of random trees.
Moon~\cite{moon70:trees} showed that a uniformly-random labelled
$n$-vertex tree~$T$ has sub-polylogarithmic maximum degree with high probability. It is not difficult to check that with high probability~$T$
also satisfies condition~\ref{i:mpl/leaves} of Theorem~\ref{t:many-paths-or-leaves} (see,
e.g.,~\cite{MyNa18}). Together these observations imply the following
corollary, for which we denote by $\calt_n$ the set of oriented trees with
vertex set~$[n]$.

\begin{corollary}\label{cor:typical-trees}
  Fix $\alpha >0$. If $T$ is chosen uniformly at
  random from~$\calt_n$, then
  with high probability we have~$T\subseteq G$ for every
  digraph $G$ of order~$n$ with~$\delta^0(G)\ge (1/2 + \alpha)n$.
\end{corollary}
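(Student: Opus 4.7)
The plan is to derive the corollary as a direct application of Theorem~\ref{t:many-paths-or-leaves}\,\ref{i:mpl/leaves}, by showing that a uniformly random tree~$T\in\calt_n$ asymptotically almost surely satisfies the hypotheses of that theorem. Given~$\alpha>0$, I fix constants~$\Cnew$ and~$\lambda$ as supplied by Theorem~\ref{t:many-paths-or-leaves} for this value of~$\alpha$, and aim to verify for large~$n$ that a.a.s.\ (a)~$\Delta(T)\le n^{(\Cnew\log n)^{-1/2}}$, and (b)~$T$ contains at least $\lambda n$ vertex-disjoint edges each incident to a leaf.

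For~(a), \Moon's theorem~\cite{moon70:trees} gives $\Delta(T)=O(\log n/\log\log n)$ a.a.s., and since $n^{(\Cnew\log n)^{-1/2}}=\exp\bigl(\sqrt{(\log n)/\Cnew}\bigr)$ eventually exceeds any polylogarithmic function of~$n$, this is enough. For~(b), I observe that finding $\lambda n$ vertex-disjoint leaf-edges is equivalent to finding $\lambda n$ distinct vertices of~$T$ that each have at least one leaf neighbour, since one can then select one leaf-neighbour per such vertex to form the desired matching. The required bound follows from standard facts about uniformly random labelled trees: the degree sequence is asymptotically Poisson$(1)$, so the number of leaves is a.a.s.\ $(1+\littleo(1))n/\ee$, while a routine first-moment computation bounds the number of leaves lying in ``clusters''  (pairs of leaves sharing a common parent) by a sufficiently small constant multiple of~$n$. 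Deleting one leaf from each such cluster leaves $\Omega(n)$ distinct leaf-parents, and picking one leaf per parent gives the required matching. An argument of this flavour appears in~\cite{MyNa18}.

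With both~(a) and~(b) verified a.a.s., Theorem~\ref{t:many-paths-or-leaves}\,\ref{i:mpl/leaves} applied to~$T$ with the chosen constants immediately gives $T\subseteq G$ for every digraph~$G$ on~$n$ vertices with $\delta^0(G)\ge(1/2+\alpha)n$, completing the proof. The only step that goes beyond a direct application of the embedding theorem is the verification of~(b); this is the main (but still routine) combinatorial content, and everything else is an immediate consequence of Theorem~\ref{t:many-paths-or-leaves} once the parameters have been set.
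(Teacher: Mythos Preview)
Your proposal is correct and follows essentially the same approach as the paper: both verify that a uniformly random labelled tree a.a.s.\ has polylogarithmic maximum degree (via Moon~\cite{moon70:trees}) and a.a.s.\ contains $\lambda n$ vertex-disjoint leaf-edges (the paper simply cites~\cite{MyNa18} for this, while you sketch the argument), and then invoke Theorem~\ref{t:many-paths-or-leaves}\,\ref{i:mpl/leaves}. Your added detail on~(b) is fine, though note the paper treats the corollary as an immediate consequence and does not give a standalone proof.
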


While this manuscript was under review, Kathapurkar and Montgomery~\cite{kathapurkar2022spanning} announced a stronger version of Theorem~\ref{t:KSS-analogue}, in which the constant bound on the maximum degree $\Delta(T)$ is replaced by a best-possible bound of $\Delta(T) \leq cn/\log n$. This impressive breakthrough uses very different methods to those used in this paper, which cannot hope to succeed for trees with maximum degree even close to that size. However, the methods Kathapurkar and Montgomery used appear quite specific to trees, whereas our approach allows a much wider class of ``treelike graphs'' to be handled similarly, as in Theorem~\ref{t:treelike} and its applications. For this reason we believe the methods and results of this paper should still be of widespread use and interest even in the light of this new advance.

In the next section of this paper we outline the key ideas used to prove our main results. Section~\ref{s:prelim} then introduces the main definitions and results that we combine to give the full proofs in Section~\ref{s:treelike}.

\section{Key proof ideas}
\label{s:outline}
Very broadly speaking, we use the following approach to prove Theorem~\ref{t:treelike}, where our aim is to embed a treelike oriented graph $Q$ into a directed graph $G$ with high minimum semidegree. First, we \emph{allocate} each vertex of $Q$ to a cluster of a reduced graph $R$ of $G$ obtained by an application of the Szemer\'edi regularity lemma for digraphs; this allocation should respect the directions of edges in $G$ and $Q$ by having the property that if $x$ is an outneighbour of $y$ in $Q$, then the cluster $V_x$ to which $x$ is allocated should be an outneighbour in $R$ of the cluster $V_y$ to which $y$ is allocated (in other words, the allocation of vertices to clusters should be a homomorphism from $Q$ to $R$). Having done this, we then \emph{embed} each vertex of $Q$ within the cluster to which it was allocated so as to form a copy of $Q$ in $G$. This two-step process of first allocating vertices to clusters of the reduced graph, then embedding within the clusters, was previously used by K\"uhn, Mycroft and Osthus~\cite{KMO11:sumner_approximate} and subsequently further developed by Mycroft and Naia~\cite{MyNa18}, in both cases to embed trees within tournaments. For our present application a more significant development of this approach is required, with many new ideas, to reflect the setting of a digraph of high minimum semidegree and the fact that the graph to be embedded may no longer be a tree (in particular, the previous works relied heavily on the fact that each cluster within a tournament induced a subtournament, giving far greater freedom to embed vertices within a cluster; this is no longer possible in our more general setting as each cluster could be an independent set).

\subsection{Sketch proof for non-spanning trees}
Consider the case of Theorem~\ref{t:treelike}(1) in which $Q$ is a tree on $n$ vertices and $G$ is a directed graph on $(1+\alpha)n$ vertices with $\delta^0(G) \geq (1/2 +\alpha)|G|$. Let $T$ be an orientation of~$Q$; our goal is then to embed $T$ within $G$. In this case we may proceed as follows.
\begin{enumerate}
    \item Apply the regularity lemma to partition $V(G)$ into clusters $V_1, \dots, V_k$ of equal size, as well as a small set of exceptional vertices $V_0$ which can safely be ignored, so that almost all pairs of clusters form regular pairs in each direction. In particular each cluster has size $|V_i| \geq (1+\alpha/2)n/k$. Form a reduced graph $R^\star$ with vertex set $[k]$ in which $i \to j$ is an edge if and only if the graph $G[V_i \to V_j]$ of edges directed from $V_i$ to $V_j$ is regular and dense. The reduced graph $R^\star$ then inherits an analogous minimum semidegree condition from $G$, namely that $\delta^0(R^\star) \geq (1/2+\alpha/2)k$. The full definitions and details for this step are presented in a more general form in Section~\ref{s:regularity}.
    \item The minimum semidegree of $R^\star$ is enough to ensure that $R^\star$ contains a spanning subgraph $R$ which is a $d$-regular expander, meaning that every proper nonempty subset $S \subseteq V(R)$ has $|N^+(S)|,\, |N^-(S)| > |S|$ and every vertex $i \in V(R)$ has both indegree and outdegree of precisely $d$. A more general version of this statement, along with a key mixing property of expander digraphs, is presented in Section~\ref{s:regular-expander}.
    \item {\bf Allocation:} We now allocate the vertices of $T$ to clusters using a randomised allocation algorithm, which builds a homomorphism $\varphi: T \to R$ one vertex at a time. First, choose a root $r$ of $T$ and define $\varphi(r) \in [k]$ arbitrarily. Next iterate the following step: choose a vertex $x \in V(T)$ for which $\varphi(x)$ has been defined, say $\varphi(x) = u_x$, but for which $\varphi(y)$ has not been defined for any child $y$ of $x$. Choose vertices $u^+_x \in N^+_R(u_x)$ and $u^-_x \in N^-_R(u_x)$ uniformly at random, and set $\varphi(y) = u^+_x$ for every child $y$ of $x$ which is an outneighbour of $x$, and $\varphi(y) = u^-_x$ for every child $y$ of $x$ which is an inneighbour of $x$. Proceed in this manner until $\varphi(x)$ is defined for every vertex of $T$, and observe that the resulting map $\varphi$ is then a homomorphism $\varphi: T \to R$. Moreover, and crucially, so long as the maximum degree $\Delta(T)$ of $T$ is not too large, the mixing property of expander digraphs ensures that with high probability $\varphi$ allocates the vertices of $T$ approximately uniformly to the clusters of $R$, with, say, for each $i \in [k]$ at most $(1+\alpha/3)n/k$ vertices $x \in V(T)$ having $\varphi(x) = i$.
    The proof of this statement, for a more general version of this algorithm, is presented in Section~\ref{s:gen-allocation-algo}.
    \item {\bf Embedding:} Finally we form a copy of $T$ in $G$ by greedily embedding each vertex $x$ of $T$ within the cluster $V_{\varphi(x)}$ of $G$ to which it was allocated, starting with the root $r$. Each time we embed a vertex $x$ in a cluster $V_{u_x}$ we reserve sets of size $\bigoh(\sqrt{n})$ in the clusters $V_{u^-_x}$ and $V_{u^+_x}$ in which the children of $x$ which are respectively inneighbours and outneighbours of $x$ will be embedded; no other vertices may be embedded within these sets until all children of $x$ have been embedded. In this way we avoid ``treading on our toes'' by occupying all of the inneighbours or outneighbours of $x$ whilst they are still required for children of $x$; moreover the fact that edges of $R$ correspond to dense regular pairs in $G$ is sufficient for us to successfully choose appropriate embeddings within the specified sets. For this process to succeed we need the order in which we proceed through the vertices of $T$ to have the property that at any time there are not too many vertices which have been embedded but have a child vertex yet to be embedded; the notion of a \emph{tidy ancestral order} presented in Section~\ref{s:trees} captures what we need for this, whilst the way in which we choose the reserved sets is explained in Section~\ref{s:homomorphisms}.
\end{enumerate}
A bound on $\Delta(T)$ is crucial for this argument to succeed, since we need the randomised allocation algorithm to distribute the vertices of $T$ approximately uniformly among the $k$ clusters (by contrast, the vertices of a star would be allocated in a highly-unbalanced way). To achieve this, we specifically need the distance between almost all pairs of vertices of $T$ to be significantly larger than $k$, and indeed the bound on $\Delta(T)$ given in Theorem~\ref{t:treelike} is chosen to achieve precisely this property (see Sections~\ref{s:regular-expander} and~\ref{s:gen-allocation-algo} for more details).

\subsection{Sketch proof for spanning trees}
Let us now consider the case of Theorem~\ref{t:treelike}\,\ref{i:treelike-spanning} in which $Q$ is a tree on $n$ vertices and $G$ is a directed graph on $n$ vertices with $\delta^0(G) \geq (1/2 +\alpha)n$. Again, let $T$ be an orientation of $Q$. The difference with the previous case is that our aim is now a spanning embedding of $T$ in $G$; there is no `room to spare'. However, Theorem~\ref{t:treelike}\,\ref{i:treelike-spanning} does provide a linear number of either bare paths on seven vertices or edges incident to leaves; for the sake of this discussion we assume the former (the arguments for the latter are quite similar and somewhat simpler).

We begin by splitting $T$ into two not-too-small subtrees $T_1$ and $T_2$ with one vertex $v$ in common, which we take as the root of both trees. Without loss of generality $T_1$ contains a set $\calp$ of bare paths on seven vertices such that $\calp$ has small linear size; and moreover we may insist that all paths in $\calp$ have the same `pattern' (the sequence of directions of edges along the path). As before we apply the regularity lemma to obtain clusters $V_1, \dots, V_k$ of equal size and a small set $V_0$ of bad vertices, and similarly as there we may define a reduced graph $R^\star$ with vertex set $[k]$ whose vertices correspond to clusters, and also obtain a subgraph $R$ of~$R^\star$ on vertex set $[k]$ which is a $d$-regular expander. However, we now additionally insist that $R$ contains a Hamilton cycle $H$ (this is possible by the minimum degree condition that $R$ inherits from $G$); without loss of generality we assume that $H$ has edges $1 \to 2 \to 3 \to \dots \to 1$. We also insist that for each edge $i \to j$ of $H$ the pair $G[i \to j]$ is dense and superregular; this can be achieved by deleting a small number of vertices from each cluster and adding them to $V_0$.

Next we apply a modified version of the allocation algorithm presented above to $T_1$: the change is that if $x$ is a non-initial vertex of a bare path in $\calp$, and $y$ is the parent of $x$, then we instead set $\varphi(x)$ to be $\varphi(y)+1$ if $y \in N^+(x)$ and $\varphi(y)-1$ if $y \in N^-(x)$. The effect is that internal edges of bare paths in $\calp$ are allocated deterministically within the cycle $H$ rather than randomly within the regular expander $R$. Despite this change, the allocation algorithm still allocates the vertices of $T_1$ approximately uniformly across $[k]$ with high probability; moreover, the initial vertices of bare paths in $\calp$ are also allocated approximately uniformly across $[k]$ with high probability.

We now choose pairwise disjoint sets $\calp^H, \calp^0, \calp^\diamond \subseteq \calp$ such that $|\calp^0| = |V^0|$, whilst $\calp^H$ and $\calp^\diamond$ each have small linear size. Each of these collections of bare paths will play a distinct role in completing the embedding of $T$ in $G$, as follows.
\begin{enumerate}
    \item We reallocate the internal vertices of paths $P \in \calp^0$ in such a way that the middle vertex of each path $P$ can be embedded to a corresponding vertex of $V_0$. Note that, since in the allocation step we are forming a homomorphism $\varphi: V(T) \to R^\star$, for notational convenience we actually also include each bad vertex in $V_0$ as a vertex of $R^\star$, so $V(R^\star) = [k] \cup V_0$, and for each vertex $v \in V_0$ our allocation will have $\varphi(x) = v$ for precisely one vertex $x$, which will then be embedded to $v$ at the embedding step ($x$ is the middle vertex of a path $P \in \calp^0$). In this way we ensure that all vertices of $V_0$ are covered by our embedding of $T$.
    \item We reallocate the internal vertices of paths $P \in \calp^\diamond$ to structures in $R$ called `diamonds'; the purpose of which is that for each such path $P$ we have two choices for the allocation of the middle vertex $v_4^P$ of $P$, where each choice is consistent with the allocation of the neighbours $v_3^P$ and $v_5^P$ of $v_4^P$ in $P$. This gives us the flexibility to reallocate the middle vertex $v_4^P$ of each path $P \in \calp^\diamond$ after all other vertices have been allocated, so as to slightly adjust the number of vertices allocated to each cluster. (Details of diamonds and this reallocation process can be found in Section~\ref{s:diamonds}.)
    \item Finally, we leave the allocation of paths in $\calp^H$ unchanged; the internal vertices of these paths will be the vertices we embed at the very end to `finish off' our spanning embedding of $T$ in $G$.
\end{enumerate}
Unfortunately, it is unavoidable that the reallocation of vertices in paths in $\calp^0$ and $\calp^\diamond$ may create greater imbalances in the allocation of vertices of $T_1$ to clusters than can be corrected by reallocating the middle vertices of paths in $\calp^\diamond$\!. This is the purpose of the tree $T_2$: after we have applied the randomised allocation algorithm to $T_1$, and subsequently reallocated vertices of paths in $\calp^0$ and $\calp^\diamond$\!, we now apply a biased version of the allocation algorithm to $T_2$, which with high probability allocates slightly more vertices to the clusters which were underoccupied by $T_1$, and slightly fewer vertices to the clusters which were overoccupied by $T_1$. The end result is that the overall allocation of vertices of $T$ to clusters is very close to uniform; specifically it is close enough that it can be made uniform by reallocating the middle vertices of paths in $\calp^\diamond$\!. That is, after this reallocation exactly the same number of vertices are allocated to each cluster, meaning that the allocation `fits' perfectly.

Finally, we embed the vertices of $T$ in $G$ by the same embedding approach as described previously, with two modifications. Firstly, we take special care with the embeddings of paths $P \in \calp^0$, whose middle vertices~$v_4^P$ must be embedded to the vertex $v \in V_0$ to which they were allocated; to do this we embed the remaining vertices of $P$ so that the neighbours $v_3^P$ and $v_5^P$ of $v_4^P$ are embedded to inneighbours/outneighbours (as appropriate) of the vertices in $V_0$. Secondly, we do not embed the internal vertices of paths $P \in \calp^H$ at this stage, but leave these until the very end. This ensures that there is a little `room to spare' when embedding all vertices of $T$ other than those in paths in $\calp^H$, so the previous approach of reserving sets for the children of each vertex as it is embedded is still valid. Finally, to complete the embedding we need to embed the internal vertices of the paths in $\calp^H$ to the vertices still unoccupied within each cluster. Since each path in $\calp^H$ is allocated along a sequence of superregular pairs (corresponding to edges of $H$), this can be done by applying the four-layer theorem of Koml\'os, S\'ark\"ozy and Szemer\'edi (see Section~\ref{s:regularity}).

\subsection{Sketch of the full proof}
For the full version of the lemma, the graph $Q$ that we seek to embed may not be a tree. However the conditions on $Q$ do ensure that we can find a small `ground set' $\Vground \subseteq V(Q)$ so that $\Vground$ contains all vertices of $Q_0$ as well as all short paths between vertices of $Q_0$. In particular, $Q \setminus \Vground$ is a forest $F$. We embed the vertices of $\Vground$ greedily in $G$, and then proceed similarly as in the previous case, dividing our forest $F$ into two subforests $F_1$ and $F_2$ and allocating and embedding these similarly as we described for $T_1$ and $T_2$. Some additional difficulties arise here in doing this: for example, components of $F$ may contain two vertices with neighbours in $\Vground$, and whilst one can serve as the root of the component, the other (which we call a secondary attachment) must be handled carefully to ensure that it is allocated and embedded appropriately given the embedding of its neighbour in $\Vground$. However, these difficulties can be overcome through careful modification to the allocation and embedding procedures (in~particular, this is the purpose of the set $Z$ of bad vertices in the general allocation algorithm in Section~\ref{s:gen-allocation-algo}).

\section{Auxiliary concepts and results}\label{s:prelim}

The following concepts and results play an important role in our proofs.
We follow standard graph-theoretical notation (see,
e.g.,~\cite{diestel97:_graph}). For clarity, we define some of
our notation (mostly related to digraphs) below. More specific
terms are defined in later sections.

A \defi{directed graph} $G$, or \defi{digraph} for short, is a
pair~$\bigl(V(G),E(G)\bigr)$ of sets: a vertex set~$V(G)$ and an edge set~$E(G)$, where each
edge $e\in E(G)$ is an ordered pair of distinct vertices.
The \defi{order} of~$G$, denoted $v(G)$ or~$|G|$, is defined to be $|V(G)|$
and the \defi{size} of~$G$ is~$e(G)=|E(G)|$. We think of the
edge~$(u, v)$ as being directed from~$u$ to~$v$, and write~$x\farc y$
or~$y\barc x$ to denote the edge~$(x, y)$; if the orientation of the
edge does not matter, we write~$\{u,v\}$ (or~$\{v,u\}$) instead. In
either case, $u$~and~$v$ are said to be the~\defi{endvertices}
of~$\{u,v\}$, and we also call $u$ (respectively~$v$) a
\defi{neighbour} of $v$ (respectively~$u$).

In a digraph~$G$, the \defi{outneighbourhood}~$N^+_G(x)$ of
a~vertex~$x$ is the set~$\{\,y : x\farc y\in E(G)\,\}$;
the~\defi{inneighbourhood}~$N^-_G(x)$ of~$x$
is~$\{\,y : x\barc y\in E(G)\,\}$. The~\defi{outdegree}
and~\defi{indegree} of~$x$ in~$G$ are
respectively~$\deg^+_G(x) \deq \bigl|N^+_G(x)\bigr|$
and~$\deg^-_G(x) \deq \bigl|N^-_G(x)\bigr|$, and
the~\defi{semidegree}~$\deg^0_G(x)$ of~$x$ is the minimum of the
outdegree and indegree of~$x$. We say that $G$ is \defi{$r$-regular}
if for all $x\in V(G)$ we have $\deg^-(x)=\deg^+(x)=r$. The \defi{minimum
  semidegree}~$\delta^0(G)$ of~$G$ is the minimum of~$\deg^0_G(x)$
over all~$x\in V(G)$. For any subset~$Y \subseteq V(G)$, we
write~$\deg_G^-(x,Y)$ for~$|N_G^-(x)\cap Y|$, the \defi{indegree
  of~$x$ in~$Y$}; the \defi{outdegree of~$x$ in~$Y$}, denoted
by~$\deg_G^+(x,Y)$, is~defined similarly. The \defi{semidegree of~$x$
  in~$Y$}, denoted by~$\deg_G^0(x,Y)$, is the~minimum of~those
two~values. We drop the subscript when there is no danger of
confusion, writing~$N^-(x)$,~$\deg^0(x)$, and so forth.
We sometimes symbols such as $\bullet$ or~$\diamond$
as placeholder for either $-$ or~$+$, as in ``if $x\in N^\bullet(y)$
and $y\in N^\bullet(x)$, then $x$ and~$y$ form a cycle of length~$2$'',
in which both occurrences of $\bullet$ are meant to be either $-$ or~$+$.

For
digraphs~$G$ and~$H$, we call~$H$ a \defi{subgraph} of $G$
if~$V(H) \subseteq V(G)$ and~$E(H) \subseteq E(G)$; $H$ is said to
be~\defi{spanning} if~$V(H)=V(G)$. For any set~$X \subseteq V(G)$, we
write~$G[X]$ for the subgraph of~$G$ \defi{induced} by~$X$, which has
vertex set~$X$ and whose edges are all edges of~$G$ with both
endvertices in~$X$. If~$H$ is a subgraph of~$G$ then we write~$G-H$
for~$G\bigl[V(G)\setminus V(H)\bigr]$. Likewise, for a vertex~$v$ or
set of vertices~$S$, we write~$G - v$ or~$G - S$
for~$G\bigl[V(G)\setminus \{v\}\bigr]$
or~$G\bigl[V(G)\setminus S\bigr]$ respectively.
 For disjoint subsets~$X, Y \subseteq V(G)$, where $G$~is
a~digraph, we denote by~$G[X \rarr Y]$, or equivalently
by~\mbox{$G[Y\larr X]$}, the subdigraph of~$G$ with vertex
set~$X \cup Y$ and edge set
$E\bigl(G[X \rarr Y]\bigr) \deq \{\,x\rarr y \in E(G) : x\in X, y\in
Y\,\}$.

An \defi{oriented graph} is a digraph in which
there is at most one edge between each pair of vertices.  Equivalently, an
oriented graph~$G$ can be formed by assigning an orientation to each
edge $\{u,v\}$ of some (undirected) graph~$H$, i.e. by~replacing each
$\{u,v\}\in E(H)$ by one of the possible ordered pairs $(u,v)$
or~$(v,u)$. In this case we refer to~$H$ as the \defi{underlying
  graph} of~$G$, and say that~$G$ is an \defi{orientation} of~$H$. We
refer to the~\defi{maximum degree} of an oriented graph~$G$,
denoted~$\Delta(G)$, to mean the maximum degree of the underlying
graph~$H$.

A~\defi{directed path} of length~$k$ is an oriented graph with vertices~$v_0,\ldots, v_k$ and edges~$v_\imm\rarr v_i$ for
each~$1 \leq i \leq k$.
Likewise, a~\defi{directed cycle} of
length~$k$ is an oriented graph with vertices~$v_1,\ldots,v_k$ and
edges $v_i\rarr v_\ipp$ for each~$1 \leq i \leq k$ with addition taken
modulo~$k$.

A~\defi{tree} is an acyclic connected graph, and an~\defi{oriented
tree} is an orientation of a tree. Where it is clear from
the context that a tree is oriented, we may refer to it simply as a
tree. A~\defi{leaf} in a graph or oriented graph $G$
is a vertex $v\in V(G)$ which is incident to precisely one edge; $v$
is an~\defi{in-leaf} if~$\deg_T^+(v)=1$ and an \defi{out-leaf}
otherwise (note that these definitions are only standard when $G$ is a tree, but we define
leaves, in-leaves and out-leaves for general graphs and digraphs as they are helpful concepts for
working with treelike structures also).
A~\defi{star} is a tree in which at most one~vertex
(the~\defi{centre}) is not a leaf. A \defi{subtree}~$T'$ of a tree~$T$
is a subgraph of~$T$ which is also a tree, and we define subtrees of
oriented trees similarly. A \defi{forest} is a graph in which each component is a tree,
and a \defi{oriented forest} is an orientation of a forest, that is,
a digraph in which each component is an oriented tree.

Given an integer~$d$, a graph $G$ is said to be \defi{$d$-degenerate}
if $\delta(H)\le d$ for every subgraph $H\subseteq G$.
Note that a graph is $1$-degenerate if and only if it is a forest.

If $H$ is an induced subgraph of a graph $G$, then we call $v\in V(H)$
an \defi{attachment} of $H$ if $v$ has a neighbour in~$V(G)\setminus V(H)$, and call $v$ a~\defi{pendant vertex} otherwise.
Attachments and pendant vertices in digraphs are defined similarly, considering the underlying graph.

Let~$A_1, A_2, \dots$ be a sequence of events. We say that~$A_n$ holds
\defi{with high probability} if~\makebox{$\bbp(A_n) \to 1$}
as~\makebox{$n \to \infty$}. Likewise, all occurrences of the standard
asymptotic notation~$\littleo(f)$ refer to sequences~$f(n)$ with
parameter~$n$ as~$n \to \infty$ (i.e., $g=\littleo(f)$ if
$g(n)/f(n)\to 0$ as $n\to\infty$). We will often have sets indexed
by~$\{1,2,\ldots,k\}$, such as~$V_1, \dots, V_k$, and addition of
indices will always be~performed modulo~$k$. Also,
if~$\varphi\colon A\to B$ is a function from~$A$ to~$B$
and~$A'\subseteq A$, then we write~$\varphi(A')$ for the image of~$A'$
under~$\varphi$. We omit floors and ceilings whenever they do not affect
the argument, write~$a = b \pm c$ to indicate
that~$b-c\leq a\leq b+c$, and write $abc/de\!f$ for the fraction
$(abc)/(de\!f)$. We write $\bbn\deq\{1,2,\ldots\}$
for the set of natural numbers. For each~$k \in \bbn$ we denote by~$[k]$ the
set~$\{1,2,\ldots,k\}$, and write~$\binom{S}{k}$ to denote the set of
all~$k$-element subsets of a set~$S$. For any two disjoint sets
$A$~and~$B$, we write $A\dcup B$ for their union. We use the
notation~$x\ll y$ to indicate that for every positive~$y$ there exists
a positive number~$x_0$ such that for every~$0<x<x_0$ the subsequent
statements hold. Such statements with more variables are defined
similarly. We always write~$\log x$ to mean the natural logarithm
of~$x$.

\subsection{Trees and forests}\label{s:trees}

Let~$T$ be a tree or oriented tree.
It is often helpful to nominate a
vertex~$r$ of~$T$ as the~\defi{root} of~$T$; to emphasise this fact we
sometimes refer to~$T$ as a~\defi{rooted tree}. If so, then every
vertex~$x$ other than~$r$ has a~unique~\defi{parent}; this is defined
to be the (sole) neighbour~$p$ of~$x$ in the unique path in~$T$
from~$x$ to~$r$, and~$x$ is said to be a~\defi{child}
of~$p$. In the same way we call $w$ a \defi{descendant} of $v$,
and $v$ an \defi{ancestor} of $w$, if the path from $r$ to $w$
contains $v$. We denote the set of children of $v$ by~\defi{$C(v)$},
and similarly write \defi{$C^-(v)$} for~$N^-(v)\cap C(v)$ and
\defi{$C^+(v)$} for~$N^+(v)\cap C(v)$.

Now let $F$ be a forest or oriented forest, so each component of $F$
is a tree or oriented tree. If we choose a root vertex for each component tree,
then the definitions of parent, child, descendant and ancestor extend naturally to forests:
we say that $u$ is a child (respectively parent, descendant or ancestor) of $v$ in $F$
if $u$ and $v$ are in the same component $T$ of $F$ and $u$ is a child
(respectively parent, descendant or ancestor) of $v$ in $T$.

An~\defi{ancestral order} of the vertices of a rooted tree~$T$
is an order of~$V(T)$ in which every non-root vertex appears later than its parent;
observe that this implies that the roof of $T$ appears first in the order, and that each vertex
appears prior to each of its descendants and later than each of its ancestors. In exactly the
same way, given a forest $F$ and a root vertex for each component tree in $F$,
an~\defi{ancestral order} of the vertices of $F$ is an order of~$V(F)$ in which every non-root vertex
appears later than its parent. Note that an ancestral order of the vertices of a
tree (or forest) specifies the root(s) of the tree (or forest), since the root of each
component must appear prior to every other vertex of that component. Consequently, when considering
trees or forests equipped with ancestral orders we do not separately specify the roots of the tree or tree components.

We say that an ancestral order of a forest $F$ is \defi{tidy} if for any initial segment $\cali$ of the order,
at most $\log_2 |V(F)|$ vertices in $\cali$ have a child not in
$\cali$. Such orders were considered by K\"uhn, Mycroft and Osthus~\cite{KMO11:sumner_approximate}
for the purpose of embedding trees in tournaments; in particular they proved the following lemma for trees (the statement for forests follows immediately).

\begin{lemma}\label{l:tidy}
  \cite[Lemma 2.11]{KMO11:sumner_approximate} If $F$ is a forest in which each component tree is rooted, then $F$
  admits a tidy ancestral order.
\end{lemma}

Our proof strategy at various stages requires us to divide a tree into subtrees, which we capture by the following notion.

\begin{definition}\label{d:tree-partition}
  Let~$T$ be a tree or oriented tree. A~\defi{tree-partition} of~$T$
  is a collection~$\{T_1,\ldots,T_s\}$ of pairwise edge-disjoint subtrees
  of~$T$ such
  that~$\bigcup_{i\in[s]}V(T_i) =
    V(T)$ and~$\bigcup_{i\in[s]}E(T_i) = E(T)$.
\end{definition}

Note that distinct trees in a tree-partition~\calp\ share at most
one vertex; moreover, if~\calp\ contains at least 2~trees,
then each tree has at least one vertex in common with some other tree
in~\calp. The following lemma shows that any tree admits a tree-partition into two subtrees
which splits a given set of vertices somewhat evenly.

\begin{lemma}{{\cite[Lemma 5.7]{MyNa18}}}\label{l:split-tree}
  If~$T$ is a (possibly oriented) tree and~$L\subseteq V(T)$, then $T$ admits
  a~tree-partition~$\{T_1, T_2\}$ such that~$T_1$ and~$T_2$
  each contain at~least~$|L|/3$ vertices of~$L$.
\end{lemma}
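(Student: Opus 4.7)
The plan is to root $T$ at an arbitrary vertex $r$ and then cleave $T$ at a carefully chosen vertex $v$, distributing the edges incident to $v$ between the two pieces so that the resulting subtrees each inherit at least $|L|/3$ vertices of $L$. Writing $T_v$ for the subtree of $T$ rooted at $v$ and $\ell(v)\deq|L\cap V(T_v)|$, so that $\ell(r)=|L|$, I would walk down from $r$ by iteratively moving to a child $c$ of the current vertex whenever $\ell(c)>2|L|/3$, halting at the first vertex $v$ no child of which exceeds this threshold; since $T$ is finite the walk terminates, and by construction $\ell(v)\ge 2|L|/3$ (either $v=r$, or the last step landed on $v$), while every child $c$ of $v$ satisfies $\ell(c)\le 2|L|/3$.

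Two cases then follow. In \emph{Case~(a)}, in which some child $c^*$ of $v$ has $\ell(c^*)\ge|L|/3$, I take $T_1$ to consist of $T_{c^*}$ together with $v$ and the edge $vc^*$, and set $T_2\deq T-V(T_{c^*})$; these share only $v$, and contain $\ell(c^*)+\bbone_{v\in L}$ and $|L|-\ell(c^*)$ vertices of $L$ respectively, both at least $|L|/3$. In \emph{Case~(b)}, in which every child $c$ of $v$ satisfies $\ell(c)<|L|/3$, the identity $\sum_c\ell(c)=\ell(v)-\bbone_{v\in L}\ge 2|L|/3-1$ shows (for $|L|\ge 3$) that greedily adding children to a set $C$ one at a time until $\sum_{c\in C}\ell(c)$ first reaches $|L|/3$ yields a sum in $[|L|/3,\,2|L|/3)$, since each added summand is below $|L|/3$. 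Then $T_1$, consisting of $v$ together with the subtrees rooted at $C$ and the edges $vc$ for $c\in C$, and its complement $T_2\deq T-\bigcup_{c\in C}V(T_c)$ (sharing only $v$), form a tree-partition with $|L\cap T_1|,|L\cap T_2|\ge|L|/3$.

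The main technical wrinkle is handling the small cases $|L|\le 2$, where Case~(b) may fail to amass enough $L$-mass: for $|L|=0$ any tree-partition works; for $|L|=1$, taking $T_1\deq\{u\}$ and $T_2\deq T$, where $u$ is the unique vertex of $L$, ensures both parts contain $u$; and for $|L|=2$, the walk necessarily halts at a vertex $v$ with $\ell(v)=2$, so some child $c$ must satisfy $\ell(c)\ge 1\ge|L|/3$, triggering Case~(a). Beyond this bookkeeping, the argument is routine.
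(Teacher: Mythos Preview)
Your proof is correct. The paper does not actually prove this lemma---it explicitly omits the proof as ``a straightforward exercise'' and cites an earlier paper---so there is no argument to compare against; your rooted-walk-and-greedy-split approach is precisely the standard one for this kind of statement.
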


We will use the following straightforward corollary to obtain a small set of vertices which splits a forest into not-too-large parts.

\begin{corollary} \label{c:treepieces}
Every forest $F$ on $n$ vertices admits a set $X \subseteq V(F)$ with $|X| \leq 3n^{1/3}$ for which every component of $F - X$ has fewer than $n^{2/3}$ vertices.
\end{corollary}

\begin{proof}
Let $\calt_0$ be the set of components of $F$,
so each $T \in \calt_0$ is a subtree of $F$\!, and let $s \deq |\calt_0|$.
Also let $X_0 = \emptyset$. For each $i \geq 0$ in turn do the following:
if $\calt_{i}$ contains a tree $T$ with $|T - X_i| \geq n^{2/3}$,
then apply Lemma~\ref{l:split-tree} to obtain a~tree-partition~$\{T', T''\}$ of~$T$
for which~$T'$ and~$T''$ each contain at~least~$|T - X_i|/3 \geq n^{2/3}/3$ vertices of $V(T) \setminus X_i$,
and let $v_i$ be the unique vertex in $V(T') \cap V(T'')$.
Set $\calt_{i+1} \deq (\calt_{i} \setminus \{T\}) \cup \{T', T''\}$,
set $X_{i+1} \deq X_{i} \cup \{v_i\}$, and proceed to the next $i$.
If instead $\calt_i$ does not contain a tree $T$ with $|T - X_i| \geq n^{2/3}$ then we terminate
the process, writing $t$ for this terminal value of $i$, and setting $X \deq X_t$.
So $\calt_t$ does not contain a tree $T$ with $|T - X| \geq n^{2/3}$;
since each component of $F - X$ is a subtree of some tree in $\calt_t$
it follows that each component of $F - X$ has fewer than $n^{2/3}$ vertices.
Observe that our definition of $\calt_{i+1}$ and $X_{i+1}$ ensures
that $|\calt_{i+1}| = |\calt_{i}| + 1$ and $|X_{i+1}| = |X_i|+1$ for each $i \in [t]$, so by induction we have $|\calt_{t}| = t+s$ and $|X| = t$. Now consider a tree $T \in \calt_t$. Either $T$ was one of the $s$ trees in $\calt_0$ or $T \in \calt_{i+1} \setminus \calt_i$ for some $i$. In the latter case, the way we chose $T'$ and $T''$ at each step implies that $|T - X_i| \geq n^{2/3}/3$. It follows that $|T - X| \geq n^{2/3}/3 - 1$, since all vertices of $V(T) \cap X_i$ other than $v_i$ are in $V(T) \cap X$ (this is because trees in $\calt_t$ only intersect at vertices in $X$, and $T$ was not subsequently split since $T \in \calt_{t}$). We conclude that all but at most $s$ trees $T$ in $\calt_t$ have $|T - X| \geq n^{2/3}/3 - 1$, so $\calt_t$ contains at least $t = |X|$ trees with this property.
Since each vertex of $V(F) \setminus X$ is in exactly one tree in $\calt_t$, it follows that $|X| (n^{2/3}/3 - 1) \leq |V(F) \setminus X| = n - |X|$, so $|X| \leq 3n^{1/3}$, as required.
\end{proof}

The following proposition will be used at the very start of the proof of Theorem~\ref{t:treelike} to obtain the set $\Vground$; the stated properties will simplify the allocation and embedding process for the tree components that remain after this set is deleted. Note that in the proof we use the folklore result that every tree $T$ contains a vertex $x$ such that each component of $T - x$ has at most $|T|/2$ vertices; this can be proved by orienting each edge of $T$ in the direction that leads to the most vertices (choosing arbitrarily if tied) and taking $x$ to be a sink of the resulting orientation (which must exist since $T$ contains no cycle).

\begin{proposition}\label{p:treeset}
If $T$ is a tree or oriented tree, and $X \subseteq V(T)$, then there exists a set $Y \subseteq V(T)$ with the following properties.
\begin{enumerate}
    \item $X \subseteq Y$,
    \item $1 \leq |Y| \leq \max(6|X|, 1)$,
    \item each component of $T- Y$ contains at most $|T|/2$ vertices, and
    \item \label{p:treeset-iv} each component $T'$ of $T - Y$ has either one or two attachments (i.e. vertices with neighbours in $Y$), each of which has only one neighbour in $Y$. Moreover, if $T'$ has two attachments then these are not adjacent.
\end{enumerate}
\end{proposition}

\begin{proof}
Choose a vertex $x_0 \in V(T)$ such that every component of $T - x_0$ has at most $|T|/2$ vertices. If $|X| = 0$ then $Y := \{x_0\}$ has the desired properties, so assume that $|X| \geq 1$.
Write $X = \{x_1, \dots, x_t\}$ and for each $i \in [t]$ let $P_i$ be the unique path in $T$ from $x_0$ to $x_i$. Then $P^* = \bigcup_{i \in [t]} P_i$ is a subtree of $T$. Let $A$ be the set of vertices of $P^*$ with degree at least three in $P^*$; since $P^*$ has at most $t+1$ leaves we have $|A| \leq t-1$. Now let $\calp$ be the set of all paths $P$ in $P^*$ for which $P$ has length at most three and for which the vertices of $P$ which are in the set $\{x_0\} \cup X \cup A$ are precisely the endvertices of $P$. So in particular $|\calp| \leq \bigl|\{x_0\} \cup X \cup A\bigr|$. Let $B$ be the set of all vertices in paths in $\calp$ which are not in $\{x_0\} \cup X \cup A$, so $|B| \leq 2 |\calp| \leq 2\bigl|\{x_0\} \cup X \cup A\bigr| \leq 4t$. Finally, set $Y = \{x_0\} \cup X \cup A \cup B$, so $|Y| \leq 6t = 6 |X|$ and $X \subseteq Y$; observe also that $Y \subseteq V(P^*)$.

Since $x_0 \in Y$ each component of $T-Y$ contains at most $|T|/2$ vertices. It therefore remains only to verify~\ref{p:treeset-iv}, so let $T'$ be a component of $T - Y$. If $T'\!$ contains no vertices of $V(P^*)$, then there is a unique vertex of $T'$ with a neighbour in $Y$, and that neighbour is also unique. So we may assume that $T'$ contains a vertex of $V(P^*)$, and therefore that $V(T') \cap V(P^*)$ induces a path $P$ in $P^*$ with at least 3 vertices (since vertices in shorter paths were in $B$ so cannot be in $T'$). Note that each vertex of $P$ has degree two in $P^*$ (since higher degree vertices were in $A$ so cannot be in $T'$). It follows that the two endvertices of $P$\,---\,which are not adjacent\,---\,are the only vertices of $T'$ with a neighbour in $Y$, and moreover that each of these endvertices has only one neighbour in $Y$, as required.
\end{proof}

Our final result on trees is a simple proposition which enables us to assume without loss of generality that the oriented graph $Q$ that we wish to embed has at most one component which is a tree.

\begin{proposition}\label{p:addedges}
Fix $\Delta \geq 4$. Let $F$ be a forest with maximum degree $\Delta(F) \leq \Delta$ which contains either at least $p$ pairwise vertex-disjoint bare paths of order 7 or at least $p$ pairwise disjoint edges incident to leaves. Then we may add edges to $F$ so that the resulting graph is a tree $T^*$ with the same properties.
\end{proposition}

\begin{proof}
Let $\calt$ be the set of components of $F$\!, so each $T \in \calt$ is a tree. We form $T^*$ by adding to $F$ the edges of a path which meets each $T \in \calt$ in a single vertex $v_T$; note that we then have $\deg_{T^*}(v_T) \leq \deg_F(v_T) + 2$ for each $T \in \calt$ and $\deg_{T^*}(v) = \deg_F(v)$ for every vertex $v \notin \{v^T : T \in \calt\}$. Suppose first that $F$ contains a set $\calp$ of at least $p$ pairwise vertex-disjoint bare paths of order 7, and for each $T \in \calt$ choose $v_T$ to be a leaf of $T$ (or the unique vertex of $T$ if $|T| = 1$). So each $v_T$ has $\deg_F(v_T) \leq 1$, so $\Delta(T^*) \leq \Delta$, and $\calp$ is a set of bare paths of order 7 in $T^*$, as required. Now suppose instead that $F$ contains a set $\cale$ of at least $p$ pairwise disjoint edges incident to leaves, and consider each $T \in \calt$. If $T$ contains a vertex which is adjacent to at least two leaves of~$T$, then at least one of these leaves is not in an edge of $\cale$, and we choose this leaf as $v_T$. On the other hand, if every vertex of $T$ is adjacent to at most one leaf of $T$ and $|T| \geq 2$, then let $x_1$ and $x_2$ be the first and second vertices respectively of a longest path $P$ in $T$ and set $v_T = x_2$. Since $x_1$ is the only neighbour of $x_2$ which is a leaf, the extremality of $P$ implies that $x_2$ has no neighbours outside $P$. Finally, if $|T| = 1$ then let $v_T$ be the unique vertex of $T$. In each case we have $\deg_F(v_T) \leq 2$ and so $\Delta(T^*) \leq \Delta$. Moreover, $\cale$ is a set of pairwise disjoint edges incident to leaves in $T^*$, since no leaf in an edge in $\cale$ was selected for $v_T$.
\end{proof}

\subsection{Estimates and bounds}
\label{s:bounds}

We write~$\bbe(X)$ for the expectation of a random variable~$X$,
and write $\bbp(A)$ for the probability of an event~$A$.
We use the following well-known Chernoff-type bounds.

\begin{theorem}\cite[Corollary 2.3 and~Theorem~2.10]{JLR00:randomgraphs} \label{t:exp}
  If~$0<a<3/2$ and~$X$ has binomial or hypergeometric distribution,
  then~$\bbp\bigl(\,|X-\bbe(X)|\geq a\bbe(X)\,\bigr)\leq 2\exp(-a^2\bbe(X)/3)$.
\end{theorem}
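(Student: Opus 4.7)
The plan is to derive both inequalities from the Chernoff bounds in Theorem~\ref{t:chernoff} by setting $t = a\bbe X$, and then reduce the hypergeometric case to the binomial case. First I would handle the binomial case: if $X \sim \calb(n,p)$, then $\bbe X = np$, and setting $t = a\bbe X$ in~\eqref{e:chernoff-lower} yields
\[
  \bbp(X \geq \bbe X + a\bbe X) \leq \exp\left(-\frac{a^2(np)^2}{2(np + anp/3)}\right) = \exp\left(-\frac{a^2\bbe X}{2(1+a/3)}\right),
\]
which, since $0<a<3/2$ implies $2(1+a/3)<3$, is at most $\exp(-a^2\bbe X/3)$. Similarly, setting $t=a\bbe X$ in~\eqref{e:chernoff-upper} gives $\bbp(X \leq \bbe X - a\bbe X) \leq \exp(-a^2\bbe X/2) \leq \exp(-a^2\bbe X/3)$, and a union bound over the two tails yields the claimed inequality.

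For the hypergeometric case I would invoke the classical comparison result of \Hoeffding: if $X$ counts the intersection $|S\cap M|$ where $S$ is a uniformly random $k$-subset of $[n]$ and $M\subseteq[n]$ has size $m$, and $Y\sim\calb(k,m/n)$ is the binomial variable with the same mean, then for every convex function $\phi\colon\bbn\to\bbn$ one has $\bbe\phi(X)\leq\bbe\phi(Y)$. Taking $\phi(x)=\exp(\lambda x)$ shows that the moment generating function of $X$ is pointwise dominated by that of $Y$. Since the Chernoff bounds used above are obtained by applying Markov's inequality to precisely these exponential moments and then optimising in $\lambda$, the very same bounds (with the very same constants) apply to the hypergeometric $X$, and the binomial calculation above then delivers the conclusion.

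The main obstacle is essentially arithmetic rather than conceptual: one must check that the constant $3$ in the denominator of the exponent is attained uniformly over the range $0<a<3/2$, which is exactly the role played by the hypothesis on $a$ (the upper tail, with its $t/3$ correction, is the tighter of the two and forces the constraint). Beyond this verification the argument is a standard manipulation of the Chernoff bounds, with the hypergeometric case reduced to the binomial case by \Hoeffding's convex-order comparison.
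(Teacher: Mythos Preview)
The paper does not prove this theorem; it is quoted from \cite[Corollary~2.3 and Theorem~2.10]{JLR00:randomgraphs} without proof. Your argument is correct and is essentially the standard derivation given there: the binomial case follows from Theorem~\ref{t:chernoff} by the substitution $t=a\bbe X$ (the constraint $a<3/2$ being exactly what makes $2(1+a/3)<3$ in the upper-tail exponent), and the hypergeometric case follows via \Hoeffding's convex-order comparison, which bounds the exponential moments of the hypergeometric by those of the binomial with the same mean. One trivial slip: you wrote $\phi\colon\bbn\to\bbn$, but $\phi(x)=\exp(\lambda x)$ is real-valued, so the codomain should be~$\mathbb{R}$.
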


We also use a concentration result
of McDiarmid~\cite{mcdiarmid98:_concen},
in a form stated by Sudakov and Vondr\'ak~\cite{sudakov10}.

\begin{lemma}
  \label{l:martingale}\cite{mcdiarmid98:_concen,sudakov10}
  Fix~$n\in\bbn$ and let~$X_1,\ldots,X_n$ be random variables taking
  values in~$[0,1]$ such that
  \(\bbe(\,X_i\mid X_1,\ldots,X_\imm\,)\leq a_i\) for each~$i\in[n]$.
  If~$\mu\geq\sum_{i=1}^n a_i$, then for every~$\delta$
  with~$0<\delta<1$ we have
  \[
    \bbp\Bigl(\,\sum_{i=1}^n X_i > (1+\delta)\mu\,\Bigr)\leq \ee^{-\delta^2\mu/3}.
  \]
\end{lemma}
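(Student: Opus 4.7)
The plan is to apply the exponential moment method, which is the standard route to one-sided Chernoff-type bounds. First I would fix a parameter $t>0$ (to be optimised later) and apply Markov's inequality to $\exp\bigl(t\sum_i X_i\bigr)$, giving
\[
\bbp\Bigl(\sum_{i=1}^n X_i > (1+\delta)\mu\Bigr) \;\leq\; e^{-t(1+\delta)\mu}\,\bbe\Bigl[\exp\Bigl(t\sum_{i=1}^n X_i\Bigr)\Bigr].
\]
The task then reduces to controlling the moment generating function on the right, and the dependence among the $X_i$ will be dealt with through iterated conditioning.

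Next I would use the elementary convexity bound $e^{tx}\leq 1+(e^t-1)x$ valid for $x\in[0,1]$, which together with the hypothesis yields
\[
\bbe\bigl[e^{tX_i}\mid X_1,\ldots,X_{i-1}\bigr] \;\leq\; 1+(e^t-1)\,\bbe(X_i\mid X_1,\ldots,X_{i-1}) \;\leq\; 1+(e^t-1)a_i \;\leq\; \exp\bigl((e^t-1)a_i\bigr),
\]
where in the second inequality I use that $1+(e^t-1)a_i$ is monotone in $a_i$ for $t>0$. Applying the tower property iteratively from $i=n$ down to $i=1$ then gives
\[
\bbe\Bigl[\exp\Bigl(t\sum_{i=1}^n X_i\Bigr)\Bigr] \;\leq\; \exp\Bigl((e^t-1)\sum_{i=1}^n a_i\Bigr) \;\leq\; \exp\bigl((e^t-1)\mu\bigr).
\]

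To finish, I would optimise the exponent by choosing $t=\log(1+\delta)$, so that $e^t-1=\delta$, which produces the bound
\[
\bbp\Bigl(\sum_{i=1}^n X_i > (1+\delta)\mu\Bigr) \;\leq\; \exp\bigl(-\mu\bigl[(1+\delta)\log(1+\delta)-\delta\bigr]\bigr).
\]
The conclusion then follows from the classical inequality $(1+\delta)\log(1+\delta)-\delta \geq \delta^2/3$ for $0<\delta<1$, which drops out of the Taylor expansion $(1+\delta)\log(1+\delta)=\delta+\delta^2/2-\delta^3/6+\cdots$ after a short bookkeeping of the alternating remainder. I do not anticipate any genuine obstacle: the argument is entirely classical, and the only point needing a little care is confirming that the tower step uses the conditional \emph{upper} bound $a_i$ rather than an equality, which is legitimate precisely because of the monotonicity noted above.
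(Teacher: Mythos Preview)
Your argument is correct and is precisely the standard exponential-moment proof of this martingale Chernoff bound. The paper itself does not prove this lemma at all: it is stated with citations to \cite{mcdiarmid98:_concen,sudakov10} and used as a black box, so there is no ``paper's own proof'' to compare against. Your write-up would serve as a self-contained justification if one were desired.
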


\subsection{Regularity}\label{s:regularity}
As well as the strict notion
of regularity\,---\,that $G$ is $r$-regular if every vertex in $G$ has degree precisely $r$\,---\,we
will also work with an approximate notion of regularity for bipartite graphs, where a graph is~`regular' if its edges are `random-like' in the
sense that they are distributed roughly uniformly. More formally,
let~$G$ be a bipartite graph with vertex classes~$A$ and~$B$. For
any sets~$X \subseteq A$ and~$Y \subseteq B$, we write~$G[X,Y]$ for
the bipartite subgraph of~$G$ with vertex classes~$X$ and~$Y$
whose edges are the edges of~$G$ with one endvertex in each of the
sets~$X$ and~$Y,$ and define the~\defi{density}~$d_G(X,Y)$ of edges
between~$X$ and~$Y$ to be \[d_G(X,Y) \deq \frac{e\bigl(G[X,Y]\bigr)}{|X||Y|}.\]
Let~$d, \eps > 0$. We say that~$G$
is~\defi{$(d,\eps)$-regular} if for all~$X\subseteq A$ and
all~$Y\subseteq B$ such that~$|X|\geq \eps|A|$ and~$|Y|\geq \eps|B|$
we have~$d_G(X,Y) = d \pm \eps$. (To avoid confusion, note that the notion of being $(d, \eps)$-regular will always have two parameters, whereas the notion of an $r$-regular graph or digraph has only one parameter.) The following well-known result
is immediate from this definition.

\begin{lemma}[Slicing lemma]\label{l:slice-pair}
  Fix~$\alpha, \eps, d > 0$ and let~$G$ be a~$(d,\eps)$-regular
  bipartite graph with vertex classes~$A$ and~$B$.  If~$A'\subseteq A$
  and~$B'\subseteq B$ have sizes $|A'| \geq \alpha|A|$
  and~$|B'| \geq \alpha|B|$, then~$G[A',B']$
  is~$(d, \eps/\alpha)$-regular.
\end{lemma}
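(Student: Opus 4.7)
The plan is to unwind the definition of regularity and observe that any test pair $(X,Y)$ that is large relative to $A'$ and $B'$ is automatically large relative to $A$ and $B$.

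More precisely, I would fix arbitrary $X \subseteq A'$ and $Y \subseteq B'$ with $|X| \geq (\eps/\alpha)|A'|$ and $|Y| \geq (\eps/\alpha)|B'|$, and aim to show that $d_G(X,Y) = d \pm \eps$. Since $|A'| \geq \alpha |A|$ and $|B'| \geq \alpha |B|$, the inequalities
\[
|X| \geq \frac{\eps}{\alpha}|A'| \geq \frac{\eps}{\alpha}\cdot \alpha |A| = \eps |A|, \qquad |Y| \geq \frac{\eps}{\alpha}|B'| \geq \eps |B|
\]
hold. Therefore $X \subseteq A$ and $Y \subseteq B$ are sets of size at least $\eps|A|$ and $\eps|B|$ respectively, so the hypothesis that $G$ is $(d,\eps)$-regular applies directly and gives $d_G(X,Y) = d \pm \eps$. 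As $X \subseteq A'$ and $Y \subseteq B'$ were arbitrary subject to the relative size bounds, this is precisely the statement that $G[A',B']$ is $(d, \eps/\alpha)$-regular.

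There is really no obstacle here; the result is immediate once one notes that $d_G(X,Y)$ depends only on $X$ and $Y$ (not on the ambient classes), so the only thing to verify is that the density-control hypothesis transfers, which is the trivial calculation above. I would present the proof in a single short paragraph.
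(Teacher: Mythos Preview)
Your proposal is correct and matches the paper's approach: the paper simply states that the result is ``immediate from this definition'' without giving any further argument, and your unwinding of the definition is exactly what that remark amounts to.
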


We say that a bipartite graph~$G$ with vertex classes $A$ and $B$
is~\defi{$(\dplus, \eps)$-regular} if~$G$
is~$(d'\!, \eps)$-regular for some~$d' \geq d$. For small~$\eps$, if~$G$
is~$(d, \eps)$-regular then almost all vertices of~$A$ have degree
close to~$d|B|$ in~$B$ and vice-versa. We say that~$G$ is
`superregular' if no vertex has degree much lower than this. More
precisely,~$G$ is~\defi{$(d,\eps)$-superregular} if~$G$
is~$(\dplus,\eps)$-regular and also for every~$a\in A$ and~$b\in B$ we
have~$\deg(a,B) \geq (d-\eps)|B|$ and~$\deg(b,A) \geq (d-\eps)|A|$. The analogous statement to Lemma~\ref{l:slice-pair} for superregular graphs does not hold in general, since a subset of one vertex class may entirely avoid the neighbourhood of a vertex in the other vertex class. However, we can give a similar statement for uniformly-random subsets of each vertex class.

\begin{lemma}\label{l:sliceSRpair}
Suppose that $1/n \ll \eps \ll d, \beta$ and that $a,b,x,y$ are integers with $\beta n < x \leq a \leq n$ and $\beta n < y \leq b \leq n$.
Let $G$ be a $(d, \eps)$-superregular bipartite graph with vertex sets $A$ and $B$ of size $a$ and $b$.
If we choose $X \subseteq A$ with $|X| = x$ and $Y \subseteq B$ with $|Y| = y$ uniformly at random among all subsets of these sizes, then $G[X, Y]$ is $(d, \eps/\beta)$-superregular with high probability.
\end{lemma}

\begin{proof}
 For each $a \in A$ the random variable $\deg(a, Y)$ has hypergeometric distribution with expectation $\deg(a) |Y|/|B| \geq (d-\eps) |Y|$, so by Theorem~\ref{t:exp} the probability that $\deg(a, Y) < (d-\eps/\beta) |Y|$ declines exponentially with $n$. A similar argument shows that for each $b \in B$ the probability that $\deg(b, X) < (d-\eps/\beta) |X|$ also declines exponentially with $n$. Taking a union bound over these events for each of the at most $2n$ vertices of $G$ we find that, with high probability, none of these events hold. Since $G[X, Y]$ is $(\dplus, \eps/\beta)$-regular by Lemma~\ref{l:slice-pair}, we then have that $G[X, Y]$ is $(d, \eps/\beta)$-superregular.
\end{proof}

The final step in the proof of Theorem~\ref{t:treelike} is to
embed the last few vertices of $Q$ into the equally-few remaining unused vertices
of the host graph $G$. In the case where $Q$ has many pairwise disjoint edges incident to leaves,
we will achieve this by using the following well-known corollary of Hall's marriage theorem,
that every balanced superregular bipartite graph contains a perfect matching.

\begin{lemma}\label{l:matching}
  If $d\geq 2\eps$ and $G$ is a~$(d, \eps)$-superregular
  bipartite graph with equally many vertices in each vertex class, then~$G$ contains a perfect matching.
\end{lemma}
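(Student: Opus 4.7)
The plan is to verify Hall's condition and conclude via Hall's theorem. Writing $A$ and $B$ for the two vertex classes of $G$ (each of size $m$), I aim to show that $|N_G(S)| \geq |S|$ for every $S \subseteq A$; this gives the required perfect matching. The empty case is trivial, so assume $S \neq \emptyset$.

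The argument splits naturally into three cases according to $|S|$. First, if $0 < |S| < \eps m$, then super-regularity applied to any $a \in S$ gives $|N(S)| \geq \deg(a) \geq (d-\eps)m \geq \eps m > |S|$, where the penultimate inequality uses $d \geq 2\eps$ and the last is the case hypothesis. Second, if $\eps m \leq |S| \leq (1-\eps)m$, I would argue by contradiction: supposing $|N(S)| < |S|$ and setting $T \deq B \setminus N(S)$ gives $|T| > m - |S| \geq \eps m$, so both $S$ and $T$ meet the size threshold in the definition of $(\dplus,\eps)$-regularity. Regularity then forces $d_G(S,T) \geq d-\eps > 0$, contradicting the absence of edges in $G[S,T]$. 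Third, if $|S| > (1-\eps)m$, then any $b \in B \setminus N(S)$ would have $N(b) \subseteq A \setminus S$, whence $(d-\eps)m \leq \deg(b) \leq |A \setminus S| < \eps m$, contradicting $d \geq 2\eps$; hence $N(S) = B$ and so $|N(S)| = m \geq |S|$.

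These three cases together establish Hall's condition and therefore produce a perfect matching. There is no serious obstacle: the argument is the routine proof that balanced super-regular bipartite pairs admit perfect matchings. The only point demanding care is the middle case, where the complement $T$ of $N(S)$ in $B$ must be checked to be large enough to invoke the regularity definition; this is what dictates the choice of thresholds $\eps m$ and $(1-\eps)m$ in the case split. The quantitative assumptions $d \geq 2\eps$ and $\eps > 2/m$ enter only to ensure that the strict numerical inequalities across the three cases are genuinely non-vacuous.
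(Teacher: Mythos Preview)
Your proof is correct and is precisely the standard argument via Hall's theorem that the paper has in mind; indeed, the paper does not supply its own proof but simply labels the lemma as ``well-known''. The three-case split according to whether $|S|$ is small, intermediate, or large is the routine verification, and your handling of each case is sound.
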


In the case where $Q$ has many pairwise vertex-disjoint edges incident to leaves,
we instead use the following lemma of Koml\'os,
S\'ark\"ozy and Szemer\'edi~\cite{komlos95:_proof_bollob} (they actually stated the lemma only for $\ell=4$, but the statement for larger $\ell$ follows immediately by combining this with Lemma~\ref{l:matching}). For this, say that
a graph~$G$ is an \defi{$\ell$-layer $(d,\eps)$-superregular} graph
if~$V(G)=\bigcup_{i\in[\ell]} V_i$, where $V_1, \dots, V_\ell$ are
pairwise disjoint sets of equal size $\ell$ and $G[V_i,V_\ipp]$ is
$(d,\eps)$-superregular for
each $i\in[\ell-1]$.

\begin{lemma}[\cite{komlos95:_proof_bollob}, Theorem 2.1] \label{l:four-layered}
  For every integer $\ell\ge 4$ and every $d >0$ there
  exist~$\eps$ and $m_0$ such that the following holds for all
  $m\ge m_0$. Let $G$ be an $\ell$-layer $(d,\eps)$-superregular
  graph on $\ell m$~vertices, and let $\pi: V_1 \to V_\ell$ be a bijection. There then exists a set $\calp$ of $m$ pairwise vertex-disjoint paths of order $\ell$ in $G$, such that for each $v \in V_1$ there is a path in $\calp$ with ends $v$ and $\pi(v)$.
\end{lemma}

Let $X$ and $Y$ be disjoint sets of vertices in a digraph $G$.
Observe that the underlying graph of~$G[X \rarr Y]$ is then a bipartite
graph with vertex classes~$X$ and~$Y$. We say that~$G[X \rarr Y]$ is
\defi{$(d, \eps)$-regular}
(respectively \defi{$(\dplus, \eps)$-regular}
or~\defi{$(d, \eps)$-superregular}) to mean that this underlying
graph is $(d, \eps)$-regular (respectively $(\dplus, \eps)$-regular
or~$(d, \eps)$-superregular).  In this way we may apply the previous
results of this subsection to digraphs.

The celebrated Regularity Lemma of
Szemer\'edi~\cite{szemeredi75,szemeredi78:_regul} states that every
sufficiently large graph admits a partition such that almost all pairs
of parts are regular in the sense we discuss here. Alon and
Shapira~\cite{alon94} gave the following analogous result for directed graphs.

\begin{lemma}[Regularity Lemma for digraphs \cite{alon94}]\label{l:regularity}
   For all positive $\eps, K'$ there
  exist $K,\,n_0$ such that if~$G$ is a digraph of order $n\geq n_0$ and
  $d\in[0,1]$, then there exist a partition $V_0,\ldots,V_k$
  of~$V(G)$ and a spanning subgraph $G'$ of~$G$ such that
  \begin{enumerate}
  \item \label{reg1} $K'\leq k\leq K$\,;
  \item \label{reg2} $|V_1|=\cdots=|V_k|$ and $|V_0|<\eps n$\,;
  \item \label{reg3} $\deg_{G'}^+(x)\geq \deg_G^+(x)-(d+\eps)n$ for all $x\in V(G)$\,;
  \item \label{reg4} $\deg_{G'}^-(x)\geq \deg_G^-(x)-(d+\eps)n$ for all $x\in V(G)$\,;
  \item \label{reg5} for all \ink\ the digraph $G'[V_i]$ has no edges\,;
  \item \label{reg6} for all distinct $i,j$ with $1\leq i,j\leq k$, either $G'[V_i \farc V_j]$ is empty or $G'[V_i \farc V_j]$ is $(\dplus, \eps)$-regular.
  \end{enumerate}
\end{lemma}

We refer to the sets~$V_1,\ldots,V_k$ as the \defi{clusters} of~$G$.
For $d\in[0,1]$, the \defi{reduced graph} $R$ with parameters $\eps,\,d$
and~$K'$ of~$G$ is a digraph we obtain by applying
Lemma~\ref{l:regularity} to~$G$ with parameters $\eps, d$ and~$K'$;
the digraph $R$ has vertex set $[k]$ and edges $i\farc j$ precisely
when $G'[V_i\farc V_j]$ has density at least~$d$.
The following lemma will be used to obtain the reduced graph
required in the allocation stage of the proof of Theorem~\ref{t:treelike}.

\begin{lemma}\label{l:reduced-graph}
  Suppose that $\cramped{\frac{1}{n}}\ll \cramped{\frac{1}{K}} \ll \cramped{\frac{1}{K'}} \ll \eps\ll d\ll\eta\ll\alpha$. If $G$ is a
  digraph of order~$n$ with
  $\delta^0(G)\geq \bigl(\frac{1}{2}+\alpha\bigr)n$, then there exist
  an integer $k$ with $K' \leq k \leq K$, a
  partition $V_0, V_1, \cdots, V_k$ of $V(G)$ and a digraph
  \rstar\ with $V(\rstar)=V_0\dcup [k]$ with the following properties.
  \begin{enumerate}[label={(\alph*)}]
  \item\label{rg-i}
    $|V_0|<\eps n$ and $m\deq|V_1|=\cdots=|V_k|$.

  \item\label{rg-ii}

    The pairs $G[V_1\rarr V_{2}],G[V_2\rarr V_{3}],\ldots, G[V_{k-1}\rarr V_{k}],G[V_k\rarr V_1],$ are $(d,\eps)$-superregular.

    \item\label{rg-rstar-ij-edge}
    For all $i,j\in[k]$ we have $i\farc j\in E(\rstar)$ precisely when $G[V_i\rarr V_j]$ is $(\dplus,\eps)$-regular.

  \item\label{rg-rstar-V0-edge} For all $v\in V_0$ and all \ink\ we have $v\barc i\in E(\rstar)$ precisely when $\deg^-(v,V_i)\geq (1/2+\eta)m$,
    and $v\farc i\in E(\rstar)$ precisely when $\deg^+(v,V_i)\geq (1/2+\eta)m$\,.

  \item\label{rg-rstar-deg-i}
    For all \ink\
    we have $\deg_\rstar^0(i,[k])\geq (1/2+\eta)k$.
  \item\label{rg-rstar-deg-ii}
    For all $v\in V_0$ we have $\deg_\rstar^0(v,[k])>\alpha k/2$.
  \end{enumerate}
\end{lemma}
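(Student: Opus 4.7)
The plan is to apply the digraph regularity lemma, extract a directed Hamilton cycle of clusters from the reduced graph (whose minimum semidegree inherits the hypothesis on~$G$), and then locally modify the partition so that the pairs along this cycle become super-regular. The structure of~$\rstar$ is then read off by recording which pairs are $(d,\eps)$-regular and which exceptional vertices have sufficiently high degree into each cluster. Apply Lemma~\ref{l:regularity} to~$G$ with auxiliary parameters $\eps_0,d_0,M_0'$ satisfying $1/n\ll 1/M_0' \ll \eps_0\ll d_0\ll\eta$, producing an initial partition $W_0\dcup W_1\dcup\cdots\dcup W_k$ and reduced digraph~$R_0$ on~$[k]$. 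A standard edge count---summing $\deg_{G'}^+(v)\geq(1/2+\alpha)n-(d_0+\eps_0)n$ over~$v\in W_i$, and noting that non-edges of~$R_0$ correspond to density-zero pairs while $|W_0|\leq\eps_0 n$---yields $\delta^0(R_0)\geq(1/2+\alpha/2)k>k/2$. By Ghouila-Houri's theorem, $R_0$~admits a directed Hamilton cycle; relabel so this cycle is $1\farc 2\farc\cdots\farc k\farc 1$.

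Next, super-regularize along the cycle. For each~\ink, let $B_i\subseteq W_i$ consist of those~$v$ with $\deg^+(v,W_{i+1})<(d_0-\eps_0)|W_{i+1}|$ or $\deg^-(v,W_{i-1})<(d_0-\eps_0)|W_{i-1}|$; the $(d_0,\eps_0)$-regularity of both pairs gives $|B_i|\leq 2\eps_0|W_i|$. Move every~$B_i$ into the exceptional set, then trim the clusters to a common size~$m$, sending trimmed vertices also to the exceptional set, and call the resulting partition $V_0\dcup V_1\dcup\cdots\dcup V_k$. By the slicing lemma (Lemma~\ref{l:slice-pair}) and the definition of~$B_i$, each pair $G[V_i\farc V_{i+1}]$ is $(d,\eps)$-super-regular (giving~\ref{rg-ii} and the re-indexed statement~(c)), every pair which corresponded to an edge of~$R_0$ remains $(d,\eps)$-regular, and $|V_0|\leq\eps_0 n+4\eps_0 km\leq 5\eps_0 n<\eps n$. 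Defining $ij\in E(\rstar)$ to record exactly the $(d,\eps)$-regular cluster pairs, and defining the edges of~$\rstar$ incident to~$V_0$ as prescribed, then yields~\ref{rg-i},~\ref{rg-rstar-ij-edge}, and~\ref{rg-rstar-V0-edge} by construction.

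It remains to check~\ref{rg-rstar-deg-i} and~\ref{rg-rstar-deg-ii}. For the former, only $O(\eps_0 m)$ vertices have been redistributed per cluster, so the $(d,\eps)$-regular pairs incident to each~$\ink$ are essentially the edges of~$R_0$ incident to~$i$; hence $\deg_\rstar^0(i,[k])\geq(1/2+\alpha/2-\littleo(1))k\geq(1/2+\eta)k$ since $\eta\ll\alpha$. For the latter, fix~$v\in V_0$ and let $t$~be the number of~\ink\ with $\deg^+(v,V_i)\geq(1/2+\eta)m$. Since $\deg_G^+(v)\geq(1/2+\alpha)n\geq(1/2+\alpha-\eps)km$ and $\deg^+(v,V_i)\leq m$, the bound $tm+(k-t)(1/2+\eta)m\geq(1/2+\alpha-\eps)km$ gives $t\geq(\alpha-\eta-\eps)k/(1/2-\eta)>\alpha k$; the argument for indegrees is identical. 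The main obstacle is the parameter bookkeeping: the regularity lemma must be invoked at a strictly finer scale ($\eps_0\ll\eps$) than the target parameters, leaving room for the slicing lemma and the super-regularization while respecting the hierarchy $1/n\ll 1/k\ll\eps\ll d\ll\eta\ll\alpha$ throughout.
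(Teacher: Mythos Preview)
Your proof is correct and follows essentially the same approach as the paper: apply the digraph regularity lemma at a finer scale, find a Hamilton cycle in the reduced graph, remove atypical vertices along the cycle to achieve super-regularity, and then define~$\rstar$ by recording regular pairs and high-degree incidences with~$V_0$. If anything you are more explicit than the paper, which hides the Hamilton-cycle step inside the phrase ``a standard argument using regularity'' and omits the counting for~\ref{rg-rstar-deg-ii}; your invocation of Ghouila-Houri and the averaging bound for $t$ fill exactly those gaps.
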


\begin{proof}
  Introduce constants $\eps',d'$ with
  $\cramped{\frac{1}{K'}} \ll \eps'\ll\eps\ll d\ll d'\ll \eta$ and apply the digraph version of
  the Regularity Lemma (Lemma~\ref{l:regularity}) to $G$ with $\eps'$ and $d'$ in place of $\eps'$ and $d$. This yields an integer $k$ with $K' \leq k \leq K$,
  a partition $V(G) = V_0'\dcup V_1'\dcup\cdots\dcup V_k'$ and a spanning subgraph
  $G'$ of $G$ satisfying properties~\ref{reg1}--\ref{reg6} of Lemma~\ref{l:regularity}. Define a graph $R$ with vertex set $[k]$ in which $i \to j$ is an edge of $R$ precisely if $G'[V'_i \to V'_j]$ is $(d'_\geq, \eps')$-regular. For each $i \in [k]$, if we choose a vertex $x \in V'_i$ then $x$ has at least one outneighbour in each of at least
  $(\deg^+_{G'}(x) - |V_0'|)/|V'_1| \geq ((1/2 + \alpha)n - (d+\eps)n - \eps n)/(n/k) \geq (1/2 + \alpha/2)k$
  clusters $V'_j$ with $j \in [k] \setminus \{i\}$. It follows that $\delta^+(R) \geq (1/2 + \alpha/2)k$, and the same argument for inneighbours shows that $\delta^-(R) \geq (1/2 + \alpha/2)k$, so $\delta^0(R) \geq (1/2 + \alpha/2)k$. By Theorem~\ref{t:ham-cycle-n/2+1} (specifically the case of a directed cycle due to Ghouila-Houri~\cite{Ghouila60}) we find that $R$ contains a directed Hamilton cycle. By relabelling the clusters if necessary, we assume without loss of generality that the edges of this cycle are $1 \to 2$, $2 \to 3$, \dots, $k \to 1$.

  By definition of $(\dplus',\eps')$-regularity,
  for each~\ink\ the set $V'_i$ contains at most
  $\eps' n/k$ vertices which have fewer than $(d'-\eps')|V_\ipp'|$
  outneighbours in $V_\ipp'$ and at most $\eps' n/k$ vertices which
  have fewer than $(d'-\eps')|V_\imm'|$ inneighbours in $V_\imm'$. By
  moving $2\eps' n/k$ vertices (including all vertices with atypical
  degrees) from each $V_i'$ to $V_0'$ to obtain new sets $V_0, V_1, \dots, V_k$, we obtain a partition $V_0, V_1, \cdots, V_k$ of $V(G)$ for which~\ref{rg-i}
  and~\ref{rg-ii} hold.

  Let \rstar\ be the digraph with vertex set $V_0\dcup [k]$ and with edges defined as follows. For each distinct $i, j\in[k]$ we have
  $i\farc j\in E(\rstar)$ if $G[V_i\rarr V_j]$ is $(\dplus,\eps)$-regular, and for
  each $v\in V_0$ and each $i\in[k]$ we have $v\farc i\in E(\rstar)$ if
  $\deg_G^+(v,V_i) \geq (1/2+\eta)m$ and $v\barc i\in E(\rstar)$ if
  $\deg_G^-(v,V_i) \geq (1/2+\eta)m$.
  This definition ensures Properties~\ref{rg-rstar-ij-edge} and~\ref{rg-rstar-V0-edge}.
  Observe also that if $G[V'_i\rarr V'_{j}]$ is $(d'_\geq, \eps')$-regular then $G[V_i\rarr V_{j}]$ is $(\dplus, \eps)$-regular, and so for each $i \in[k]$ we have $\deg^0_{R^\star}(i, [k]) \geq \deg^0_{R}(i) \geq (1/2 + \eta)k$, giving~\ref{rg-rstar-deg-i}. Finally, consider any $v \in V_0$, and let $X \subseteq [k]$ be the set of all $i \in [k]$ for which $\deg_G^+(v,V_i) \geq (1/2+\eta)m$.
  Then $v$ has at most $|V_0| < \eps n$ outneighbours in $V_0$, and
  at most $(1/2+\eta)mk$ outneighbours in clusters $V_i$ with $i \notin X$,
  so $v$ has at least $\delta^0(G) - \eps n - (1/2+\eta)mk \geq \alpha n/2$ outneighbours in clusters $V_i$ with $i \in X$; since each cluster contains $m$ vertices it follows that $|X| \geq \alpha n/2m \geq \alpha k/2$, and so we have $\deg_\rstar^+(v,[k])>\alpha k/2$. An identical argument for inneighbours shows that $\deg_\rstar^-(v,[k])>\alpha k/2$, giving~\ref{rg-rstar-deg-ii}.
\end{proof}

\subsection{Homomorphisms}\label{s:homomorphisms}
As described in Section~\ref{s:outline}, a key idea for the proof of Theorem~\ref{t:treelike}
is to find an allocation of the vertices of a subforest $F$ of $Q$ to the clusters of a reduced
graph $R$ of the host graph $G$. This allocation is a homomorphism from $F \to R$ with properties
that will enable us to later embed each vertex of~$F$ into the cluster to which it is embedded.
In preparation for this, we make the following definitions.

Let $H$ and~$G$ be digraphs. A homomorphism~$\varphi: H\to G$ is an
edge-preserving map from~$V(H)$ to~$V(G)$, meaning that every edge
$u\farc v\in E(H)$ is mapped to an
edge~$\varphi(u)\farc \varphi(v)\in E(G)$. The
\defi{$\varphi$-indegree} $\deg_\varphi^-(v)$ of $v\in V(H)$ in $G$ is
$\bigl|\varphi\bigl(N_H^-(v)\bigr)\bigr|$; the
\defi{$\varphi$-outdegree} $\deg_\varphi^+(v)$ of $v$ is defined
similarly, and the \defi{$\varphi$-degree} of~$v$
is~$\deg_\varphi(v)\deq \deg_\varphi^-(v) + \deg_\varphi^+(v)$. Note
that vertices
in~$\varphi\bigl(N_H^-(v)\bigr)\cap\varphi(N_H^+(v)\bigr)$ are counted
twice towards $\deg_\varphi(v)$. The \defi{maximum degree} of $\varphi$ is
$\Delta(\varphi)\deq\max_{v\in V(H)} \deg_\varphi(v)$.

The embedding algorithm used in the proof of Theorem~\ref{t:treelike}
relies on a property of large dense digraphs stated in Lemma~\ref{l:goodness}. The form
in which it is stated here is a generalisation
of that used by K\"uhn, Mycroft and Osthus~\cite[Lemma~2.5]{KMO11:sumner_approximate}. We begin with a
definition.

\begin{definition}\label{d:good}
  Let $\beta,\gamma,m>0$, let $G$ and $R$ be digraphs and let $S$ be
  an oriented star with centre~$c$. Also let~$\varphi$ be a
  homomorphism from $S$ to~$R$, and let
  $J^-\deq \varphi\bigl(N_S^-(c)\bigr)$ and
  $J^+\deq\varphi\bigl(N_S^+(c)\bigr)$, so $|J^-|+|J^+|=\Delta(\varphi)$.
  Let $\calv=\{\,V_i:i\in V(R)\,\}$ be a
  partition of $V(G)$ such that $\beta m\le |V|\le m$ for each $V\in\calv$, fix a subset $U_j^\circ\subseteq V_j$ for each  $\circ\in\{-,+\}$ and~$j\in J^\circ$, and write $\calu^\circ=\{\,U_j^\circ\subseteq V_j: j\in J^\circ\,\}$ for each $\circ\in\{-,+\}$.
  A~subset $X\subseteq V_{\varphi(c)}$ is
  \defi{$(\calu^+, \calu^-,\beta,\gamma,\varphi,m)$-good} for~$S$ if for every
  possible choice of subsets
  $W_j^\circ\subseteq U_j^\circ$ of size $|W_j^\circ| \geq \beta m$ for each $\circ\in\{-,+\}$ and~$j\in J^\circ$,
  there are at least $\gamma \sqrt{m}$
  vertices $v\in X$ such that
  \begin{itemize}
  \item for all $j\in J^-$ we have $\deg^-(v,W_j^-)\geq \gamma m$, and
  \item for all $j\in J^+$ we have $\deg^+(v,W_j^+)\geq \gamma m$.
  \end{itemize}
\end{definition}

Here is some motivation for Definition~\ref{d:good}. We will embed the
vertices of the tree one by one, and, after embedding a vertex
$x\in V(T)$, we reserve sets of vertices for the children of~$x$. If the
reserved sets are always good, then this greedy embedding strategy
will succeed. The next lemma states one sufficient condition
for this to occur.

\begin{lemma}\label{l:goodness}
  Suppose that $\cramped{\frac{1}{m}}\ll\eps\ll\gamma\ll\cramped{\frac{1}{q}},\, \beta,\,d$. Let $G$
  and $R$ be digraphs and let $S$ be an oriented star with
  centre~$c$.
  Let $\varphi$ be a homomorphism from $S$ to~$R$ with $\Delta(\varphi) \leq q$ and let~$J^\circ\deq \varphi\bigl(N_S^\circ(c)\bigr)$ for each $\circ\in\{-,+\}$.
  Let $\calv=\{\,V_i:i\in V(R)\,\}$ be a
  partition of $V(G)$ such that $\beta m\le |V|\le m$ for all $V\in\calv$, and for
  each $\circ\in\{-,+\}$ and~$j\in J^\circ$ let $U_j^\circ\subseteq V_j$ be such that $G[V_{\varphi(c)}\leftarrow U_{\varphi(j^-)}^-]$ and $G[V_{\varphi(c)}\to U_{\varphi(j^+)}^+]$ are $(\dplus,\eps)$-regular
  for all $j^-\in J^-$ and~$j^+\in J^+$. Finally, write $\calu^\circ=\{\,U_j^\circ\subseteq V_j: j\in J^\circ\,\}$ for each $\circ\in\{-,+\}$.
  Then every subset $Y\subseteq V_{\varphi(c)}$ of size $|Y| \geq \gamma m/2$ contains a subset $V$ with $|V| = \sqrt{m}$
  which is
  $(\calu^+, \calu^-,\beta,\gamma,\varphi,m)$-good for $S$.
\end{lemma}

\begin{proof}
Introduce new constants $\eps'$ and $d'$ with $\eps \ll \eps' \ll \gamma \ll d' \ll \cramped{\frac{1}{q}},\, \beta,\,d$.
  By removing leaves if necessary, we may assume that
  $\varphi(x)\neq \varphi(y)$ whenever $x,y$ are both inleaves or both
  outleaves of~$S$. Indeed, this does not change the statement we are
  trying to prove, since the definition of
  $(\calu^+, \calu^-,\beta,\gamma,\varphi,m)$-good is not affected by whether the
  number of in-leaves (respectively out-leaves) of~$S$ mapped to a
  fixed $x\in V(R)$ is precisely~one or another positive integer.
  So we may proceed assuming that $S$~has precisely $q$~leaves~$x_1,\ldots,x_q$. For each $i \in [q]$ let $U_i \deq U_{\varphi(x_i)}^\bullet$, where $\bullet \in \{+, -\}$ is such that $x_i \in N_S^\bullet(c)$.
  For each $i\in[q]$, let $S_i\deq S\bigl[\{c,x_1,\ldots,x_i\}\bigr]$,
  so~$S_q=S$.

  Fix $Y\subseteq V_{\varphi(c)}$
  with~$|Y| \geq \gamma m/2$.
  For each~$t\in[q]$, each $t$-tuple $\bft=(v_1,\ldots,v_t)$
  of distinct vertices with
  $v_j\in U_j$ for each $j\in[t]$, and each subset
  $Z \subseteq Y$, we write $\commneigh{S_t}{\bft}{Z}$ for the set
  of vertices $v\in Z$ such that mapping $c \mapsto v$ and $x_j\mapsto v_j$ for
  each $j\in[t]$ gives a homomorphism from~$S_t$ to~$G$.
  Also let $T$ be the set of tuples
  $(v_1,\ldots,v_q)$ of distinct vertices with
  $v_i\in U_i$ for each $i\in [q]$; call $\bft\in T$
  \defi{bad} if $\bigl|N^S(\bft,Y)\bigr|< d'|Y|$ and
  \defi{good} otherwise.

Let $B$ be the set of all bad tuples in $T$; our first goal is to show that $B$ is small.
To do this, set $V^0 \deq Y$, and suppose that we choose $v_i \in U_i$ for each $i \in [q]$ in turn, at each step setting $V^i\deq N^{S_i} \bigl((v_1,\ldots,v_i),V^{i-1}\bigr)$.
This yields a tuple ${\bft} = (v_1,\ldots,v_q) \in T$. For each $i \in [q]$ say that the choice of $v_i$ is \defi{bad} if $|V^i| < (d/2)|V^{i-1}|$. So if we do not make any bad choices, then for each $i \in [q]$ we have $|V^i| \geq (d/2)|V^{i-1}|$, and in particular $|V^q| \geq (d/2)^q|V^0| \geq d'|Y|
$, so ${\bft}$ is good.
It follows that if the outcome {\bft} is not good, then for some $i \in [q]$ we must have made a bad choice. By definition of $V^i$ this implies that $\deg^\circ(v_i,V^{i-1}) < |V^{i-1}|d/2$, where $\circ \in \{+, -\}$ is such that $c\in N_S^\circ(x_i)$. Now consider the smallest $i$ for which we made a bad choice of $v_i$; this minimality property implies that $|V^{i-1}| \geq (d/2)^{i-1}|V^{0}| \geq (\gamma d^{i-1}/2^{i}) |V_{\varphi(c)}|$,
and so $G[U_i \rarr V^{i-1}]$ (if $\circ = +$) or $G[V^{i-1} \rarr U_i]$ (if $\circ = -$) is $(d_\geq, \eps')
$-regular by Lemma~\ref{l:slice-pair}.
However, since $d/2 < d-\eps'$,
it follows that at most $\eps'|U_i| \leq  \eps' m$
vertices $b_i\in U_i$ have $\deg^\circ(b_i,V^{i-1}) < |V^{i-1}|d/2$, so there are at most $\eps' m$ bad choices for~$v_i$. We conclude that if ${\bft}$ is bad then we must have made the first bad choice when choosing $v_i$ for some $i \in [k]$, at which point there were at most $\eps' m$ options for the bad choice, so in total we have $|B| \leq k\eps' m^q$ (since we have $m$ options for each other choice).

  Now choose a set $X\subseteq Y$ of size $|X|= \sqrt{m}$ uniformly at random. For each $\bft\in T$ which is good, $\bigl|\commneigh{S}{\bft}{X}\bigr|$ has hypergeometric distribution with expectation $\bigl|\commneigh{S}{\bft}{Y}\bigr| (|X|/|Y|) \geq d'|X| = d' \sqrt{m}$.
    So by Theorem~\ref{t:exp} the
    probability that
    $\bigl|\commneigh{S}{\bft}{X}\bigr| < d'\sqrt{m}/2$
    decreases exponentially with $m$. Since $|T\setminus B| \leq |T| \leq (\beta m)^q$, by taking a union bound we conclude that with high probability every $\bft\in T\setminus B$ has
    $\cramped{\bigl|\commneigh{S}{\bft}{X}\bigr|\ge
      d' \sqrt{m}/2
    }$.
    Fix a choice of $X$ for which this event occurs.

  It remains to show that
  $X$ is $(\calu^+, \calu^-,\beta,\gamma,\varphi,m)$-good for~$S$.
  So fix subsets $W_j^\circ \subseteq U_j^\circ$ of size $|W_j^\circ| \geq \beta m$ for each $\circ \in \{+, -\}$ and each $j \in J^\circ$.
  Let $T'$ be the set of tuples $(v_1,\ldots,v_q)$ of distinct vertices with
  $v_i\in W_{\varphi(x_i)}^-$ for each $i\in [q]$ with $x_i \in N^-_S(c)$ and
  $v_i\in W_{\varphi(x_i)}^+$ for each $i\in [q]$ with $x_i \in N^-_S(c)$.
  So~$|T'|\ge(\beta m)^q - qm^{q-1}$, and therefore at least $|T'| - |B| \geq (\beta m)^q - q^2m^{q-1} - k\eps' m^q \geq \beta^q m^q/2$ tuples $\bft\in T'$ are good and so have $\bigl|\commneigh{S}{\bft}{X}\bigr|\geq d'\sqrt{m}/2$.
  Let $\calp\deq \bigl\{\, (v,\bft) : \bft\in T',\, v\in\commneigh{S}{\bft}{X}\,\bigr\}$, so
  \begin{equation}\label{e:calp}
    |\calp|
    \ge  \frac{\beta^q m^q}{2} \cdot \frac{d' \sqrt{m}}{2}
    \ge  \frac{d'\beta^q}{4} m^{q+1/2}.
  \end{equation}
  In particular, at least
  $\gamma \sqrt{m}$ vertices
  $v^\star\in X$ must have $v^* \in\commneigh{S}{\bft}{X}$ for at least
  $\gamma m^q$ tuples $\bft\in T'$. Indeed, we would otherwise contradict~\eqref{e:calp}, since we would then have
  \[
    |\calp|
    < |T'| \cdot \gamma \sqrt{m}
    +  |X| \cdot \gamma m^q
    \leq m^q \cdot \gamma \sqrt{m} + \sqrt{m} \cdot \gamma m^q \leq 2\gamma m^{q+1/2}.
    \]
    Each such~$v^\star \in X$ must then have
  at least $\gamma m^q/m^{q-1} = \gamma m$ inneighbours in $W_j^-$ for each $j \in J^-$ and at least~$\gamma m$ outneighbours in $W_j^+$ for each $j \in J^+$, as required.
\end{proof}

\subsection{Matching vertices}

We use two simple results about matchings, whose straightforward
proofs we omit.

\begin{lemma}\label{l:greedy-cover}
  Let $G$ be a bipartite graph with vertex classes
  \(V\)~and~\(W,\) and suppose every vertex in $V$ has degree at
  least $\eps|W|$. Then there exists a subgraph $H\subseteq G$ such that
  \begin{enumerate}
  \item $\deg_H(v)=1$ for each $v\in V$,
  \item $\deg_H(w)\leq1+\frac{|V|}{\eps|W|}$ for each $w\in W$.
  \end{enumerate}
\end{lemma}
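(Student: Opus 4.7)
The plan is a direct greedy construction. I order the vertices of $V$ arbitrarily as $v_1,\ldots,v_{|V|}$, start with $H$ the empty subgraph on $V\cup W$, and process the $v_i$ one at a time: when processing $v_i$, I select any neighbour $w\in N_G(v_i)$ whose current degree in $H$ satisfies $\deg_H(w)\le |V|/(\eps|W|)$ and add the edge $v_iw$ to $H$. By construction each $v\in V$ ends with $\deg_H(v)=1$, giving~(i), and because the selected $w$ had degree at most $|V|/(\eps|W|)$ before the addition its updated degree satisfies $\deg_H(w)\le 1+|V|/(\eps|W|)$, so the bound in~(ii) is preserved throughout.

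The only thing to verify is that a suitable neighbour always exists. I argue by contradiction: suppose at some step $i$ every $w\in N_G(v_i)$ already has $\deg_H(w) > |V|/(\eps|W|)$. Since $|N_G(v_i)|\ge \eps|W|$ by hypothesis, summing over just these neighbours gives
\[
  \sum_{w\in N_G(v_i)} \deg_H(w) \;>\; \eps|W|\cdot \frac{|V|}{\eps|W|} \;=\; |V|.
\]
But this partial sum is bounded above by the full degree sum $\sum_{w\in W}\deg_H(w) = e(H) = i-1 \le |V|-1$, a contradiction. Hence the greedy step never fails and the resulting $H$ satisfies both conditions. There is no real obstacle here; the argument is a short greedy-plus-pigeonhole proof, and the only mild subtlety is choosing the threshold $|V|/(\eps|W|)$ on the current load (rather than the final bound $1+|V|/(\eps|W|)$) so that adding one edge still keeps $\deg_H(w)$ within the required range.
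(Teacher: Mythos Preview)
Your proof is correct: the greedy assignment together with the pigeonhole contradiction is exactly the intended argument, as the paper's label \texttt{l:greedy-cover} suggests (the paper itself omits the proof as a straightforward exercise). Your choice of threshold $|V|/(\eps|W|)$ for the current load is precisely right, and the counting argument showing the step never fails is clean.
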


\begin{fact} \cite[Exercise 16.1.6]{BondyMurty08}\label{f:equitable-matchings}
  Let $M$ and $N$ be edge-disjoint matchings in a graph $G$.
  If $|M|>|N|$, then there exist edge-disjoint matchings $M'$ and $N'$ in $G$
  such that $|M'|=|M|-1$, $|N'|=|N|+1$ and~$M'\cup N'=M\cup N$.
\end{fact}

\subsection{Patterns and diamonds}
\label{s:diamonds}

We define a \defi{pattern} $\hat P$ to be a rooted oriented path whose root is a leaf; this means that for a fixed path length~$\ell$, a pattern is determined by whether each edge along the path is directed towards the root (`ascending') or away from it (`descending'), so there are $2^\ell$ distinct patterns with path length $\ell$. Now let $T$ be an oriented tree with a fixed root $r$. Then we can consistently define the pattern of each bare path $P$ in $T$ which does not contain $r$. Indeed, let $r'$ be the unique vertex of $P$ which is closest in $T$ to $r$. Because $P$ is a bare path, $r'$ must be an endvertex of $P$, and then the pattern of $P$ is simply $P$ (with the orientation inherited from $T$) with root $r'$.

Let $P$ be a pattern with three vertices $a, b$ and $c$, where $a$ is the root and $b$ is adjacent to both $a$ and~$c$. A \defi{$P$-diamond} is an orientation of a cycle on four vertices $u, v, v'\!, w$ in which both $\{u, v,w\}$ and $\{u, v'\!, w\}$ induce patterns which are isomorphic to $P$ (with root $u$ in each case).
For example, if $P$ is $a\farc b\farc c$ then the digraph $H$ with
$V(H)=\{u,v,v'\!,w\}$ and $E(H)=\{u\farc v,\, u\farc v'\!,\, v\farc w,\, v'\farc
w\}$ is a $P$-diamond (see Figure~\ref{fig:diamonds}). We call the paths $uvw$
and $uv'w$ the \defi{branches} of the diamond, and say that
the $P$-diamond $H$ has \defi{prefix} $u$, \defi{middle}
$\{v,v'\}$ and \defi{suffix} $w$; we denote the $P$-diamond
with this prefix, middle and suffix by $\gadget u{\mathrlap{v}\phantom{v'\!}}{v'\!}{w}$.
If $P$ is clear from the context, we write \defi{diamond} instead of
$P$-diamond.  A \defi{$P$-diamond path} in a digraph $D$ is a sequence
of $P$-diamonds
$\bigl(\gadget{u_i}{\mathrlap{v_i}\phantom{v_i'}}{v_i'}{w_i}\bigr)_{i=0}^t$ such
that $v_i=v_{i-1}'$ for each $i\in[t]$; we say that this path
\defi{connects} $v_0$ and~$v_t'$. Finally, let $\cald$ be a set of $P$-diamonds in a graph~$G$. We say that $\cald$ is \defi{connecting} in $G$ if for each pair $u, v \in V(G)$ there exists a $P$-diamond path, only using diamonds in $\cald$, which connects $u$ and $v$.

Due to the nature of Theorem~\ref{t:treelike}, most patterns in our proof will have order two or seven. For notational convenience we make the following definitions, which allows us to usefully speak of $P$-diamond graphs for these patterns also. For a pattern $P$ of order seven, let $v_1, \dots, v_7$ be the vertices of $P$ in order as they appear in the path, with $v_1$ being the root, and let $P'$ be the pattern induced in $P$ by $\{v_3, v_4, v_5\}$, with $v_3$ being the root of $P'$. Then we write $P$-diamond to mean $P'$-diamond. Similarly, for a pattern $P$ of order two, let $u$ be the root and $v$ the other vertex of $P$. Form a pattern $P'$ by adding a third vertex $w$ as an outneighbour of $v$ (we keep $u$ as the root of $P'$). Again we then write $P$-diamond to mean $P'$-diamond.

\begin{figure}[t!]
  \begin{center}
    \includegraphics{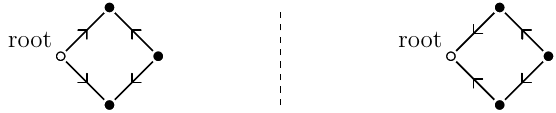}
    \caption[Diamonds]{Left: a
      $(\circ \farc \bullet \barc \bullet)$-diamond.  Right: a
      $(\circ \barc \bullet\barc \bullet)$-diamond ($\circ$~is
      the root of the path).}
    \label{fig:diamonds}
  \end{center}
\end{figure}

\begin{lemma}[$P$-connected subgraphs]\label{l:many-paths/diamonds}
  Let~$G$ be a digraph of order~$n$
  with $\delta^0(G) \geq \bigl(\frac 12 + \alpha\bigr) n$ for some positive $\alpha \leq \cramped{\frac{1}{2}}$.
  If~$P$ is a pattern of order two, three or seven,
  then there exists a connecting set~$\cald$ of $P$-diamonds in~$G$ with $|\cald| = n-1$ and so that no vertex of~$G$ appears in more than $4/\alpha$ diamonds in $\cald$.
\end{lemma}

\begin{proof}
  By the definition of $P$-diamond for patterns of order two or seven, it suffices to prove the lemma in the case when $P$ is a pattern of order three. Also, we may take $V(G)=[n]$.
  For each $i\in[n-1]$, let $\Diamond_i$ be the set of all $P$-diamonds
  with middle $\{i,i+1\}$.
  Let $B_{\mathrm{pref}}$ be
  a bipartite graph with vertex classes
  $\Diamond\deq\{\Diamond_1,\ldots\Diamond_{n-1}\}$ and~$[n]$, with an
  edge between $\Diamond_i$ and $x\in[n]$ if $x$ is a prefix of a
  $P$-diamond in $\Diamond_i$. Since $i$ and $i+1$ each have
  at least $n/2 + \alpha n$ in- and out-neighbours,
  we have $\deg_{B_{\mathrm{pref}}}\bigl(\Diamond_i\bigr)\geq 2\alpha n$. Therefore, by
  Lemma~\ref{l:greedy-cover}, there exists $H_{\mathrm{pref}}\subseteq
  B_{\mathrm{pref}}$ such that each vertex of $\Diamond$ is covered by
  precisely one edge of $H_{\mathrm{pref}}$ and each vertex of~$[n]$
  is covered by at most $1+(n-1)/2\alpha n \leq 2/\alpha$ edges of~$H_{\mathrm{pref}}$. We
  define $B_{\mathrm{suff}}$ similarly for suffixes and obtain the
  corresponding graph $H_{\mathrm{suff}}$. For each $i\in[n-1]$, let $p_i$
  be the neighbour of $\Diamond_i$ in $H_{\mathrm{pref}}$ and let
  $s_i$ be the neighbour of $\Diamond_i$ in $H_{\mathrm{suff}}$, so that the set
  $\cald \deq \{\gadget{p_i}{i}{i+1}{s_i}: i \in [n-1]\}$ is a connecting set of $P$-diamonds in~$G$ with $|\cald| = n-1$. Moreover, each vertex of $G$ appears in at most $2/\alpha + 2 \leq 4/\alpha$ diamonds in $\cald$: at most $1/\alpha$ times as a prefix, $1/\alpha$ times as a suffix, and twice in the middle.
\end{proof}

The next lemma is our main tool for adjusting vertex allocations (see
Figure~\ref{fig:switching}).

\begin{lemma}\label{l:shifting}
  Let $m, k \in \bbn$, let~$R$ be a digraph of order~$k$,
  let~$P$ be a pattern of order two, three or seven, and
  let~$\cald$ be a connecting set of $P$-diamonds in~$R$.
  Also fix integers $\delta_v$ with $|\delta_v| < m$ for each $v\in V(R)$
  such that $\sum_{v\in V(R)} \delta_v = 0$.
  Let $\calp$ be a collection of pairwise vertex-disjoint oriented paths with pattern $P$, and let $Q$ be the oriented graph which is the disjoint union of the paths in $\calp$.
  If $P$ has order two or three, then let $M$ be the set consisting of the second vertex of each path in $\calp$; if instead $P$ has order seven, then let $M$ be the set consisting of the fourth vertex of each path in $\calp$.
  If there exists a
  homomorphism $\varphi: Q \to R$ such
  that for each diamond $D = \gadget{x_i}{y_i}{w_i}{z_i}$ in $\cald$ at least
  $km$ paths in $\calp$ are mapped to
  $x_iy_iz_i$ and at least $km$ paths are mapped to
  $x_iw_iz_i$, then there exists a homomorphism $\varrho:Q\to R$ such
  that $|\varrho^{-1}(v)|=|\varphi^{-1}(v)|+\delta_v$ for all $v\in V(R)$,
  and with the property that $\varrho(v) = \varphi(v)$ for every $v \in V(Q) \setminus M$.
\end{lemma}

\begin{proof}
As in the previous lemma, it suffices to prove the lemma in the case when $P$ is a pattern of order three.
  We proceed iteratively as follows.  Let $u,\,v\in V(R)$ be such that
  $\delta_v<0<\delta_u$, and consider the $P$-diamond path from $u$
  to~$v$. Let $\bigr(\gadget{x_i}{y_i}{w_i}{z_i}\bigr)_{i=1}^t$ be the
  sequence of diamonds in this path, so $u=y_1$ and $v=w_t$. For each
  $i\in [t]$, select a path in $Q$ which is mapped to $x_iy_iw_i$, and
  modify the mapping so that it is now mapped to
  $x_iw_iz_i$ (see Figure~\ref{fig:switching}).  These changes result in a
  homomorphism $\xi : Q \to R$ such that $|\xi^{-1}(u)|=|\varphi^{-1}(u)|-1$ and
  $|\xi^{-1}(v)|=|\varphi^{-1}(v)|+1$, whereas
  $|\xi^{-1}(x)|=|\varphi^{-1}(x)|$ for all $x\in V(R)\setminus \{u,v\}$. So the number of paths mapped to any diamond branch is reduced by at most one; moreover we have $\xi(v) = \varphi(v)$ for every $v \in V(Q) \setminus M$. By iterating this procedure at
  most $\sum_v |\delta_v|\leq km$ times we achieve the desired mapping~$\varrho$; since each diamond initially had at least $km$ paths mapped to each branch, we never run out of paths to edit.
\end{proof}

\begin{figure}[b]
  \begin{center}
    \includegraphics{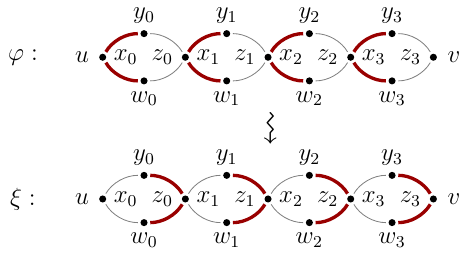}
    \caption[Correcting allocation with $P$-diamonds]{Remapping paths to the other branch of each diamond reduces the number of vertices mapped to $u$ by one and increases the number of vertices mapped to $v$ by one, with the number for each other vertex unchanged.}
    \label{fig:switching}
  \end{center}
\end{figure}

\subsection{Regular expander subdigraphs}
\label{s:regular-expander}

We call a digraph~$D$ an \defi{expander} if
$\bigl|N^-(S)\bigr| > |S|$ and $\bigr|N^+(S)\bigr|> |S|$ for all
nonempty proper subsets $S\subsetneq V(D)$. The next lemma shows that every digraph $G$ which satisfies our minimum degree condition contains a spanning subgraph $H$ which is a regular expander and which is not too dense. Moreover we may insist that $H$ contains a given subgraph of $G$ of bounded maximum degree.

\begin{lemma}\label{l:reg-expander}
  Suppose that $\cramped{\frac{1}{n}}\ll \cramped{\frac{1}{f}},\alpha \leq 1$. Let $G$ be a digraph of order $n$ with
  $\delta^0(G)\geq (\frac 12 + \alpha) n$.
  If $F\subseteq G$ and~$\Delta^0(F)\leq f$, then $G$ contains a
  spanning $d$-regular subdigraph $H$ such that
  \begin{enumerate}

  \item\label{l:reg-expander-F}
    $F \subseteq H$,

  \item $d\leq 25n^{2/3}/\alpha$, and
    \label{l:reg-expander-i}

  \item $H$ is an expander.
    \label{l:reg-expander-ii}
  \end{enumerate}
\end{lemma}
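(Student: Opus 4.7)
The plan is to construct $H$ in two stages. First, find a spanning $d$-regular subdigraph of $G$ that contains $F$ via a bipartite matching argument. Second, use randomisation to guarantee additionally that $H$ is an expander. Choose $d$ to be any integer in $[20n^{2/3}/\alpha,\,25n^{2/3}/\alpha]$, so $d\ge 2f$ and $d\le\alpha n/2$ for $n$ large.

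For the first stage, I would consider the bipartite graph $B$ with parts $V^+\deq\{v^+:v\in V(G)\}$ and $V^-\deq\{v^-:v\in V(G)\}$ and edge $v^+u^-$ whenever $v\farc u\in E(G)$; then $B$ is balanced with $\delta(B)\ge(1/2+\alpha)n$, and $F$ corresponds to a subgraph $B_F\subseteq B$ with $\Delta(B_F)\le f$. Any spanning $d$-regular subgraph $H^\star\subseteq B$ containing $B_F$ translates back to the desired $H\subseteq G$, and by the Gale--Ryser (defect) form of Hall's theorem, $H^\star$ exists iff
\[
  d(|S|-|T|)+e_{B_F}(V^+\setminus S,T)\le e_B(S,V^-\setminus T)
  \qquad\text{for all }S\subseteq V^+,\ T\subseteq V^-.
\]
This is straightforward to verify by case analysis using the bound $e_B(S,V^-\setminus T)\ge|S|\max\bigl(0,(1/2+\alpha)n-|T|\bigr)$ (and its symmetric counterpart), together with $\Delta(B_F)\le f$ and $d\le\alpha n/2$.

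For the second stage, I would then choose $H^\star$ uniformly at random among all valid choices. I first observe that the out-expansion condition for $|S|>n/2$ follows from the in-expansion condition applied to $Z=V(G)\setminus N_H^+(S)$ (which has size $\ge n-|S|$), so it suffices to check both expansion conditions for $|S|\le n/2$. For each such nonempty $S$ and each $T\subseteq V(G)$ with $|T|=|S|$, I would bound $\bbp\bigl(N_H^+(S)\subseteq T\bigr)$ by a standard switching argument for random regular subdigraphs: each configuration with $N_H^+(S)\subseteq T$ admits many $2$-switches replacing an arc $v\farc u$ (with $v\in S$, $u\in T$) by $v\farc u'$ (with $u'\in N_G^+(v)\setminus T$, of which there are at least $\alpha n$ per vertex $v\in S$), compensated by a reverse swap at a vertex outside $S$. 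A counting comparison shows that the ratio of non-bad to bad configurations grows exponentially in~$d$, so a union bound over $S$ and $T$ (plus the symmetric in-neighbourhood case) yields $o(1)$ total failure probability.

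The main obstacle is the expansion verification in the middle range $|S|\approx n/2$, where the union-bound factor $\binom{n}{|S|}^2$ is as large as $4^n$. Here the $\alpha n$ slack in the semi-degree of $G$ translates (via the switching argument) into a per-vertex improvement factor of roughly $(1+2\alpha)^{-d}$, and after multiplying over $|S|\approx n/2$ vertices this yields a per-$(S,T)$ probability bound of order $e^{-\alpha d n}$. The choice $d=\Theta(n^{2/3}/\alpha)$ makes this exponent dominate over $n\log 4$, closing the union bound; smaller $d$ (for instance $d$ polylogarithmic) would not suffice.
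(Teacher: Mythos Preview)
Your approach is genuinely different from the paper's, and the first stage (Gale--Ryser existence of a $d$-regular subdigraph containing $F$) is fine. The real gap is in the second stage. The phrase ``standard switching argument for random regular subdigraphs'' hides the difficulty: you are not working in the configuration model but in the uniform distribution over $d$-regular subdigraphs of a \emph{specific} host~$G$ that moreover contain a \emph{fixed}~$F$. A single switch must replace $v\farc u$ and $w\farc u'$ by $v\farc u'$ and $w\farc u$, and for this you need $w\farc u\in E(G)\setminus E(H)$; but you only have $d$ candidates~$w$ (the $H$-inneighbours of~$u'$), and nothing guarantees any of them is a $G$-inneighbour of~$u$. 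So the forward switches are not always available, and the ``per-vertex factor $(1+2\alpha)^{-d}$'' is a heuristic from the independent-choice model, not something the sketch actually establishes. Making a switching argument rigorous here would require a more elaborate multi-step switching or a careful averaging over the auxiliary choices, neither of which is routine.

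The paper avoids this entirely by reversing the order: it first samples each edge of~$G$ independently with probability~$n^{-1/3}$ to get a sparse subdigraph~$H_p$, verifies (by Chernoff and a straightforward union bound over~$S$) that $H_p$ is an expander with $\Delta^0(H_p)\le 4n^{2/3}$, and then observes that expansion is monotone under adding edges. So it takes $H'\deq H_p\cup F$ and extends~$H'$ to a $d$-regular spanning subdigraph~$H\supseteq H'$ by decomposing $E(H')$ into $\bigoh(n^{2/3}/\alpha)$ small matchings and greedily extending each to a spanning $1$-regular subdigraph using the semidegree condition. Since $H\supseteq H_p$, expansion comes for free. This sidesteps all the correlation issues in your random-regular model.
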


\begin{proof}
Form a subgraph $H_p \subseteq G$ with $V(H_p) = V(G)$ by selecting each edge of $G$ for inclusion
in $H_p$ with probability $p\deq n^{-1/3}$, with the choices for each edge being independent of all other choices.
  \begin{claim}\label{cl:reg-expander-aux}
    With high probability,
    \begin{enumerate}[label={(\alph*)}]

    \item\label{cl:degree}
      every vertex of $H_p$ has in- and outdegree at most $2n^{2/3}$, and

    \item\label{cl:expander}
      if $S$ is a nonempty proper subset of $V(G)$, then
      $\bigl|N_{H_p}^-(S)\bigr|,\bigl|N_{H_p}^+(S)\bigr|>|S|$.
    \end{enumerate}
  \end{claim}

  \begin{claimproof}[Proof of claim]
    Let $x\in V(H_p)$.  Note that $\deg_{H_p}^-(x)$ and $\deg_{H_p}^+(x)$
    are binomial random variables with expectation between
    $(\frac 12 + \alpha)n^{2/3}$ and $n^{2/3}$. By Theorem~\ref{t:exp} (applied with $a=1$) we have
    \begin{equation}\label{e:deg-chernoff}
      \begin{split}
        \bbp\bigl(\deg_{H_p}^-(x) > 2n^{2/3}\bigr) &\leq 2\exp\bigl(-n^{2/3}/6\bigr)\quad\tand\\
        \bbp\bigl(\deg_{H_p}^+(x) > 2n^{2/3}\bigr)
        &\leq 2\exp\bigl(-n^{2/3}/6\bigr).
      \end{split}
    \end{equation}
    Taking a union bound over all $n$ vertices we find that \ref{cl:degree}~holds with high
    probability.  To prove \ref{cl:expander}, we will show that with high probability each nonempty proper subset $S\subseteq V(G)$ has
    $|N_{H_p}^+(S)| >|S|$; the same argument with directions reversed shows that with high probability each nonempty proper subset $S\subseteq V(G)$ has
    $|N_{H_p}^-(S)| >|S|$, giving the desired conclusion. So fix a nonempty proper subset $S\subseteq H_p$, and let $Z_S$ denote the event that $|N_{H_p}^+(S)|\leq|S|$.
    We consider four cases.

    If
    $|S|<n^{1/2}$, then choose any $x \in S$ and recall that $\deg^+_{H_p}(x)$ is a
    binomial random variable
    with expectation at least $(\frac 12 +\alpha)n^{2/3}$.
    Applying Theorem~\ref{t:exp} with $a=1/2$ we find that
    \begin{equation}\label{e:S-small}
      \bbp(Z_S)
      \leq \bbp\left(\deg_{H_p}^+(x) < n^{1/2}\right)
      \leq 2\exp\Bigl(-\frac{n^{2/3}}{24}\Bigr) \leq \exp\Bigl(-\frac{n^{2/3}}{25}\Bigr).
    \end{equation}
    If $n^{1/2}\leq |S|<n/2$, then for $Z_S$ to occur there must exist a set $T \subseteq V(G)$
    with $|T| \geq n/2$ such that $H_p[S \farc T]$ is empty. So fix $T \subseteq V(G)$
    with $|T| \geq n/2$, and observe that each $x \in S$ has $\deg_G^+(x, T) \geq \alpha n$.
    It follows that $e(G[S \farc T]) \geq |S| \alpha n/2 \geq \alpha n^{3/2}/2$, and so
    $e(H_p[S \farc T])$ has a binomial distribution with expectation at least $\alpha n^{7/6}/2$; by
    Theorem~\ref{t:exp} applied with $a=1$ we find the probability that $H_p[S \farc T]$
    is empty to be at most $2\exp(-\alpha n^{7/6}/6)$. Taking a union bound over all $T$ we obtain
    \begin{equation}\label{e:S-medium}
      \bbp(Z_S)
      \leq 2^{n+1} \exp\Bigl(-\frac{\alpha n^{7/6}}{6}\Bigr)
      \leq  \exp\Bigl(-\frac{\alpha n^{7/6}}{10}\Bigr).
    \end{equation}

    Similarly, if $n/2\leq |S| \leq n - n^{1/2}$, then for $Z_S$ to occur there must exist a
    set $T \subseteq V(G)$ with $|T| \geq n^{1/2}$ such that $H_p[S \farc T]$ is empty. So fix
    $T \subseteq V(G)$ with $T \geq n^{1/2}$, and observe that each $x \in T$ has $\deg_G^-(x, S) \geq \alpha n$. So $e(G[S \farc T]) \geq |T| \alpha n/2 \geq \alpha n^{3/2}/2$, and so we obtain~\eqref{e:S-medium} exactly as in the previous case.

    Finally, if $|S| > n-n^{1/2}$, then for each $x \in V(G)$ we have
    $\deg_G^-(x,S)\geq \deg_G^-(x) - n^{1/2} \geq n/2$, so $\deg_{H_p}^-(x,S)$
    is a~binomial random variable with expectation at least~$n^{2/3}/2$.
    Applying Theorem~\ref{t:exp} with $a=1$ we deduce that $\bbp(\deg_{H_p}^-(x,S) = 0) \leq 2\exp(-n^{2/3}/6)$, and so taking a union bound over all vertices we obtain
    \begin{equation}\label{e:S-large}
      \bbp(Z_S)
      \leq \sum_{x\in V(G)} \bbp\Bigl(\deg_{H_p}^-(x, S) = 0 \Bigr)
      \leq 2n \exp\Bigl(-\frac{n^{2/3}}{6}\Bigr) \leq \exp\Bigl(-\frac{n^{2/3}}{25}\Bigr).
    \end{equation}
    Taking a union bound, we obtain the desired bound of
    \begin{align*}
     \bbp\bigl(\bigcup_S Z_S\bigr)
      &\leq \sum_{|S|<\sqrt{n}} \,\,\bbp(Z_S)\quad\,\,
      +\,\,\quad \sum_{\mathclap{\sqrt{n}\leq |S| \leq n-\sqrt{n}}} \,\,\bbp(Z_S) \quad\,\,
      +\,\,\quad \sum_{\mathclap{n-\sqrt{n} < |S|}} \,\,\bbp(Z_S)\\
      &\leq 2\sqrt{n}\binom{\phantom{\sqrt{n}}\mathllap{n}\,\,}{\sqrt{n}\,\,}
      \underbracket{\exp\left(-\frac{n^{2/3}}{25}\right)}_{\text{\eqref{e:S-small} and \eqref{e:S-large}}} +
      2^n\underbracket{\exp\left(-\frac{\alpha n^{7/6}}{10}\right)}_{\text{\eqref{e:S-medium}}}
      = \littleo(1),
    \end{align*}
    where the sums and union each range over all proper nonempty subsets $S\subseteq V(G)$ with the specified sizes, and we use the bounds
    $\binom{n}{\sqrt{n}}\leq
    (\sqrt{n}\,e)^{\sqrt{n}}\leq\exp\bigl(\sqrt{n}\,\ln
    n\bigr)$.
  \end{claimproof}
  Returning to the proof of the lemma,
  fix an outcome of~$H_p$ such that
  \ref{cl:degree}~and~\ref{cl:expander}~hold
  and let~$H'\deq H_p\cup F$.
  Clearly each $H$ with $H'\subseteq H \subseteq G$
  satisfies both~\ref{l:reg-expander-F} and~\ref{l:reg-expander-ii}.
  So to conclude the proof
  it suffices to find such an~$H$
  satisfying~\ref{l:reg-expander-i}.
  By~\ref{cl:degree}, we have
  $\Delta^0(H')\leq 2n^{2/3}+f\leq 3n^{2/3}-1$.
  Hence, by Vizing's theorem, the edges of~$H'$
  can be partitioned into at most $3n^{2/3}$~matchings.
  It follows by repeatedly applying Fact~\ref{f:equitable-matchings} that
  $E(H')$ admits a partition~$\calm$ into matchings
  such that $|\calm|\leq 3n^{2/3}$
  and $|M| - |N| \leq 1$ for all~$M,\,N\in\calm$
  (more precisely, one may repeatedly apply Fact~\ref{f:equitable-matchings} to the largest
  and smallest matching in the partition, until all pairs of matchings have sizes differing by at most~1). By partitioning each matching in $\calm$ into $\lceil\frac{7}{\alpha}\rceil$ pairwise disjoint submatchings with sizes as close as possible, we obtain a partition
  $M_1,\ldots,M_d$ of~$E(H')$ into matchings with
  $d \leq 3n^{2/3}\lceil\frac{7}{\alpha}\rceil \leq   25n^{2/3}/\alpha$,
  such that $|M_i|\le \alpha n/7 + 1 \leq \alpha n/6$ for each $i\in[d]$,
  and $\bigl||M_i|-|M_j|\bigr|\le 1$ for all $i,\,j\in[d]$.
  The following procedure builds the desired~$H$.

  \begin{description}
  \item[Procedure]
    Let $G_0\deq G - E(H')$.  For each $i\in[d]$, in order,
    greedily choose a directed cycle $C_i$ in $G_{i-1}\cup M_i$, such that
    $C_i$ has length $3|M_i|$ and covers all edges in $M_i$; let $C_i'$
    be a directed Hamilton cycle in $G_{i-1}\setminus V(C_i)$ and let
    $G_i\deq G_{i-1}\setminus \bigl(E(C_i)\cup E(C_i')\bigr)$. We set
    $H\deq \bigcup_{i\in[d]} (C_i \cup C_i')$.
  \end{description}

  Let us check that these steps may be carried out. Fix~$i\in[d]$; it
  suffices to show that $C_i$ and $C_i'$ exist. Let
  $M_i=\{u_1\farc v_1,\ldots,u_r\farc v_r\}$. Since
  $\delta^0(G_i)\geq \delta^0(G_0)-d\geq (1/2+3\alpha/4)n$, for each
  pair of vertices $x,y\in V(G_i)$ there exists at least
  $3\alpha n/4 - 2|M_i| \geq |M_i|$ vertices $z$ such that
  $z\in (N_{G_i}^+(x)\cap N_{G_i}^-(y))\setminus V(M_i)$. We may therefore choose
  distinct vertices~$z_1, \dots, z_{|M_i|}$
  with~$z_j\in (N_{G_i}^+(v_j)\cap N_{G_i}^-(u_{j+1}))\setminus V(M_i)$
  for each~$j\in[r]$ (with  addition taken modulo~$r$). Let $C_i$ be the
  cycle~$u_1\farc v_1\farc z_1\farc u_2\farc\cdots\farc u_r\farc
  v_r\farc z_r\farc u_1$. Since $|C_i|= 3|M_i| \leq \alpha n/2$, it
  follows that $\delta^0\bigl(G_i\setminus V(C_i)\bigr)\geq (1/2+\alpha/4)n$, so
  $G_i\setminus V(C_i)$ contains a directed Hamilton cycle~$C_i'$.

  To complete the proof, observe that $H'\subseteq H \subseteq G$ and that $H$ is the edge-disjoint
  union of spanning subdigraphs $C_i\cup C_i'$ of $G$. Since each vertex of $G$ has precisely one
  inneighbour and one outneighbour in $C_i\cup C_i'$ for each $i \in [d]$,
  we conclude that $H$ is a spanning subgraph of~$G$ with
  $\deg_H^-(x)=\deg_H^+(x)=d\leq 25n^{2/3}/\alpha$ for each~$x\in V(H)$,
  so~\ref{l:reg-expander-i} holds.
\end{proof}

We next establish a property of regular expander digraphs $D$ which is crucial for our random allocation strategy, namely that if $X$ is a random vertex of $D$ (with some unspecified probability distribution on the vertices of $D$), and $Y$ is a uniformly random outneighbour (or inneighbour) of $X$, then the distribution of $Y$ is more uniform than the distribution of $X$, except in the case where $X$ is uniformly-distributed, in which case the same is true of $Y$. This property is established in Lemma~\ref{l:walk-on-expander}, using the notation we now introduce.

Let $D$ be a digraph of order $k$, and let $X$ be a random vertex of $D$. Formally speaking this means that $X$ is a random variable with codomain $V(D)$ in some suitable probability space, so, writing $V(D) = \{x_1, \dots, x_k\}$ we have a probability distribution $\bbp(X = x_i) = p_i$ on the vertices of $D$. We then define the variation of $X$ by
\[
  \Var(X) \deq \sum_{x\in V(D)} \left(\bbp(X=x)-\frac{1}{k}\right)^{\!2}.
\]
So $\Var(X)$ is a measure of how uniform the distribution of $X$ is, and in particular $\Var(X) = 0$ if and only if~$X$ is a uniformly-random vertex of $D$.

Consider the set $S \subseteq \bbr^k$ defined by $S = \{(p_1, \dots, p_k) : p_i \geq 0, \sum_{i=1}^k p_i = 1\}$, so $S$ is the set of all possible probability distributions for a random vertex $X$ of $D$. Observe that $S$ is a convex polytope whose vertices are the unit vectors along each axis of the coordinate system. Moreover, the function $\Var: \bbr^k \to \bbr$ given by $\Var((p_1, \dots, p_k)) = \sum_{i=1}^k (p_i-1/k)^2$ is convex, since it is a constant translation of the function which squares each coordinate. It follows that $\Var(\cdot)$ obtains its maximum value on $S$ at a vertex of $S$, and therefore that for any distribution of a random vertex $X$ in $D$ we have
\begin{equation}\label{e:m(.)}
    0 \leq \Var(X) \leq \left(1-\frac{1}{k}\right)^2 + \frac{k-1}{k^2} = 1 - \frac{1}{k} < 1.
\end{equation}

Our next lemma states that for every expander digraph and every assignment of weights to vertices,
there exist vertices with somewhat distinct weights which share
a common inneighbour (and the same is true for outneighbours).

\begin{lemma}\label{l:mix-neighbours}
  Let $D$ be an expander digraph of order~$n$, let $f:V(D)\to \bbr$ and let
  $M \deq \max_{x,\,y\in V(D)} f(x)-f(y)$. If $n\deq |D|\ge 3$ and~$M>0$,
  then there exist $u,\,x,\,y\in V(D)$ such that $x,\,y\in N^-(u)$ and
  $f(y)-f(x)\geq M/(n-1)$ and, similarly, there exist $v,\,w,\,z\in V(D)$
  such that $w,\,z\in N^+(v)$ and $f(w)-f(z)\geq M/(n-1)$.
\end{lemma}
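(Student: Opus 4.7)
The plan is to reduce the claim to an averaging argument along a path in an auxiliary graph. I would introduce the undirected graph $H$ on vertex set $V(D)$ in which two vertices $x, y$ are adjacent whenever they share a common out-neighbour in $D$ (equivalently, $x, y \in N^-(u)$ for some $u$). The heart of the proof will be showing that $H$ is connected, after which a shortest path in $H$ from the $f$-minimiser to the $f$-maximiser of $D$ has at most $n - 1$ edges, and telescoping furnishes the required pair.

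To show that $H$ is connected, I would argue by contradiction: if $V(D) = A \dcup B$ with $A, B$ nonempty and no $H$-edge between them, then every vertex of $A$ has out-neighbourhood disjoint from that of every vertex of $B$, so $N^+(A) \cap N^+(B) = \emptyset$. The expander hypothesis gives $|N^+(A)| > |A|$ and $|N^+(B)| > |B|$, hence $|N^+(A)| + |N^+(B)| > n$, contradicting the fact that these are disjoint subsets of the $n$-vertex set $V(D)$. This is the only place where expansion enters the proof, and is where I expect whatever mild difficulty there is to lie.

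With $H$ connected, fix $a, b \in V(D)$ minimising and maximising $f$ respectively, so that $f(b) - f(a) = M$, and let $a = w_0, w_1, \ldots, w_k = b$ be a shortest path in $H$ from $a$ to $b$; since its vertices are distinct we have $k \leq n - 1$. Telescoping gives $\sum_{j=0}^{k-1}(f(w_{j+1}) - f(w_j)) = M$, so some term satisfies $f(w_{j+1}) - f(w_j) \geq M/k \geq M/(n-1)$; setting $x = w_j$, $y = w_{j+1}$, and letting $u$ be any common out-neighbour of $x$ and $y$ in $D$ (which exists by definition of $H$) completes the first half. The second half, producing $v, w, z$ with $w, z \in N^+(v)$ and $f(w) - f(z) \geq M/(n-1)$, follows by the entirely symmetric argument applied to the auxiliary graph in which two vertices are adjacent iff they share a common in-neighbour in $D$, whose connectedness uses the $N^-$ half of the expander condition.
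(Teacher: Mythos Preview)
Your proposal is correct but follows a genuinely different route from the paper. The paper's proof partitions $V(D)$ into the level sets $S_1,\ldots,S_r$ of~$f$ (ordered increasingly), finds an index~$j$ where consecutive level values differ by at least $M/(r-1)\ge M/(n-1)$, sets $X=S_1\cup\cdots\cup S_j$ and $Y=S_{j+1}\cup\cdots\cup S_r$, and applies expansion once to obtain $|N^+(X)|+|N^+(Y)|>|X|+|Y|=n$, whence some $u\in N^+(X)\cap N^+(Y)$ has in-neighbours $x\in X$, $y\in Y$ with $f(y)-f(x)\ge M/(n-1)$. Your approach instead packages the expansion property into the connectedness of the common-out-neighbour graph~$H$, and then telescopes along a shortest $H$-path between the $f$-extremisers. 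Both arguments ultimately rest on the same pigeonhole observation (that $N^+(A)$ and $N^+(B)$ cannot be disjoint for complementary nonempty $A,B$), but the paper applies it at a single well-chosen cut, whereas you promote it to a structural fact about~$D$ and then argue indirectly. The paper's route is marginally shorter; yours has the merit of isolating a clean reusable statement (``the common-out-neighbour graph of an expander is connected'').
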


\begin{proof}
  It suffices to prove the existence of~$u,x,y$; the statement for
  $v,w,z$ follows by an identical argument with the roles of inneighbours
  and outneighbours switched. Let $S_1,\ldots,S_r$ be a partition of
  $V(D)$ such that for all $x,\,y\in V(D)$ we have $f(x)=f(y)$ if and only
  if $x,\,y\in S_i$ for some $i\in[r]$. Clearly, $1<r\leq n$.
  Since $f$~is constant in each set of this partition, we write~$f(i)$ for the
  common value of $f$ over all $x\in S_i$. We can assume that the sets
  are labelled so that $f(i)<f(j)$ whenever $i<j$. Note
  that~$M=f(r)-f(1)$, and therefore
  $f(j+1)-f(j)\geq M/(r-1)\geq M/(n-1)$ for some $j\in[r-1]$. Let
  $X\deq S_1\cup\cdots\cup S_j$ and
  $Y\deq S_{j+1}\cup\cdots\cup S_n$. Since $D$ is an expander,
  $\bigl|N^+(X)\bigr| > |X|$ and
  $\bigl|N^+(Y)\bigr|>|Y|$. Because $|X|+|Y|=n$ there must be a vertex
  $u\in N^+(X)\cap N^+(Y)$. Let $x\in X$ and $y\in Y$ be inneighbours
  of $u$. Then $f(y)-f(x)\geq f(j+1)-f(j)\geq M/(n-1)$ as desired.
\end{proof}

We are now ready to establish the key property of regular expanders we need for our allocation strategy. Note here that when we say that $Y$ is a uniformly-random outneighbour of $X$, we mean that the distribution of the random vertex $Y$ can be obtained by first choosing a vertex according to the distribution of $X$, then selecting a uniformly-random outneighbour of the chosen vertex.

\begin{lemma}\label{l:walk-on-expander}
  Let $D$ be an $d$-regular expander digraph of order~$k$. Let $X$ be a random vertex of $D$, and let $Y$ be a uniformly-random outneighbour of $X$. Then
  \begin{equation}\label{e:dist-change}
    \Var(Y) \leq \left(1-\frac{1}{2k^5}\right) \Var(X).
  \end{equation}
The same bound holds if $y$ is a uniformly-random inneighbour of $x$.
\end{lemma}

\begin{proof}
For each $x \in V=V(D)$ write~$f(x)\deq\tfrac{1}{d} \bigl(\bbp(X = x) - \tfrac{1}{k}\bigr)$. We then have
  \begin{equation} \nonumber
    \Var(X)
    = \sum_{x\in V} \left(\bbp(X=x)-\frac{1}{k}\right)^{\!2}
    = \sum_{x\in V} (df(x))^2
    = \sum_{y\in V}\, \sum_{x\in N_D^-(y)} \!\! df(x)^2,
  \end{equation}
where the final equality holds since $D$ is $d$-regular so each vertex $x$ appears in $N_D^-(y)$ for precisely $d$~vertices~$y$. Similarly, since each $x \in V$ has precisely $d$~outneighbours, for each $y \in V$ we have $\bbp(Y=y) = \sum_{x\in N_D^-(y)}  \tfrac{1}{d} \cdot \bbp(X=x)$, so
  \begin{equation}\label{e:uniformity_increases} \nonumber
    \Var(Y)
    = \sum_{y\in V} \left(\bbp(Y=y)-\frac{1}{k}\right)^{\!2}
    = \sum_{y\in V} \Biggl(\sum_{x\in N_D^-(y)}\frac{1}{d} \biggl(\bbp(X=x)-\frac{1}{k}\biggr)\Biggr)^{\!2}
    = \sum_{y\in V} \biggl(\sum_{x\in N_D^-(y)} \!\!\!\!f(x)\biggr)^{\!2}.
  \end{equation}
Combining these expressions we obtain
  \begin{align}
    \Var(X) - \Var(Y)
    &  =  \sum_{y\in V} \Biggl(\, \sum_{x\in N_D^-(y)} \!\! df(x)^2 - \biggl(\sum_{x\in N_D^-(y)} f(x)\biggr)^{\!2}\, \Biggr) \\
    &  =  \sum_{y\in V} \Biggl( \biggr( \sum_{z, w \in N_D^-(y)} \frac{1}{2}f(z)^2 + \frac{1}{2}f(w)^2\biggr) - \biggl(\sum_{z, w \in N_D^-(y)} f(z)f(w)\biggr) \Biggr),\nonumber\\
    &  =  \sum_{y\in V} \biggl( \sum_{z, w \in N_D^-(y)} \frac{1}{2}\bigl(f(z)-f(w)\bigr)^2 \biggr) \geq \frac{1}{2}\, \max_{\mathclap{\substack{y\in V\\z,w\in N_D^-(y)}}}\, \bigl(f(z) - f(w)\bigr)^2, \nonumber
  \end{align}
where the second equality holds since $|N^-_D(y)| = d$ for each $y \in V$, meaning that the term $\tfrac{1}{2} f(x)^2$ is counted precisely $2d$ times in the latter expression, whilst the final inequality holds simply because a sum of non-negative terms is at least as large as its maximum term.
Write $L \deq \max_{x \in V} \bigl|f(x)\bigr|$. We then have
\[
  \Var(X) - \Var(Y) \geq \frac{1}{2} \max_{\mathclap{\substack{y\in V\\z,w\in N_D^-(y)}}} \bigl(f(z) - f(w)\bigr)^2 \geq \frac{\max_{u, v \in V} \bigl(f(u) - f(v)\bigr)^2}{2(k-1)^2}  \geq \frac{L^2}{2(k-1)^2} \geq \frac{\Var(X)}{2k(k-1)^2d^2} \geq \frac{\Var(X)}{2k^5}.
\]
Indeed, the first inequality is~\eqref{e:uniformity_increases}, whilst the second holds by Lemma~\ref{l:mix-neighbours}. The third holds since $\sum_{x\in V} f(x) = 0$ and so $\max_{u, v \in V} (f(u) - f(v)) \geq L$, whilst the fourth holds since $\Var(X) = \sum_{x \in X} \bigl(df(x)\bigr)^2 \leq k d^2L^2$.

This completes the proof in the case where $Y$ is a uniformly-random outneighbour of $X$; the argument for~$Y$ being a uniformly-random inneighbour of $X$ is identical with the roles of inneighbours and outneighbours switched.
\end{proof}

\subsection{General allocation algorithm}
\label{s:gen-allocation-algo}
For the following algorithm, recall that if a tree $T$ is a component of a forest $F$\!, then the root of~$T$ appears prior to each other vertex of~$T$ in any ancestral order on the vertices of~$F$; together with the stipulation that the root of each component of~$F$ lies in $Z$, this ensures that we may always take $t_\sigma$ to be the parent of~$\ttau$ when required to do so. Note also that the algorithm makes arbitrary choices of~$\varphi(\ttau)$ for vertices $\ttau$ in $Z$. When we apply the algorithm later we will specify how these arbitrary choices should be made; the point of not specifying this here is that the results we prove in this section about Algorithm~\ref{a:alloc} hold no matter how we subsequently do this.

\smallskip
\begin{algorithm}[H]
  \SetAlgoVlined
  \caption{The Vertex Allocation Algorithm}\label{a:alloc}
  \DontPrintSemicolon \SetKwInOut{Input}{Input} \Input{an oriented
    forest~$F$\!, an ancestral
    order $t_1,\ldots,t_n$ of~$V(F)$, a digraph $D$ with
    $\delta^0(D)\geq 1$, a set $Z \subseteq V(F)$ which contains the root of each component of~$F$\!, a set $\cale \subseteq E(F)$, and for each $e \in \cale$ a permutation $\pi_e$ of~$V(F)$.}
  \DontPrintSemicolon \SetKwInOut{Output}{Output} \Output{a map $\varphi: V(F) \to V(D)$.}
  \For{$\tau = 1$ \KwTo $n$}{
    \lIf{$\ttau \in Z$}
    {choose $\varphi(\ttau)$ arbitrarily.}
    \Else
    { Let~$t_\sigma$ be the parent of~$\ttau$, let $x_\sigma=\varphi(t_\sigma)$ and let $e$ be the edge of~$F$ between $t_\sigma$ and $\ttau$.\;
      \lIf{$e \notin \cale$}{set $\varphi(\ttau)\deq \begin{cases} c_\sigma^+ & \mbox{ if $\ttau \in N_F^+(t_\sigma)$,} \\  c_\sigma^- & \mbox{ if $\ttau \in N_F^-(t_\sigma)$}\end{cases}$.}
      \lIf{$e \in \cale$}{set $\varphi(\ttau)\deq \begin{cases} \pi_e(x_\sigma) & \mbox{ if $\ttau \in N_F^+(t_\sigma)$,} \\  \pi_e^{-1}(x_\sigma) & \mbox{ if $\ttau \in N_F^-(t_\sigma)$.}\end{cases}$.}}
      Pick $c_\tau^+\in N_D^+(x_\tau)$ and $c_\tau^-\in N_D^-(x_\tau)$ uniformly at random independently of all previous choices.
      }
\end{algorithm}
\smallskip

Recall the proof outline of Theorem~\ref{t:treelike} in Section~\ref{s:outline}: we aim to allocate the vertices of a forest $F$ to the vertices of a reduced graph $R$ so that vertices in $F$ are allocated to a uniformly-random in- or out-neighbour of their parent (according to the direction of the corresponding edge), whilst some edges (those in chosen bare paths) should be allocated along a Hamilton cycle $1 \farc 2 \farc \cdots \farc k \farc 1$ in $R$. These edges will be contained in the set $\cale$ given as input to Algorithm~\ref{a:alloc}, and indeed moving one step on the Hamilton cycle is a permutation on the vertex set, as required. Also, the allocation of some vertices of~$F$ will be constrained by the existence of neighbours outside $F$ which have already been allocated; these vertices form the set $Z$ given as input to Algorithm~\ref{a:alloc}, and can be allocated appropriately at the point the algorithm arrives at them. Our aim at this point, then, is to show that so long as $Z$ is not too large, and there are no long paths entirely in~$\cale$, Algorithm~\ref{a:alloc} will yield an approximately-uniform allocation of vertices of~$F$ among the vertices of~$R$ (corresponding to clusters of~$G$). This is asserted by the main result of this section, Lemma~\ref{l:vtx-distribution}. Before that, we give a preliminary result describing how, for a long path $P$ in $F$ with no vertex from $Z$ (except possibly the initial vertex of~$P$) which contains many edges not in $\cale$, the allocation of the final vertex in $P$ by~Algorithm~\ref{a:alloc} is essentially independent of the allocation of the initial vertex of~$P$.

Let $F$ be a forest and let $\cale$ be a set of edges of~$F$. For vertices $u, v$ in the same component of~$F$\!, we define the $\cale$-distance from $u$ to $v$, denoted $\dist_\cale(u, v)$, to be the number of edges of~$\cale$ in the (unique) path in $F$ from $u$ to $v$ (actually we will mainly work with $\dist_{\overline{\cale}}(u,v)$ where $\overline{\cale}$ denotes the complement $E(F) \setminus \cale$).

\begin{proposition} \label{p:singlestep}
Let $R$ be a digraph with vertex set $[k]$ which is a $d$-regular expander. Let $F$ be an oriented forest with a fixed ancestral order, let $Z \subseteq V(F)$ be a set which contains the root of each component of~$F$\!, let $\cale$ be a set of edges of~$F$\!, and for each edge $e \in \cale$ let $\pi_e: [k] \to [k]$ be a permutation. Let $u_0, u_1, \dots, u_\ell$ be the vertices of a path in $F$ for which $u_0$ is an ancestor of~$u_\ell$, $\dist_{\overline{\cale}}(u_0, u_\ell) \geq 4k^5\log m$, and $u_i \notin Z$ for each $1 \leq i \leq \ell$. If we apply Algorithm~\ref{a:alloc} to obtain a map $\varphi : V(F) \to V(R)$
    then for all~$x,y\in V(D)$,
    \[
      \bbp\left(\,\varphi(u_\ell) = y \,\mid\, \varphi(u_0) = x\,\right)
      =\frac{1}{k}\pm \frac{1}{m}.
    \]
\end{proposition}

\begin{proof}
For each $0 \leq i \leq \ell$ let $X_i$ be the value of~$\varphi(u_i)$ conditioned on the event that $\varphi(u_0) = x$, so $X_i$ is a random vertex of~$D$. In particular, $X_0$ then takes value $x$ with probability $1$, so $\Var(X_0) = 1-1/k < 1$. Now consider some $i \in [\ell]$. If the edge $e$ of~$F$ between $u_{i-1}$ and $u_i$ is in $\cale$, then $X_i = \pi_e(X_{i-1})$ or $X_i = \pi^{-1}_e(X_{i-1})$ according to the direction of this edge; in either case we have $\Var(X_i) = \Var(X_{i-1})$. On the other hand, if the edge of~$F$ between $u_{i-1}$ and $u_i$ is not in $\cale$, then $X_i$ is a uniformly-random outneighbour or inneighbour of~$X_{i-1}$, again according to the direction of this edge. In either case we have $\Var(X_i) \leq (1-1/2k^5)\Var(X_{i-1})$ by Lemma~\ref{l:walk-on-expander}. We conclude that
\[
  \Var(X_\ell) \leq \left(1-\frac{1}{2k^5}\right)^{\dist_{\overline{\cale}}(u_0, u_\ell)} \Var(X_0) < \left(1-\frac{1}{2k^5}\right)^{\dist_{\overline{\cale}}(u_0, u_\ell)}.
\]
By definition of~$\Var$ we have $\bbp(X_\ell = y \mid \varphi(u_0) = x) = \tfrac{1}{k} \pm \sqrt{\Var(X_\ell)}$ for each $y \in V(D)$, so
\begin{align*}
      \bbp \left( \,\varphi(u_\ell) = y \, \mid \, \varphi(u_0) = x \, \right)
      &= \frac{1}{k} \pm \left(1-\frac{1}{2k^5}\right)^{\dist_{\overline{\cale}}(u_0, u_\ell)/2}
      = \frac 1k \pm \left(1-\frac{1}{2k^5}\right)^{2k^5 \log m}
      = \frac 1k \pm \frac 1{m},
\end{align*}
where the final inequality holds by the standard inequality $(1-x) \leq \ee^{-x}$.
\end{proof}

\begin{lemma}\label{l:vtx-distribution}
Suppose that $\cramped{\frac{1}{n}} \ll \cramped{\frac{1}{k}}$. Let $R$ be a digraph with vertex set $[k]$ which is a $d$-regular expander. Let $F$ be an oriented forest on $n$ vertices with a fixed ancestral order and let $Z \subseteq V(F)$ be a set which contains the root of each component of~$F$. Let $\cale$ be a set of edges of~$F$ which does not contain a path of length 7, and for each edge $e \in \cale$ let $\pi_e: [k] \to [k]$ be a permutation. If we apply Algorithm~\ref{a:alloc} to obtain a map $\varphi : V(F) \to V(R)$, then for each set $S \subseteq V(F)$,
 with high probability we have for each $i \in [k]$ that
 \[
   |\varphi^{-1}(i) \cap S| = \frac{|S|}{k}  \pm \left(\frac{n}{\log n} + (|Z|+3n^{1/3})\Delta(F)^{56k^5\log \log n}\right).
 \]
\end{lemma}

\begin{proof}
We may assume that $|S| \geq n/\log n$ as otherwise there is nothing to prove.

By Corollary~\ref{c:treepieces} we may choose a set $Y \subseteq V(F)$ with $|Y| \leq 3n^{1/3}$ such that every component of~$F - Y$ has size at most $n^{2/3}$.
Write $Z^* \deq Y \cup Z$, so $|Z^*| \leq |Z| + 3n^{1/3}$. Also let $T_1, \dots, T_s$ be the components of~$F - Z^*$, so $|T_i| \leq n^{2/3}$ for each $i \in [s]$, and for each $i \in [s]$ let $z_i$ be the nearest ancestor in~$Z^*$ of~vertices in~$T_i$; note that $z_i$ exists since the root of each component of~$F$ is in $Z$, and $z_i$ is well-defined since each vertex in $T_i$ has the same nearest ancestor in~$Z^*$\!. Moreover, choose the indices of the $T_i$ and $z_i$ so that if $z_i$ is an ancestor of~$z_j$ then $i < j$ (this can be achieved by having the order $z_1, z_2, \dots, z_s$ be the restriction of the ancestral order on $F$ to the vertices $z_i$). Let $B$ be the set of all vertices $z \in V(F)$ for which there exists $x \in Z^*$ which is an ancestor of~$z$ with $\dist_F(x, z) \leq 56k^5 \log \log n$, so $|B| \leq |Z^*|\Delta(F)^{56k^5 \log \log n}$. For each $i \in [s]$ set $F_i \deq V(T_i) \setminus B$ for each $i \in [s]$. So the sets $F_1, \dots, F_s$ are pairwise disjoint subsets of~$V(F)$ with the following properties.
  \begin{enumerate}
  \item \label{i:Fi-i}
    $\bigl|\bigcup_{i\in[s]} F_i\bigr|\geq n-(|Z|+3n^{1/3})\Delta(F)^{56k^5\log \log n}$, since every vertex of~$F$ is in some $F_i$ except for the vertices in $B$.
  \item \label{i:Fi-ii}
    $|F_i| \leq |T_i| \leq n^{2/3}$ for each~$i\in[s]$.
  \item \label{i:Fi-iii}
    For each~$i\in[s]$,
    each~$x \in \bigcup_{j<i}V(F_j)$, and each~$y\in F_i$, either there is no path from~$x$ to~$y$ in~$F$\!, or the path from $x$ to $y$ in $F$ includes~$z_i$.
  \item \label{i:Fi-iv}
    For any~$i \in [s]$ and~$y\in F_i$ we
    have~$\dist_F(z_i,y)\geq 56k^5\log \log n$.
  \end{enumerate}
Define random variables~$X_i^j$ for each~$i\in[s]$ and~$j\in[k]$ by
\[
  X_i^j \deq \frac{|\varphi^{-1}(j) \cap F_i \cap S|}{n^{2/3}}
\]
so $X_i^j$ is the number of vertices of~$F_i \cap S$ allocated to cluster $j$, normalised by $n^{2/3}$ so that, by~\ref{i:Fi-ii},
each~$X_i^j$ lies in the range~$[0,1]$.
Observe, crucially, that~\ref{i:Fi-iii} implies that for each $i \in [s]$ the allocation of vertices in~$F_i$
  conditioned on the value of~$\varphi(z_i)$ is independent of $\{\varphi(w) : w \in \bigcup_{j<i}F_j\}$. Indeed, for any fixed value of $\varphi(z_i)$ the value of $\varphi(x)$ for $x \in F_i$ depends only on the random choices made by Algorithm~\ref{a:alloc} for descendants of $z_i$ on the path between $z_i$ and $x$, none of which are in $Z^*$ by choice of $z_i$, and each of these choices are independent of all other choices made by the algorithm.
  Hence, for each~$q \in [k]$,
  we have~$\bbe(\,X_i^j \mid X_\imm^j,\ldots,X_1^j, \varphi(z_i) = q\,) = \bbe(\,X_i^j\mid \varphi(z_i) = q\,)$.
It follows that for every~$i \in [s]$ and~$\jnk$ we have
  \begin{align*}
    \bbe(\,X_i^j \mid X_\imm^j,\ldots,X_1^j\,)
    &\leq
    \max_{q\in[k]} \,\bbe(\,X_i^j \mid X_\imm^j,\ldots,X_1^j, \varphi(z_i) = q\,)
    =
    \max_{q\in[k]} \,\bbe(\,X_i^j \mid \varphi(z_i) = q\,)\\
    &=
    \max_{q\in[k]} \frac{\sum_{x\in F_i \cap S} \bbp(\,x\in V_j\mid \varphi(z_i) = q\,)}{n^{2/3}}
    \leq \frac{1}{k} \left(1 +\frac{1}{2 \log n}\right)\frac{|F_i\cap S|}{n^{2/3}}.
  \end{align*}
  To see that the final inequality holds, note that by~\ref{i:Fi-iv} we have $\dist_F(z_i, y) \geq 56k^5 \log \log n$; since $\cale$ does not contain a path of length seven, at least a seventh of the edges on the path between $z_i$ and $y$ are not in~$\cale$, and it follows that $\dist_{\overline{\cale}}(z_i, y) \geq 8k^5 \log \log n \geq 4k^5 \log (2k \log n)$. So we may apply Proposition~\ref{p:singlestep} with~$2k \log n$ in place of $m$, giving the desired inequality.

  We apply Lemma~\ref{l:martingale} with
  \begin{align*}
    \mu
    \deq \frac{1}{k} \left(1+\frac{1}{2 \log n}\right) \frac{|S|}{n^{2/3}}
    & \geq \frac{1}{k} \left(1+\frac{1}{2 \log n}\right) \sum_{i\in[s]}
      \frac{|F_i\cap S|}{n^{2/3}},
\end{align*}
which (since~$\cramped{\frac{1}{n}}\ll \cramped{\frac{1}{k}}$) yields
    \begin{align*}
    \bbp\Bigl(\sum_{i\in[s]} X_i^j > \Bigl(1+\frac{1}{3\log n}\Bigr)\mu\Bigr)
    & \leq   \exp\left(\frac{-\mu}{27 (\log n)^2}\right)
      \leq      \exp \left(-\frac{|S|}{27kn^{2/3}(\log n)^2}\right)
      \leq   \exp\bigl(-n^{1/4}\bigr).
  \end{align*}
Taking a union bound we find that with high probability, for every $i \in [s]$ and $j \in [k]$ the event described does not occur, implying that for every $j \in [k]$ we have
  \begin{align*}
    n^{2/3}\sum_{i\in[s]} X_i^j
    \leq n^{2/3}\left(1+\frac{1}{3 \log n}\right)\mu
    \leq \frac{|S|}{k} \left(1 + \frac{1}{\log n}\right).
  \end{align*}
In other words, for each $j \in [k]$ we have $\bigl|\varphi^{-1}(j) \cap S \cap \bigcup_{i \in [s]} F_i\bigr| \leq \frac{|S|}{k} \left(1 + \frac{1}{\log n}\right)$. Since every vertex is in $\varphi^{-1}(j)$ for precisely one $j \in [k]$, it follows that for each $j \in [k]$ we have
\begin{align*}
  \frac{|S|}{k} \left(1 + \frac{1}{\log n}\right) + \Bigl|S \setminus \bigcup_{i \in [s]} F_i\Bigr| \geq \bigl|\varphi^{-1}(j) \cap S\bigr|
  & \geq \Bigl|S \cap \bigcup_{i \in [s]} F_i\Bigr| - \frac{(k-1)|S|}{k} \left(1 + \frac{1}{\log n}\right)\\
  & \geq \frac{|S|}{k} - \frac{|S|}{\log n} - \left|S \setminus \bigcup_{i \in [s]} F_i\right|,
\end{align*}
and together with~\ref{i:Fi-i} this gives the desired conclusion.
\end{proof}

\section{Proof of Theorem~\ref{t:treelike}}
\label{s:treelike}

Note that Theorem~\ref{t:treelike}\,\ref{i:treelike-quasi-spanning}
follows from Theorem~\ref{t:treelike}\,\ref{i:treelike-spanning}
by appending to $Q$ a directed path of order $|G| - n \geq \alpha n$
joined to $Q$ by a single edge (yielding a graph~$Q'$ of order~$|V(G)|$),
and adjusting the remaining constants accordingly.
Hence, it suffices to prove Theorem~\ref{t:treelike}\,\ref{i:treelike-spanning}. We do this by establishing the following more general result (to see that this implies Theorem~\ref{t:treelike}\,\ref{i:treelike-spanning}, observe that the fact that every edge of $Q_0$ is subdivided at least once implies that $Q$ is 2-degenerate, and deleting the vertices of $Q_0$ from $Q$ yields a $1$-degenerate subgraph).

\begin{theorem}\label{t:main-degen}
Suppose $\cramped{\frac{1}{n}}\ll\lambda\ll\alpha$.
Let $Q$ be a $2$-degenerate graph of order~$n$
with maximum degree $\Delta(Q)\le \exp(\sqrt{\log n})$,
and suppose that $Q$ can be made $1$-degenerate
by deleting from it at most~$n^{0.99}$~vertices.
If $Q$~contains either $\lambda n$~pairwise vertex-disjoint bare paths of order 7 or
$\lambda n$~pairwise disjoint edges incident to~leaves,
then every orientation of
$Q$ is contained in every directed graph $G$ of order $n$
with $\delta^0(G)\ge (1/2+\alpha)n$.
\end{theorem}

To prove Theorem~\ref{t:main-degen}, introduce new constants~$K, K', \eps,\eps',\gamma,\beta,d,\eta$
with
\[
  \frac{1}{n}    \ll  \frac{1}{K} \ll \frac{1}{K'}     \ll
  \eps         \ll  \eps'    \ll
  \gamma       \ll   \beta         \ll  d
  \ll  \lambda  \ll  \eta          \ll  \alpha .
\]
Let $\calp_{\mathrm{undir}}$ be a collection of $\lambda n$ pairwise vertex-disjoint paths in $Q$, where either each $P \in \calp_{\mathrm{undir}}$ is a bare path of order 7, or each $P \in \calp_{\mathrm{undir}}$ is a single edge (i.e., a path of length 1) one of whose endvertices is a leaf. Fix an arbitrary orientation of $Q$; our goal is then to construct an embedding~$\varrho$ of~$Q$ into~$G$.

\subsection{Anatomy of the treelike structure}
\label{s:main-proof/anatomy-of-Q}
We assume without loss of generality that at most one component of $Q$ is a tree. We may do this because if more than one component of $Q$ is a tree, then by Proposition~\ref{p:addedges} we may add edges to $Q$ to connect all these trees into a single tree without affecting the conditions on $Q$ in the statement of the theorem.

Let $Q_0$ be a set of at most $n^{0.99}$ vertices of $Q$
whose deletion turns $Q$ into a $1$-degenerate graph.
In other words, every component of $Q-Q_0$ is a tree;
for each such tree~$T$, let $A_T$ be the set of attachments of~$T$ in~$Q$
(these are vertices of~$T$ with neighbours in~$Q_0$).
Observe that the total number of attachments, over all components, is then $|\bigcup_T A_T| \leq |Q_0| \Delta(Q)$, where the union is taken over all
components $T$ of~$Q-Q_0$. Also, our previous assumption implies that at most one tree has no attachments.

We apply Proposition~\ref{p:treeset} to each component~$T$ of $Q-Q_0$, with $A_T$ in place of $X$;
this yields a set $Y_T \subseteq V(T)$ with~$|Y_T|\le \max(6\bigl|A_T\bigr|, 1)$ such that $A_T \subseteq Y_T$, so that each component of $T - Y_T$ contains at most $|T|/2 \leq n/2$ vertices, and such that each component $T'$ of~$T - Y_T$ has either one or two attachments in $T$ (these are vertices with neighbours in~$Y_T$), each of which has only one neighbour in $Y_T$ (and hence in $V(Q)\setminus V(T)$), with the additional property that if $T'$ has two attachments in $T$ then these are not adjacent.

We set $\Vground \deq Q_0 \cup \bigcup_{T} Y_T$, where the union is taken over all
components $T$ of~$Q-Q_0$. So
\begin{equation}\label{e:|A_0|}
  |\Vground| \le |Q_0| + \Bigl|\bigcup_T Y_T\Bigr| \leq |Q_0| + 6\Bigl|\bigcup_T A_T\Bigr| + 1 \leq |Q_0| (1 + 6\Delta(Q)) + 1
  < 7 n^{0.99}\exp(\sqrt{\log n}) \leq n^{0.995}.
\end{equation}

We also set $F = Q - \Vground$, so $F$ is a forest, and let $\calt$ be the set of components of $F$. So each $T \in \calt$ is an oriented tree with $|T| \leq n/2$, and our choice of $\Vground$ ensures that each $T \in \calt$ has either one or two attachments in $Q$ (these are vertices with a neighbour in $\Vground$), that each attachment has at most one neighbour in $\Vground$, and that if $T$ has two attachments, then these vertices are not adjacent in~$T$.
Moreover, by~\eqref{e:|A_0|} we have
\begin{equation}\label{e:|calt|}
  |\calt|
  \le |\Vground|\Delta(Q) < n^{0.995}\exp(\sqrt{\log n}) \leq n^{0.999}.
\end{equation}

For each $T \in \calt$ fix $r^T$ as the root of $T$.
We may then fix a tidy ancestral order $\prec$ of the forest $F$ by Lemma~\ref{l:tidy}.
So in particular, $r^T$ appears before any other vertex of $T$ in the order $\prec$.
If $T$ has two attachments in $Q$ then we refer to the other attachment
as the \defi{secondary attachment}~$s^T$ of~$T$.
Let ${\hat r}^T$ (respectively, ${\hat s}^T$)
denote the sole neighbour of~$r^T$ (respectively, $s^T$)
which lies outside of~$T$, and
let~$p^T$ denote the parent of~$s^T$ in~$T$
(where we consider $T$~rooted at~$r^T$). So the vertices $r^T, p^T, s^T, {\hat r}^T$
and ${\hat s}^T$ are all distinct.
We partition~$\calt$ into sets $\calt_1$ and $\calt_2$ such that
\[n/3 \leq \Bigl|\bigcup_{T \in \calt_1} V(T)\Bigr|, \Bigl|\bigcup_{T \in \calt_2} V(T)\Bigr| \leq 2n/3,\]
which is possible since $|T| \leq n/2$ for every $T \in \calt$.
Call a path $P\in\calp_{\mathrm{undir}}$ \defi{unfit} if $P$ either
\begin{enumerate}
\item contains a vertex of $\Vground$, or
\item contains either $p^T,\,r^T$ or $s^T$ for some $T\in\calt$.
\end{enumerate}
We call the remaining paths in~$\calp_{\mathrm{undir}}$~\defi{fit}.
Observe that~$|\calp_{\mathrm{undir}}|= \lambda n$, and $\calp_{\mathrm{undir}}$ contains
at most $|\Vground| + 3|\calt|\leq \lambda n/3$ unfit paths.
 So by relabelling $\calt_1$ and $\calt_2$ if necessary, we may assume that there are at least $\lambda n/3$ fit paths in $\calp_{\mathrm{undir}}$ which are entirely contained in trees in~$\calt_1$.
Furthermore, considering the fixed roots $r^T$ chosen for each $T \in \calt$, each such path $P$ has a well-defined pattern (as defined in Section~\ref{s:diamonds}, with respect to the root $r^T$ of the tree $T$ which contains $P$). Since there are at most $2^6$ possible patterns for the orientation of a rooted path $P \in \calp_{\mathrm{undir}}$,
we may choose a set $\calp \subseteq \calp_{\mathrm{undir}}$ with $|\calp| =
\lambda n/2^8$ such that every $P \in \calp$ is fit, lies in some $T \in \calt_1$, and has the same pattern $\hat P$. For each $P \in \calp$ let $v_1^P, \dots, v_7^P$ be the vertices of $P$, labelled with $v_1^P \prec v_2^P \prec \dots \prec v_7^P$.

\subsection{Reduced graph}
\label{s:reduced-graph}

We next construct a regular partition of~$G$,
which plays a crucial role in the allocation and embedding phases.
We apply Lemma~\ref{l:reduced-graph} to $G$ to obtain an integer $k$ with $K' \leq k \leq K$, a partition
$V_0\dcup V_1\dcup\cdots\dcup V_k$ of $V(G)$ and a digraph~\rstar\
with~$V(\rstar)=V_0\dcup[k]$
satisfying properties~\ref{rg-i}--\ref{rg-rstar-deg-ii} (in particular, $m$ is defined to be the common size of the clusters $V_1, \dots, V_k$, and satisfies $(1-\eps) n/k \leq m \leq n/k$).
Note that since $|V_0| \leq \eps n$ we have
\begin{equation}\label{e:semideg-G-V_0}
  \delta^0(G-V_0)
  \ge  (1/2+\alpha)n - |V_0|
  \ge  (1/2+\alpha/2)n.
\end{equation}
By Lemma~\ref{l:many-paths/diamonds} applied to $\rstar[[k]]$,
there is a connecting set $\cald$ of $\hat P$-diamonds
in $\rstar[[k]]$ with $|\cald| = k-1$ such that each $\ink$
lies in at most $4/\eta$ diamonds in $\cald$.
Let $H^\diamond=\bigcup_{\Diamond\in\cald} \Diamond$, and
let $H\subseteq \rstar[[k]]$ be the Hamilton cycle $1\rarr 2\rarr\cdots\rarr k\rarr 1$ (recall that Lemma~\ref{l:reduced-graph} guarantees that this is indeed a cycle in $\rstar[[k]]$).
By~Lemma~\ref{l:reg-expander},
$\rstar[[k]]$ contains a spanning $d_J$-regular expander~$J$,
with~$H \subseteq J$.

\subsection{Allocation}
\label{s:main-proof/allocation}

The next step in the proof is to allocate vertices to clusters
(vertices of $\Vground$ will in fact be embedded at this step).
To do this, we fix an embedding~$\varrho$ of the vertices in~$\Vground$ and let $\varphi_0$ be the associated allocation.
We then choose an allocation~$\varphi_{\mathrm{root}}$ for the roots of each $T\in\calt$,
and fix a set~$I(T)\subseteq [k]$ of candidate allocations
for each secondary attachment~$s^T$. Finally
the allocation of~$Q$ is completed using Claim~\ref{cl:allocate}
(proved in Section~\ref{s:main-proof/proof-cl:allocate}).

Since $Q$ is $2$-degenerate, the same is true of $Q[\Vground]$, so the small size of $\Vground$
allows us to greedily embed its vertices to~$G-V_0$
as follows.
Fix an ordering $r_1,r_2,\dots, r_t$ of the vertices in~$\Vground$
such that for each $i \in [t]$ the vertex $r_i$ has at most two neighbours in $\{r_1,\ldots,r_{i-1}\}$, and fix the image $\varrho(r_1)$ as some arbitrary vertex in~$V(G)\setminus V_0$.
By~\eqref{e:|A_0|} and~\eqref{e:semideg-G-V_0}, for all $\bullet,\diamond\in\{-,+\}$ and all $x,y\in V(G)$,
we have
\[
  \bigl|(N_G^\bullet(x)\cap N^\diamond(y))\setminus V_0\bigr|\ge \alpha n - \eps n > |\Vground|,
\]
and hence for each $1< i \le t$ there is an appropriate choice for the image $\varrho(r_i)$
among the vertices of $V(G)\setminus V_0$
which are not yet in the image of $\varrho$. For each $x \in \Vground$, having fixed $\varrho(x)$, set $\varphi_0(x)$ so that $\varrho(x) \in V_{\varphi_0(x)}$; this means that each vertex of $\Vground$ is allocated to the cluster to which it is embedded.

Let us now allocate the roots $r^T$\! of each $T \in \calt$.
For each $v\in \Vground$, and each $\bullet\in\{-,+\}$, by~\eqref{e:semideg-G-V_0} we may choose $i(v,\bullet) \in[k]$ such that
\begin{equation}\label{e:allocation-r^T}
  \deg_G^\bullet(\varrho(v), V_{i(v,\bullet)})
     \ge (1 + \alpha)m/2.
\end{equation}
For each $T\in\calt$,
set $\varphi_{\mathrm{root}}(r^T)=i(\hat r^T, \bullet)$ where $r^T\in N^\bullet(\hat r^T)$.
Moreover,
for each~$T\in\calt$ with a secondary attachment~$s^T\in N^\bullet(\hat s^T)$, recall that ${\hat s}^T$ is the unique neighbour of~$s^T$ in~$\Vground$, so $\varrho({\hat s}^T)$ has been defined. Let~$I(T)$ be the set of $j\in [k]$ such that
\begin{align}\label{e:allocation-s^T}
  \deg_G^\bullet(\varrho({\hat s}^T), V_j) \ge \eta m.
\end{align}
Since each cluster has at most~$m \leq n/k$ vertices, we then have
\begin{equation}\label{e:|I(T)|}
  |I(T)|\ge (1/2+\eta)k,
\end{equation}
since otherwise $\deg_{G-V_0}^\bullet(\varrho({\hat s}^T)) < \eta m \cdot k + m \cdot (1/2+\eta)k \leq (1/2 + 2 \eta)n$, contradicting~\eqref{e:semideg-G-V_0}.
We now allocate all remaining vertices of $Q$ through the following claim (proved in Section~\ref{s:main-proof/proof-cl:allocate}), for which we define

\[
  g \deq \left\lceil \frac{\lambda m}{2^{10}} \right\rceil
\]

\begin{claim}\label{cl:allocate}
  There exist disjoint~$\calp^0,\calp^H\subseteq\calp$
  and an extension of~$\varphi_0$
  to a map~$\varphi:V(Q)\to V(\rstar)$ such that
  \begin{enumerate}
  \item \label{l:tl/phi-deg}
  for each $T \in \calt$ the restriction $\varphi^T$ of $\varphi$ to $T$
    is a homomorphism from~$T$ to~$\rstar$
    with~$\Delta(\varphi^T)\leq 5$;

 \item \label{l:tl/a-V0-paths}
   $|\calp^0|=|V_0|$, and the restriction of $\varphi$ to either the centres $v_4^P$ of paths $P\in\calp^0$ (if these paths have order~7) or the non-root vertices $v_2^P$ of paths $P\in\calp^0$ (if these paths have order~$2$, in which case $v_2^P$ is a leaf of $Q$) is a bijection from that set to~$V_0$;

  \item \label{l:tl/a-V0-neigh}
    for each $\ink$ we have $|\varphi^{-1}(i)\cap N|\le 6\eps m/\alpha$, where $N \deq \bigcup_{x \in V(Q) : \varphi(x) \in V_0} N_Q^-(x)\cup N_Q^+(x)$;

  \item \label{l:tl/a-i}
    $\varphi$ maps precisely $m$~vertices to each $\ink$;

  \item \label{l:tl/a-ii}
    for each $P\in\calp^H$, the restriction of $\varphi$ to $P$
    is a homomorphism from $P$ to~$H$;

  \item \label{l:tl/a-iii}
    $\varphi$ maps
    precisely $g$ roots $v_1^P$ of paths in~$\calp^H$
    to each $i\in[k]$;

  \item\label{l:tl/a-st}
  For each $T \in \calt$ we have $\varphi(r^T) = \varphi_{\mathrm{root}}(r^T)$ and, if $T$ has a secondary attachment $s^T$\!, then $\varphi(s^T)\in I(T)$.
  \end{enumerate}
\end{claim}

\subsection{Embedding}
\label{s:embedding}
Fix $\calp^0\!$,\, $\calp^H$ and an allocation $\varphi$ extending $\varphi_0$ as obtained from Claim~\ref{cl:allocate}, and
let $M$ be the set of pendant vertices of paths in~$\calp^H$, so $M \deq \{v_2^P : P \in \calp^H\}$ if paths in $\calp$ have order 2, and $M \deq \{v_2^P, v_3^P, v_4^P, v_5^P, v_6^P: P \in \calp^H\}$ if the paths in $\calp$ have order 7.
Also say that $v\in V(Q)$ is a \defi{distinguished vertex} if $\varphi(v)\in V_0$.
For each distinguished vertex $v \in V(Q)$ set $\varrho(v) = \varphi(v)$.
So $\varrho$ now embeds all vertices in $\Vground$ and all distinguished vertices.
The bulk of the embedding is achieved by the following claim
(proved in Section~\ref{s:main-proof/proof-cl:embedding}).

\begin{claim}\label{cl:embed}
  There exists an extension of $\varrho$
  to an embedding of~$Q-M$ in $G$
  such that properties~\ref{i:respects-alloc}--\ref{i:sreg} below hold. For each \ink, write
  \begin{align*}
    U_i
    &  \deq \{\,\varrho(u)\in V_i:\text{$u = v_1^P$ for some $P\in\calp^H$}\,\},\\
    W_i
    &  \deq \{\,\varrho(w)\in V_i:\text{$w = v^7_P$ for some $P\in\calp^H$}\,\}\text{, and}\\
    V_i^\star
    & \deq V_i\setminus\varrho(Q-M).
  \end{align*}
  \begin{enumerate}
  \item\label{i:respects-alloc}
    The embedding $\varrho$ respects the allocation, meaning that
    $\varrho(x)=\varphi(x)$ if $\varphi(x)\in V_0$ and $\varrho(x)\in V_{\varphi(x)}$
    for all other $x\in V(Q)\setminus M$.
  \item \label{cl:embed_2path} If the paths in $\calp$ have order two then for each $i \in [k]$ we have $|U_i| = |V_i^*| = g$ and $W_i = \emptyset$.
  \item \label{cl:embed_7path} If the paths in $\calp$ have order seven then for each $i \in [k]$ we have $|U_i| = |W_i| = g$ and $|V_i^*| = 5g$.
  \item \label{i:sreg}
    For each \ink\
    the graphs $G[V_{\imm}^\star\rarr U_i]$,  $G[U_i \rarr V_{\ipp}^\star]$, $G[V_{\imm}^\star\rarr W_i]$,  $G[W_i\rarr V_{\ipp}^\star]$
    and $G[V_{\imm}^\star\rarr V_i^\star]$
    are each $(\beta\!,\eps')$-superregular.
  \end{enumerate}
\end{claim}

Fix such an (extended) embedding~$\varrho$ of $Q-M$ in $G$.
We complete~$\varrho$ to an embedding of $Q$ in $G$ by
defining the images of vertices in $M$, which are the pendant vertices of paths in~$\calp^H$,
as follows.

Suppose first that the paths in $\calp$ each have order~2, in which case they are pairwise-disjoint edges incident to leaves. So each path $P \in \calp^H$ has a root vertex~$v_1^P$ and another vertex $v_2^P$ which is a leaf vertex of~$Q$, and either $v_2^P$ is an outneighbour of $v_1^P$ for every $P \in \calp^H$ or $v_2^P$ is an inneighbour of $v_1^P$ for every $P \in \calp^H$.
We assume the former; the argument for the latter case is similar.
For each $i \in [k]$ we have $|U_i|=|V_\ipp^\star| = g$ by Claim~\ref{cl:embed}\ref{cl:embed_2path}; since $G[U_i\rarr V_\ipp^\star]$ is
$(\beta\!,\eps')$-superregular by Claim~\ref{cl:embed}\,\ref{i:sreg}, it follows that $G[U_i\rarr V_\ipp^\star]$ contains a perfect matching $M_i$ by Lemma~\ref{l:matching}. For each $P \in \calp^H$ with $v_1(P) \in U_i$ set $\varrho(v_2^P)$ to be the vertex in $V_\ipp^\star$ which $M_i$ matches to $\varrho(v_1^P)$; doing this for each $\ink$ gives the desired embedding $\varrho$ of $Q$ in $G$.

Now suppose instead that the paths in~$\calp$ each have order~$7$. For each \ink, let $Z_i^1=U_i$ and $Z_i^7= W_i$, and choose uniformly at random an equipartition of $V_i^\star$ into five sets $Z_i^j$ for $j \in [2,3,4,5,6]$; the choice for each $i \in [k]$ is independent of all others. So by Claim~\ref{cl:embed}\ref{cl:embed_7path} we have $|Z_i^j| = g$ for each $i \in [k]$ and $j \in [7]$. Moreover, for each $j \in \{2,3,4,5,6\}$ the set $Z_i^j$ is a uniformly-random subset of $V^\star_i$ of size $|V^\star_i|/5$, whilst tautologically $Z_i^1$ is a uniformly-random subset of $U_i$ of size $|U_i|$ and $Z_i^7$ is a uniformly-random subset of $W_i$ of size $|W_i|$. Using Claim~\ref{cl:embed}\ref{i:sreg} we may apply Lemma~\ref{l:sliceSRpair} and take a union bound over both events for each $i \in [k]$ and $\ell \in [6]$ to find that, with positive probability, for each \ink\ and each $\ell\in[6]$
the graphs
\begin{equation}\label{e:long-path-superreg}
  G[ Z_\imm^{\ell+1} \rarr Z_i^\ell]
  \tand
  G[ Z_i^\ell \rarr Z_\ipp^{\ell+1}]
  \text{ are each $(\beta,5\eps')$-superregular,}
\end{equation}
with addition on the indices taken modulo $k$. Fix an outcome for which each of these events occurs.

Recall that all paths in $\calp$ have the same pattern $\hat{P}$.
Let $u_1,\ldots,u_7$ be the vertices of~$\hat{P}$,
ordered as they appear in $\hat{P}$ with root $u_1$.
For each $\ell\in[7]$, let $\Sigma(\ell)$ denote the difference
between the number of forward and backward edges in the subpath of~$P$
from $u_1^P$ up to~$u_\ell^P$. In other words, we define
\[
  \Sigma(\ell) \deq
  \bigl|\{j\in[\ell -1] : u_j\rarr u_{j+1} \in E(\hat{P})\}\bigr|
  -  \bigl|\{j\in[\ell -1] : u_j\larr x_{j+1} \in E(\hat{P})\}\bigr|,
\]
and remark that $\Sigma(1)=0$.
With this definition,~\eqref{e:long-path-superreg} implies
that for each $\ell\in[6]$ and $\ink$ the sets $Z_{i+\Sigma(\ell)}^\ell$ and $Z_{i+\Sigma(\ell+1)}^{\ell+1}$
form a $(\beta\!,5\eps')$-superregular  pair in the direction of the edge between
$u_\ell$ and $u_{\ell+1}$.

Fix \ink\ and let $\calp_i^H \deq\{P\in\calp^H:\varrho(v_1^P)\in V_i\}$.
So each $P \in \calp_i^H$ has $\varrho(v_1^P)\in U_i = Z^1_i$ and also
$\varrho(v_7^P) \in W_{i+\Sigma(7)} = Z^7_{i+\Sigma(7)}$ by
Claim~\ref{cl:allocate}\ref{l:tl/a-ii} and Claim~\ref{cl:embed}\ref{i:respects-alloc},
while $v_2^P, \dots, v_6^P$ remain to be embedded. Let $\pi: Z_i^1 \to Z_{i+\Sigma(7)}^7$
be the bijection with $\pi(\varrho(v_1^P)) = \varrho(v_7^P)$ for each $P \in \calp_i^H$, and let
$L_i$ be the $7$-layer graph with vertex classes $Z_{i+\Sigma(j)}^j$ for $j \in [7]$. By applying
Lemma~\ref{l:four-layered} to $L_i$, we obtain a collection $\calp'_i$ of $g$ pairwise
vertex-disjoint paths of order 7 in $L_i$ such that for each $P \in \calp_i^H$ there is
a path $P' \in \calp'_i$ with ends $\varrho(v_1^P)$ and $\varrho(v_7^P)$. Let the
vertices of this path be $x_1^P, x_2^P, \dots, x_7^P$ in that order, so $x_1^P = \varrho(v_1^P)$, $x_7^P = \varrho(v_7^P)$ and $x_j^P \in Z_{i+\Sigma(j)}^j$ for each $j \in [7]$. Set $\varrho(v_j^P) = x_j^P$ for
each $P \in \calp_i^H$ and $j \in \{2,3,4,5,6\}$; doing this for each \ink~gives the desired embedding $\varrho$ of $Q$ in $G$.

This completes the proof of Theorem~\ref{t:treelike} except for the proofs
of Claims~\ref{cl:allocate} and~\ref{cl:embed}, which are contained in the next two sections.

\subsection{Proof of Claim~\ref{cl:allocate}}
\label{s:main-proof/proof-cl:allocate}
Let $F_1$ and $F_2$ be the forests formed by the trees in $\calt_1$
and~$\calt_2$ respectively. The allocation proceeds in two phases.
Roughly speaking, in the first phase
we allocate~$F_1$ using Algorithm~\ref{a:alloc},
enforcing that the root of each $T \in \calt$ is mapped in accordance with $\varphi_{\mathrm{root}}$ and
that each secondary attachment is mapped within a set $V_i$ with~$i\in I(T)$;
we also ensure that paths in $\calp$ are mapped along~$H$.
To conclude this phase, we modify the allocation of some
paths in~$\calp$, re-routing them so as to go through
every vertex in~$V_0$ and
through the connecting set~$\cald$ of diamonds in~$\rstar[[k]]$.
The resulting allocation~$\varphi_1$ will map precisely one vertex of $F_1$ to each vertex of $V_0$,
and will map the remaining vertices of $F_1$
approximately uniformly among $\ink$,
so that
$\bigl|\,\bigl|\varphi_1^{-1}(i)\bigr|-\bigl|\varphi_1^{-1}(j)\bigr|\,\bigr| \leq 10 \eps n/(\eta k)$
for all $i,j\in[k]$.

In the second phase, we build an allocation $\varphi_2$ of~$F_2$
using a biased allocation algorithm (Algorithm~\ref{a:alloc}
is not really modified, but we apply it to an auxiliary digraph,
which will produce the desired bias). This ensures that
the combination $\varphi_{\mathrm{join}}$ of $\varphi_0$, $\varphi_1$ and $\varphi_2$
will map~$V(Q)$ much more uniformly over~$[k]$.
In particular, for all $i,j\in[k]$ we shall have $\bigl|\,\bigl|\varphi_{\mathrm{join}}^{-1}(i)\bigr|-\bigl|\varphi_{\mathrm{join}}^{-1}(j)\bigr|\,\bigr| < 4 n \log \log n / \log n$.
Finally we complete the proof by modifying the allocation of paths routed through
diamonds to obtain a perfectly uniform allocation $\varphi$ with the desired properties.

For each path $P\in\calp$, given an allocation~$\pi$ of the root~$v_1^P$ of $P$,
a \defi{canonical} allocation of~$P$ is a homomorphism $\pi$ from $P$
into $H$ which extends~$\pi$.

\smallskip

\noindent\textbf{The first phase.}
Define permutations $\pi^+\colon[k]\to[k]$ and $\pi^-\colon[k]\to[k]$ such that $\pi^+(i) = i+1$ and $\pi^-(i) = i-1$ for each $\ink$, with addition and subtraction taken modulo $k$. So $\pi^+$ maps each $\ink$ to its outneighbour in $H$, and likewise $\pi^-$ maps each $\ink$ to its inneighbour in $H$.
Let $\cale \deq \bigcup_{\,P\in\calp\,} E(P)$, and for each edge $e \in \cale$ joining
$v_\ell^P$ and $v_{\ell +1}^P$,
define $\pi_e$ by
\[
  \pi_e \deq
  \begin{cases}
    \pi^- & \text{if $v_{\ell+1}^P \in N_{F_1}^-(v_\ell^P)$,}\\
    \pi^+ & \text{otherwise}.
  \end{cases}
\]
We apply Algorithm~\ref{a:alloc} to $F_1$ and $J$ to obtain a map $\psi: V(F_1) \to [k]$.
We do with~$\pi_e$ as defined above for each edge in $\cale$,
and with $Z=\bigcup_{\,T \in \calt_1\,}\{r^T,s^T\}$.
Our choice of allocation for vertices of~$Z$ is as follows: for each $T \in \calt_1$ we set $\psi(r^T) = \varphi_{\mathrm{root}}(r^T)$, whilst for each $T \in \calt_1$ with a secondary attachment $s^T$ we set $\psi(s^T)$ to be some $j\in[k]$ which lies in both $I(T)$ and the
appropriate neighbourhood of the image of $p^T$
(recall that $p^T$ is the parent of $s^T$, so $\psi(p^T)$ will already have been defined when the algorithm comes to choose~$\psi(s^T)$).
More precisely, fix $\bullet\in\{-,+\}$ so that $s^T\in N_{T}^\bullet(p^T)$,
and choose $\psi(s^T)\in N_{R^\star[[k]]}^\bullet\bigl(\psi(p^T)\bigr)\cap I(T)$
(this is possible since $|I(T)| > k/2$ by~\eqref{e:|I(T)|} and $\delta^0\bigl(\rstar[[k]]\bigr) > k/2$ by Lemma~\ref{l:reduced-graph}\ref{rg-rstar-deg-i}).
Observe that, with these definitions, Algorithm~\ref{a:alloc} ensures that for each edge $u \to v$ of $F_1$ we have that $\psi(u) \to \psi(v)$ is an edge of $J$, \emph{except} possibly when $\{u,v\} = \{p^T, s^T\}$, in which case the fact that $\psi(s^T)$ was chosen in~$N_{R^\star[[k]]}^\bullet\bigl(\psi(p^T)\bigr)$ ensures that $\psi(u) \to \psi(v)$ is an edge of~$R^\star[[k]]$. Since $J \subseteq R^\star[[k]]$, it follows that $\psi$ is an allocation of~$F_1$ to~$R^\star[[k]]$ (that is, a homomorphism from $F_1$ to $R^\star[[k]]$). Moreover, $\psi$ has the following crucial properties. First, our choices of $\cale$ and $\pi_e$ ensure that the allocation of each path in $\calp$ by $\psi$ is canonical. Second, for each $T \in \calt_1$ we have $\psi(r^T) = \varphi_{\mathrm{root}}(r^T)$ and, if $T$ has a secondary attachment, then our choice of $\psi(s^T)$ ensures that $\psi(s^T) \in I(T)$. Third, by Lemma~\ref{l:vtx-distribution} (with $S = V(F_1)$), with high probability we have for each $i \in [k]$ that
\begin{equation}\label{e:f1-b4-rerouting}
    |\psi^{-1}(i)| = \frac{v(F_1)}{k}  \pm \left(\frac{n}{\log n} + (|Z|+3n^{1/3})\Delta(F_1)^{56k^5\log \log n}\right) = \frac{v(F_1)}{k}  \pm \frac{2n}{\log n},
\end{equation}
using that $|Z| \le 2n^{0.999}$ by~\eqref{e:|calt|} and that $3n^{0.999}\Delta(F_1)^{56k^5\log \log n} \le n/\log n$.
Fourth, set $\Lambda \deq\{v_1^P:P\in\calp\}$, so $\Lambda$ is the set of roots of paths in $\calp$, and for each $\ink$ let $\calp_i \subseteq \calp$ consist of all paths $P \in \calp$ with $\psi(v_1^P) = i$, that is, whose root is allocated to $V_i$. Then $|\Lambda|= |\calp| = \lambda n/2^8$, and a similar application of Lemma~\ref{l:vtx-distribution} (with $S=\Lambda$) shows that with high probability we have for each $i \in [k]$ that
\begin{equation}\label{e:l1-b4-rerouting}
 |\calp_i| = |\psi^{-1}(i)\cap \Lambda|
  = \frac{\lambda n}{2^8k}  \pm  \frac{2n}{\log n} \geq \frac{\lambda m}{2^{9}}
\end{equation}
Fix an outcome of Algorithm~\ref{a:alloc} for which $\psi$ has each of these properties. We now modify $\psi$ to obtain the allocation $\varphi_1$ which is our goal in this phase; the procedure for this varies according to whether the paths in~$\calp$ have order~$2$ or~$7$.

Let us handle the former case first, that is, when each path in $\calp$ is an edge incident to a leaf.
Let $\bullet \in\{-,+\}$ be the sign such that $v_1^P\in N^\bullet(v_2^P)$ for all~$P\in\calp$.
By Lemma~\ref{l:reduced-graph}\,\ref{rg-rstar-deg-ii},
we have $\bigl|N_{R^\star}^\bullet(x)\cap[k]\bigr|\ge \alpha k/2$ for each $x\in V_0$,
so by Lemma~\ref{l:greedy-cover} there exists a function $\mathfrak{g}:V_0\to [k]$
such that $\mathfrak{g}(x) \in N_{R^\star}^\bullet(x)$ for each $x \in V_0$ and $\bigl|\mathfrak{g}^{-1}(i)\bigr|\le 1 + 2|V_0|/(\alpha k)\le 3\eps m/\alpha$
for each $\ink$.
Next, for each $\Diamond \in \cald$ let $\cald_i$ be the set of diamonds in $\Diamond$ with prefix $i$, so for each $i \in [k]$
we have $|\cald_i| \leq 4/\eta$ by our choice of $\cald$.
For each $i \in [k]$ choose pairwise disjoint subsets $\calp_i^0,\, \calp_i^\diamond,\, \calp_i^H \subseteq \calp_i$
with sizes $|\calp_i^0| = \bigl|\mathfrak{g}^{-1}(i)\bigr|$,\,
$|\calp_i^\diamond| = 2n|\cald_i|/k^2$ and $|\calp_i^H| = g$.
This is possible by~\eqref{e:l1-b4-rerouting} since
\[
  |\calp_i^0| + |\calp_i^\diamond| + |\calp_i^H|  = \bigl|\mathfrak{g}^{-1}(i)\bigr| + \frac{2n|\cald_i|}{k^2}+ g \leq \frac{3\eps m}{\alpha} + \frac{2n}{k^2} \cdot \frac{4}{\eta}+ \left\lceil \frac{\lambda m}{2^{10}} \right\rceil \leq \frac{\lambda m}{2^9} \leq |\calp_i|.
\]
Let $\calp^0 := \bigcup_\ink \calp^0_i$,\, $\calp^\diamond := \bigcup_\ink \calp^\diamond_i$ and
$\calp^H := \bigcup_\ink \calp^H_i$, so in particular
$|\calp^0| = \sum_{i \in [k]} \bigl|\mathfrak{g}^{-1}(i)\bigr| =  |V_0|$.

Our choice of $\calp^0$ allows us to choose, for each $P \in \calp^0$, an image $\varphi_1(v_2^P) \in \mathfrak{g}^{-1}\bigl(\psi(v_1^P)\bigr)$
so that the chosen images $\varphi_1(v_2^P)$ for each $P \in \calp^0$ are all distinct. We also set $\varphi_1(v_1^P) \deq \psi(v_1^P)$ for each $P \in \calp^0$. So $\varphi_1$ is a homomorphism from the paths in $\calp^0$ to $R^\star$ whose restriction to $\{v_2^P : P \in \calp^0\}$ is a bijection from that set to~$V_0$, which will ensure~\ref{l:tl/a-V0-paths}. Moreover, the neighbours in $Q$ of vertices mapped to $V_0$ are the vertices $v_1^P$ for $P \in \calp$, of which at most $\bigl|\mathfrak{g}^{-1}(i)\bigr| \le 3\eps m/\alpha$ are mapped to each $i \in [k]$; this will ensure~\ref{l:tl/a-V0-neigh}.

Similarly, our choice of $\calp^\Diamond$ allows us to choose a map $\mathfrak{h} : \calp^\diamond \to \cald$ so that for each $\ink$ each $P \in \calp^\diamond_i$ has $\mathfrak{h}(P) \in \cald_i$ and so that each $\Diamond \in \cald$ has $|\mathfrak{h}^{-1}(\Diamond)| = 2n/k^2$. This means that for each $\Diamond \in \cald$ we may do the following. Let $u$ be the prefix of $\Diamond$ (so $\Diamond \in \cald_u$), and let $\{v, v'\}$ be the middle of $\Diamond$. Choose $n/k^2$ paths $P \in \mathfrak{h}^{-1}(\Diamond)$ and for each set $\varphi_1(v_2^P) := v$; for each of the remaining $n/k^2$ paths $P \in \mathfrak{h}^{-1}(\Diamond)$ set $\varphi_1(v_2^P) := v'$. Also set $\varphi_1(v_1^P) := u = \psi(v_1^P)$ for every $P \in \mathfrak{h}^{-1}(\Diamond)$. So $\varphi_1$ gives a homomorphism from the paths in $\calp^\diamond$ to $R^\star[[k]]$ which maps $n/k^2$ paths to each branch of each diamond $\Diamond \in \cald$.

Finally, for each vertex $u \in V(F_1)$ for which $\varphi_1(u)$ has not yet been defined, set $\varphi_1(u) \deq \psi(u)$ (so $u$~remains allocated as in the outcome of Algorithm~\ref{a:alloc}). Since we previously set $\varphi_1(v_1^P) = \psi(v_1^P)$ for each path $P \in \calp^0 \cup \calp^\diamond$, we then have that $\varphi_1$ is a homomorphism from $F_1$ to $R^\star$. Moreover, the only vertices $u \in V(F_1)$ which may have $\psi(u) \neq \varphi_1(u)$ are vertices $v_2^P$ for paths $P \in \calp^0 \cup \calp^\diamond$.
Consequently, for each $\ink$ the number of vertices $u \in V(F_1)$ with $\psi(u) = i$ and $\varphi_1(u) \neq i$ is at most $|\calp^0_{i^*}| + |\calp^\diamond_{i^*}|$, where $i^* =i-1$ if $v_1^P \rarr v_2^P$ and $i^* = i+1$ if $v_1^P \larr v_2^P$.
Similarly, the number of vertices $v \in V(F_1)$ with $\varphi_1(v) = i$ and $\psi(v) \neq i$ is at most $(4/\eta)( n/k^2)$ since $v$ is in at most $4/\eta$ diamonds in $\cald$. We conclude that for each $\ink$ we have
\begin{equation}\label{e:2pathbound}
    \bigl|\,|\varphi_1^{-1}(i)| - |\psi^{-1}(i)|\,\bigr| \leq \frac{3 \eps m}{\alpha} + \frac{2n}{k^2}\cdot\frac{4}{\eta} + \frac{4}{\eta}\cdot \frac{n}{k^2} \leq \frac{4 \eps m}{\alpha}.
\end{equation}

Now consider instead the case where the paths in $\calp$ are bare paths of order~$7$. In this case, for each $i \in [k]$ we choose pairwise disjoint subsets $\calp_i^0,\, \calp_i^\diamond,\, \calp_i^H \subseteq \calp_i$ with $|\calp_i^0| = |V_0|/k \pm 1$, $|\calp_i^\diamond| = 2n|\cald|/k^3 \pm 1$ and $|\calp_i^H| = g$; the precise values of $|\calp_i^0|$ and $|\calp_i^\diamond|$ for $i \in [k]$ are chosen so that $\sum_{\ink}|\calp^0_i| = |V_0|$ and $\sum_{\ink}|\calp^\diamond_i| = 2n|\cald|/k^2$. By~\eqref{e:l1-b4-rerouting} it is possible to make these choices since
\[
  |\calp_i^0| + |\calp_i^\diamond| + |\calp_i^H|  \leq \frac{|V_0|}{k} + 1 + \frac{2n|\cald|}{k^3} + 1 + g \leq 2 + \frac{\eps n}{k} + \frac{2n}{k^2} + \left\lceil\frac{\lambda m}{2^{10}}\right\rceil \leq \frac{\lambda m}{2^9} \leq |\calp_i|.
\]
As before we let $\calp^0 := \bigcup_\ink \calp^0_i$, $\calp^\diamond := \bigcup_\ink \calp^\diamond_i$ and
$\calp^H := \bigcup_\ink \calp^H_i$, so in particular
$|\calp^0| = |V_0|$ and $|\calp^\diamond_i| = 2n|\cald|/k^2$.

Fix a bijection $p: \calp^0 \to V_0$. For each $P \in\calp^0$ set $\varphi_1(v_4^P) = p(P)$. Also, for each $P \in\calp^0$ set $\varphi_1(v_1^P) = \psi(v_1^P)$ and $\varphi_1(v_7^P) = \psi(v_7^P)$. Next, let $\bullet,\circ\in\{-,+\}$ be such that $v_3^P\in N_P^\bullet(v_4^P)$
and $v_5^P\in N_P^\circ(v_4^P)$, and choose $\varphi_1(v_3^P)\in N_\rstar^\bullet(\varphi_1(v_4^P))$
and $\varphi_1(v_5^P)\in N_\rstar^\circ(\varphi_1(v_4^P))$. Since $\rstar$ satisfies property \ref{rg-rstar-deg-ii}
of~Lemma~\ref{l:reduced-graph}, for each $P \in \calp^0$ there are at least $\alpha k/2$
options for the choices of $\varphi_1(v_3^P)$ and of $\varphi_1(v_5^P)$.
So by Lemma~\ref{l:greedy-cover} we may make these choices
so that for each $\ink$ at most $1 + |V_0|/(\alpha k/2) \le 3\eps m /\alpha $ paths $P \in \calp^0$ have $\varphi_1(v_3^P) = i$ and at most $3\eps m/ \alpha$ paths $P \in \calp^0$ have $\varphi_1(v_5^P) = i$.
Next, for each $P \in \calp^0$, choose $\varphi_1(v_2^P)$
to be an appropriate common neighbour of $\varphi_1(v_1^P)$ and $\varphi_1(v_3^P)$ in $R^\star[[k]]$ (here `appropriate' means respecting the direction of the edges between $v_1^P$ and $v_2^P$ and between $v_2^P$ and $v_3^P$),
and likewise choose $\varphi_1(v_6^P)$
to be an appropriate common neighbour of $\varphi_1(v_5^P)$ and $\varphi_1(v_7^P)$ in $R^\star[[k]]$. Since $\delta^0(R^\star[[k]]) \ge (1/2+\eta)k$, there are at least $2 \eta k$ options for each image, so by Lemma~\ref{l:greedy-cover} we may make these choices in such a way that for each $\ink$ at most $1+ \eps n/(2 \eta k) \leq \eps m/\eta$ paths $P \in \calp^0$ have $\varphi_1(v_2^P) = i$ and at most $\eps m/\eta$ paths $P \in \calp^0$ have $\varphi_1(v_6^P) = i$.
Then $\varphi_1$ is a homomorphism from the paths in $\calp^0$ to $R^\star$ whose restriction to $\{v_4^P : P \in \calp^0\}$ is a bijection from that set to $V_0$, which will ensure~\ref{l:tl/a-V0-paths}. Moreover, the neighbours in $Q$ of vertices mapped to $V_0$ are the vertices $v_3^P$ and $v_5^P$ for $P \in \calp$, of which at most $6\eps m/\alpha$ are mapped to each $i \in [k]$; this will ensure~\ref{l:tl/a-V0-neigh}.

Next
choose a map $f:\calp^\diamond\mapsto \cald$ such that for each $\Diamond\in\cald$
we have $|f^{-1}(\Diamond)| = 2 n/k^2$ (this is possible since $|\calp^\diamond| = 2n|\cald|/k^2$).
For each $\Diamond \in \cald$ we do the following. Let $u$, $\{v,v'\}$ and $w$ be the prefix, middle and suffix of $\Diamond$. Choose $n/k^2$ paths $P \in f^{-1}(\Diamond)$ and for each set $\varphi_1(v_3^P) = u, \varphi_1(v_4^P) = v,\varphi_1(v_5^P) = w$; for each of the remaining $n/k^2$ paths $P \in  f^{-1}(\Diamond)$ set $\varphi_1(v_3^P) = u$, $\varphi_1(v_4^P) = v'$, $\varphi_1(v_5^P) = w$.
In both cases also set $\varphi_1(v_1^P) = \psi(v_1^P)$ and $\varphi_1(v_7^P) = \psi(v_7^P)$.
Then, choose $\varphi_1(v_2^P)$ and $\varphi_1(v_6^P)$ exactly as we did for paths $P \in \calp^0$; as before there are at least $2 \eta k$ options for each image, so we may make these choices so that for each $\ink$ at most $(2 n|\cald|/k^2)/(2 \eta k) + 1\leq 2 n/(\eta k^2)$
paths $P \in \calp^\diamond$ have $\varphi_1(v_2^P) = i$ and at most $2 n/(\eta k^2)$ paths $P \in \calp^\diamond$ have $\varphi_1(v_6^P) = i$. Then $\varphi_1$ gives a homomorphism from the paths in $\calp^\diamond$ to $R^\star[[k]]$ which maps $n/k^2$ paths to each branch of each diamond $\Diamond \in \cald$.

As in the previous case our final step is to set $\varphi_1(u) \deq \psi(u)$ for each vertex $u \in V(F_1)$ for which $\varphi_1(u)$ has not yet been defined (so $u$ remains allocated as in the outcome of Algorithm~\ref{a:alloc}). Since we previously set $\varphi_1(v_1^P) = \psi(v_1^P)$ and $\varphi_1(v_7^P) = \psi(v_7^P)$ for each $P \in \calp^0 \cup \calp^\diamond$, we then have that $\varphi_1$ is a homomorphism from $F_1$ to $R^\star$. Moreover, the only vertices $u \in V(F_1)$ which may have $\psi(u) \neq \varphi_1(u)$ are the pendant vertices $v_2^P, \dots, v_6^P$ of paths $P \in \calp^0 \cup \calp^\diamond$.
Consequently, for each $\ink$ the number of vertices $u \in V(F_1)$ with $\psi(u) = i$ and $\varphi_1(u) \neq i$ is at most $5 \max_{i \in [k]} |\calp^0_i| + 5 \max_{i \in [k]} |\calp^\diamond_i| \leq 5(\eps n/k + 1) + 10n|\cald|/k^3 \leq 6 \eps m$.
Similarly, the number of vertices $v \in V(F_1)$ with $\varphi_1(v) = i$ and $\psi(v) \neq i$ is at most $6 \eps m/\alpha + 2 \eps m/\eta +  (2n/k^2)\cdot (4/\eta) + 4n/(\eta k^2) \leq 3 \eps m/\eta$ since $v$ is in at most $4/\eta$ diamonds in $\cald$. We conclude that for each $\ink$ we have
\begin{equation}\label{e:7pathbound}
    \bigl|\,|\varphi_1^{-1}(i)| - |\psi^{-1}(i)|\,\bigr| \leq 6 \eps m + \frac{3\eps m}{\eta} \leq \frac{4 \eps m}{\eta}.
\end{equation}

In both cases we have obtained a homomorphism $\varphi_1$ from $F_1$ to $R^\star$ with desired properties for the images of vertices of paths in $\calp^0$ and $\calp^\diamond$. Moreover, combining~\eqref{e:f1-b4-rerouting} with~\eqref{e:2pathbound} or~\eqref{e:7pathbound} (according to the case), in both cases for each $\ink$ we have
\begin{align}
  \bigl|\varphi_1^{-1}(i)\bigr|
  & =
    \frac{v(F_1)}{k} \pm   \left(\frac{2n}{\log n} + \frac{4\eps m}{\eta}
    \right)
  =
    \frac{v(F_1)}{k}\pm \frac{5\eps m}{\eta}.
  \label{e:varphi-phase-1}
\end{align}

 \smallskip
\noindent\textbf{The second phase.}
In our second allocation phase,
 we use an auxiliary graph which is a weighted blow-up of~$J$
 to build an allocation~$\varphi_2$ of~$F_2$ in $J$.
 This will compensate for the non-uniform usage of clusters,
 drastically reducing the differences between the numbers
 of vertices mapped to each cluster.
Let $n_2 \deq v(F_2)$, and for each $\ink$ set
  \begin{align}
    \alpha_i
    &\deq
      \frac{1}{n_2}\left( \frac{n - |V_0|}{k} - |\varphi_0^{-1}(i)| - |\varphi_1^{-1}(i)|\right)
      \qquad\text{and}\qquad
      b_i\deq \alpha_i \log\log\log n_2.
      \label{e:treelike/alpha-i-def}
  \end{align}
(We remark that the $\log \log \log n_2$ term in the definition of $b_i$ is chosen simply to grow very slowly as a function of $n_2$; any other sufficiently slowly growing function of~$n_2$ would work equally well.)
Since $\sum_\ink |\varphi_0^{-1}(i)| = |\Vground|$, $\sum_\ink |\varphi_1^{-1}(i)| = v(F_1) - |V_0|$ and $n = v(F_1)+v(F_2) + |\Vground|$, we have
  \begin{equation}\label{e:sum-alpha_i}
    \sum_{\ink}\alpha_i=1.
  \end{equation}
  By~\eqref{e:|A_0|},~\eqref{e:varphi-phase-1},~\eqref{e:treelike/alpha-i-def} and the facts that $n_2 \geq n/3$ and $|\varphi_0^{-1}(i)| \leq |\Vground|$, for each $\ink$ we have
  \begin{align} \label{e:alpha_i-lbound}
    \alpha_i
   &\geq
    \frac{1}{n_2} \left(\frac{n}{k} - \frac{|V_0|}{k} - |\Vground| - \frac{v(F_1)}{k} - \frac{5 \eps m}{\eta}\right)  \\&=
    \frac{1}{n_2} \left(\frac{n_2}{k} - \frac{|V_0|}{k} - \frac{k-1}{k} |\Vground| - \frac{5 \eps m}{\eta}\right)
    \geq \frac{1}{k} - \frac{6 \eps n}{\eta  n_2 k} \geq  \frac{1}{k} \left( 1 - \frac{20\eps}{\eta}\right).\nonumber
  \end{align}
  Let $B$ be a (blow-up) graph of
  $\rstar[[k]]$, obtained by replacing each $i\in[k]$  by a
  set $B_i$ with precisely $b_i$~vertices,
  with~$x\farc y\in E(B)$ if and only if $x\in B_i$, $y\in B_j$
  and $i\farc j\in E(\rstar)$.
  Note that $v(B)=\log\log\log n_2$ by~\eqref{e:treelike/alpha-i-def} and~\eqref{e:sum-alpha_i}.
  Also $B$~contains a spanning $d'_J$-regular expander subdigraph~$\Jblow$
  by Lemma~\ref{l:reg-expander}, since
  \begin{align*}
    \delta^0(B)
    \geq \delta_0\bigl(\rstar[[k]]\bigr)\cdot \min_{\ink} b_i
    & \overset{
      \phantom{\eqref{e:alpha_i-lbound}}}{=}
      \left(\frac{1}{2} + \eta\right) k \min_{\ink}\,\alpha_iv(B)\\
    &   \overset{
      \eqref{e:alpha_i-lbound}}{\geq} \left(\frac{1}{2} + \eta\right) k
      \cdot v(B)\cdot
      \frac{1}{k}\left(1-\frac{20 \eps}{\eta}\right)\nonumber\\
       &
 \overset{
      \phantom{\eqref{e:alpha_i-lbound}}}{\geq}
      \left(\frac{1}{2}+\frac{\eta}{2}\right)v(B).
  \end{align*}

We apply Algorithm~\ref{a:alloc}
to~$F_2$ and $B$ to obtain a map $\psi_B : V(F_2) \to V(B)$. This application is simpler
than the application of  Algorithm~\ref{a:alloc} in Phase 1. Specifically,
we apply Algorithm~\ref{a:alloc}
with $\cale=\emptyset$ and with $Z=\bigcup_{\,T \in \calt_2\,}\{r^T,s^T\}$.
Our choice of allocation for vertices of~$Z$ is essentially the same as in Phase 1: for each $T \in \calt_2$ we
set $\psi_B(r^T)$ to be an arbitrary vertex in $B_{\varphi_{\mathrm{root}}(r^T)}$,
and for each $T \in \calt_2$ with a secondary attachment we choose $\psi_B(s^T)$ in both $\bigcup_{i \in I(T)} B_i$ and the appropriate neighbourhood
of the image of $p^T$ (recall that~$p^T$ is the parent of $s^T$ in $T$,
so $\psi_B(p^T)$ has already been defined when Algorithm~\ref{a:alloc} considers $s^T$).
In particular, the latter condition ensures that the
map $\psi_B$ is a homomorphism from $F_2$ to $B$.
Moreover, applying Lemma~\ref{l:vtx-distribution} with $S = V(F_2)$ we find that with high probability we have for each $x \in V(B)$ that
  \begin{align}\label{e:2nd-alloc-distrib}
    |\psi_B^{-1}(x)| = \frac{n_2}{v(B)}   \pm \left(\frac{n_2}{\log n_2}
    + \bigl(|Z|+3n_2^{1/3}\bigr)\Delta(F_2)^{56v(B)^5\log \log n_2}\right)
    =   \frac{n_2}{v(B)}\pm \frac{2n}{\log n},
  \end{align}
  where we use the bounds
  $|Z|\le 2n^{0.999}$ and~$3n^{0.999}\Delta(F_2)^{56v(B)^5\log \log n_2} \le n/\log n$.

  We recover the desired allocation~$\varphi_2$ of $F_2$
  to~$\rstar[[k]]$ by ``collapsing'' the allocation of each blown-up vertex.
  More precisely, we define~$\varphi_2: V(F_2) \to [k]$ by putting~$\varphi_2(v)=i$
  for each $v\in V(F_2)$ whenever $\psi_B(v)\in B_i$. So~$\varphi_2$ is a homomorphism from $F_2$ to $\rstar[[k]]$ with $\varphi_2(r^T)=\varphi_{\mathrm{root}}(r^T)$ for each $T \in \calt_2$
  and $\varphi_2(s^T)\in I(T)$ for each $T \in \calt_2$ with a secondary attachment. From~\eqref{e:2nd-alloc-distrib} it follows that for each $i \in [k]$ we have
  \begin{align}\label{e:2nd-alloc-distrib_2}
    |\varphi_2^{-1}(i)| =  \sum_{x \in B_i} |\psi_B^{-1}(x)| = \frac{b_i n_2}{v(B)}   \pm \frac{2nb_i}{\log n} = \alpha_i n_2 \pm \frac{2n \log \log n}{\log n}.
  \end{align}

 \smallskip
\noindent\textbf{The conclusion.}
Let $\varphi_{\mathrm{join}}: V(Q) \to R^\star$ be the map formed by combining $\varphi_0$, $\varphi_1$ and $\varphi_2$. More precisely, for each $x \in V(Q)$ we set $\varphi_{\mathrm{join}}(x) = \varphi_0(x)$ if $x \in \Vground$, $\varphi_{\mathrm{join}}(x) = \varphi_1(x)$ if $x \in V(F_1)$ and $\varphi_{\mathrm{join}}(x) = \varphi_2(x)$ if $x \in V(F_2)$.
For each~\ink\ we then have
  \newcommand{\oversetphantom}{\overset{\phantom{\eqref{e:2nd-alloc-distrib}}}{=}}
  \begin{align}
    |\varphi_{\mathrm{join}}^{-1}(i)|
        & \oversetphantom
      |\varphi_0^{-1}(i)| + |\varphi_1^{-1}(i)| + |\varphi_2^{-1}(i)|
      \nonumber\\
    & \overset{\eqref{e:2nd-alloc-distrib_2}}{=}
      |\varphi_0^{-1}(i)| + |\varphi_1^{-1}(i)|
            + \alpha_in_2
      \pm \frac{2n\log\log n}{\log n}
      \nonumber\\
    & \overset{\eqref{e:treelike/alpha-i-def}}{=}
      |\varphi_0^{-1}(i)| + |\varphi_1^{-1}(i)|
            + \frac{n - |V_0|}{k} - |\varphi_0^{-1}(i)| - |\varphi_1^{-1}(i)|
      \pm \frac{2n\log\log n}{\log n}
      \nonumber\\
    & \oversetphantom
      m \pm \frac{2n\log\log n}{\log n}.
      \nonumber
  \end{align}
  Finally, for each $\ink$ set $\delta_i \deq m - |\varphi_{\mathrm{join}}^{-1}(i)|$, so $\delta_i < n/k^3$ and $\sum_{\ink} \delta_i = 0$. Recall that our choice of~$\varphi_1$ ensured that for each $\Diamond \in \cald$ at least $n/k^2$ paths $P \in \calp^\diamond$ were mapped to each branch of $\cald$. So we may apply Lemma~\ref{l:shifting}. Writing $\calp^{\diamond}$ also for the oriented graph which is the disjoint union of the paths in $\calp^{\diamond}$\!, this yields a homomorphism $\varphi$ from $\calp^{\diamond}$ to $R^\star[[k]]$ with $|\varphi^{-1}(i)| = |\varphi_{\mathrm{join}}^{-1}(i)| + \delta_i$ for each $\ink$ and such that $\varphi(x) = \varphi_{\mathrm{join}}(x)$ for all vertices of each $P \in \calp^\diamond$ except for the central vertex $v_4^P$ if $P$ has order~7 or the leaf vertex $v_2^P$ if $P$ has order 2. Setting also $\varphi(x) = \varphi_{\mathrm{join}}(x)$ for every other vertex $x$ of $Q$ we obtain a map $\varphi(x) : V(Q) \to V(R^\star)$ extending $\varphi_0$ such that, for every $\ink$, we have
  \[
    |\varphi^{-1}(i)| = |\varphi_{\mathrm{join}}^{-1}(i)| + \delta_i = m,
  \]
so $\varphi$ satisfies~\ref{l:tl/a-i}.

To conclude the proof of the claim, let us argue that~$\varphi$ satisfies the remaining stated properties.
Recall that $\varphi_1$ is a homomorphism from $F_1$ to $\rstar$ and $\varphi_2$ is a homomorphism from $F_2$ to $\rstar$; it follows that for each $T \in \calt$ the restriction of $\varphi_{\mathrm{join}}$ to $T$ is a homomorphism from $T$ to $R^\star$, and so the same is true of $\varphi$.
Next observe that for each vertex $u \in V(F)$, our applications of Algorithm~\ref{a:alloc} allocated all in-children of $u$ to the same vertex of $R^\star$ and all out-children of $u$ to the same vertex of $R^\star$, except in the cases where the edge between $u$ and the child was an edge of $\cale$ or if the child was a secondary attachment; each of these exceptions can occur for at most one child of $u$. Counting also the parent of $u$, we conclude that $\Delta(\psi), \Delta(\psi_B) \leq 5$; it follows that $\Delta(\varphi) \leq 5$, so we have~\ref{l:tl/phi-deg}.

Our choice of $\calp^H$ immediately gives~\ref{l:tl/a-iii}. Moreover, each path $P \in \calp^H$ was allocated canonically by $\psi$; since for each $P \in \calp^H$ and $v \in V(P)$ we have $\varphi(v) = \varphi_{\mathrm{join}}(v) = \varphi_1(v) = \psi(v)$, it follows that we have~\ref{l:tl/a-ii}. Now recall that our choice of the sets $\calp^0_i$ and the map $\varphi_1$ ensured that~\ref{l:tl/a-V0-paths}, and~\ref{l:tl/a-V0-neigh} held with $\varphi_1$ in place of~$\varphi$. Since we have $\varphi(v) = \varphi_{\mathrm{join}}(v) = \varphi_1(v)$ for all vertices $v$ in paths in $\calp^0$, we have~\ref{l:tl/a-V0-paths} and~\ref{l:tl/a-V0-neigh}.  Finally, for~\ref{l:tl/a-st} recall for each $i \in \{1, 2\}$ that the choice of $\varphi_i$ ensured that for every $T \in \calt_i$ we had $\varphi_i(r^T) = \varphi_{\mathrm{root}}(r^T)$, so $\varphi(r^T) = \varphi_{\mathrm{root}}(r^T)$ also. Moreover, if $T$ had a secondary attachment $s^T$, then we had $\varphi_i(s^T) \in I(T)$, and so $\varphi(s^T) \in I(T)$ also.

\subsection{Proof of Claim~\ref{cl:embed}}
\label{s:main-proof/proof-cl:embedding}
In this section we
describe and analyse a greedy algorithm
that extends $\varrho$ to an embedding of $Q-M$ into~$G$.
This algorithm embeds each vertex of $F$ in turn, in the order of $\prec$.
Call a vertex $v \in V(F)$ \defi{special} if
$v$ is an attachment of some $T \in \calt$, if $v$ is
a neighbour of a distinguished vertex,
or if $v \in \{v_1^P, v_7^P\}$ for some~$P\in\calp^H$.
The algorithm treats these vertices of $G$ specially, embedding them into
fixed sets which are chosen beforehand for that purpose; another pre-chosen set will
never be embedded to by the algorithm, and this will provide the necessary superregularity properties.
Specifically, we reserve pairwise disjoint subsets
$\treeAttach,B,\pathRoot,\pathLast$ and~$\pathOther$
of~$V(G)\setminus V_0$ obtained from the following claim.

\begin{claim}\label{cl:embed/reserve}
  There exist pairwise disjoint subsets $\treeAttach,\, B,\, \pathRoot,\,\pathLast, \pathOther \subseteq V(G)\setminus \bigl(V_0\cup \varrho(\Vground)\bigr)$
  such that for all $\ink$ and all $T\in\calt$
  the following statements hold.
  \begin{enumerate}
  \item\label{i:reserved-set-size}
    $ |\treeAttach\cap      V_i|
    = |B\cap             V_i|
    = |\pathOther\cap   V_i| = g/10$ and
    $ |\pathRoot\cap     V_i|
    = |\pathLast\cap  V_i| = g+2\beta m$.
  \item\label{i:large-degree-reserved}
    For all $x\in V_i$ and all $Y\in \{\pathRoot,\pathLast,\pathOther\}$
    we have $\deg^-(x,Y\cap V_\imm),\, \deg^+(x,Y\cap V_\ipp) \ge |Y|d/2k$.
  \item\label{i:degree-from-V0-in-B}
    For all $x\in V_0$, all $\bullet\in\{-,+\}$ and all~$j\in N_\rstar^\bullet(x)$, we have
    $\deg^\bullet_G(x,B\cap V_j)\ge |B|/2k$.
  \item\label{i:large-degree-reserved-for-attachments}
    For each attachment~$a$ of $T$, and $\bullet\in\{-,+\}$ if~$u\in N_Q^\bullet(a)$ is the (unique) neighbour of~$a$ outside of~$T$ and $\circ$ is the opposite sign to $\bullet$,
    then
    \[
      \deg^\circ(\varrho(u),\treeAttach\cap V_{\varphi(a)}) \ge |\treeAttach|\eta/2k.
    \]
  \end{enumerate}
\end{claim}

\begin{claimproof}
 For each \ink~let $V_i'=V_i\setminus \varrho(\Vground)$, and choose a collection of pairwise disjoint subsets $\treeAttach\cap      V'_i$, $B\cap   V'_i$, $\pathOther\cap   V'_i$, $\pathRoot\cap  V'_i$, $\pathLast\cap  V'_i$ of~$V'_i$, with sizes dictated by~\ref{i:reserved-set-size}, uniformly at random among all such collections and independently of the choices for each $i' \neq i$. In particular this means that each set is chosen uniformly at random among all subsets of $V'_i$ of the specified size. Define each of $\treeAttach,B,\pathRoot,\pathLast$ and~$\pathOther$ to be the union over all $i \in [k]$ of the corresponding intersections. So~\ref{i:reserved-set-size} holds by our choice of the sets; we now use concentration inequalities to show that with high
  probability we also have the properties~\ref{i:large-degree-reserved}--\ref{i:large-degree-reserved-for-attachments}.
Let~$\cala=\{\treeAttach,B,\pathRoot,\pathLast,\pathOther\}$.
  Fix \ink, and recall that $G[V_\imm\farc V_i]$ and~$G[V_i\farc V_\ipp]$ are both
  $(d,\eps)$-superregular. Hence, for each~$x\in V_\imm$, each
  $y\in V_\ipp$ and all $X\in\cala$,
  we have that $\deg^+(x,X\cap V_i')$ and $\deg^-(y,X\cap V_i')$ are
  random variables with hypergeometric distribution and with
  expectation at least $|X\cap V_i'|(d-\eps)-|\Vground|\geq 2|X\cap V_i|d/3$.
  So, by Theorem~\ref{t:exp}, the probability that any one of these random
  variables has value
  strictly less than~$|X\cap V_i|d/2$
  decreases
  exponentially with~$n$. By taking a union bound over all $i \in [k]$, all $x \in V_i$
  and each $X \in \{\pathRoot,\pathLast,\pathOther\}$, it
  follows that with high probability all these random
  variables have value at least $|X\cap V_i|d/2 = |X|d/2k$
  (for each choice of~$X$), which implies that
  \ref{i:large-degree-reserved}
  holds with high probability.
  Similar union bound arguments complete the proof of the claim, using
  Lemma~\ref{l:reduced-graph}\,\ref{rg-rstar-V0-edge}
  for~\ref{i:degree-from-V0-in-B},
  and using both~\eqref{e:allocation-r^T} and~\eqref{e:allocation-s^T}
  for~\ref{i:large-degree-reserved-for-attachments}.
\end{claimproof}

In the case where the paths in $\calp$ are bare paths of order~$7$, fix sets $\treeAttach,\, B,\, \pathRoot,\,\pathLast$ and~$\pathOther$ as in Claim~\ref{cl:embed/reserve}. We will embed attachments
of trees in~$\calt$ into~\treeAttach,
neighbours of distinguished vertices into~$B$, vertices $v_1^P$ of paths $P \in \calp^H$ into~\pathRoot\ and vertices $v_7^P$ of paths $P \in \calp^H$ into~\pathLast,
while \pathOther\ will be used
to ensure the superregularity properties of~\ref{cl:embed}\,\ref{i:sreg}.
In the other case, where the paths in $\calp$ have order~$2$, we instead fix sets $\treeAttach,\, B,\, \pathRoot,$ and~$\pathOther$ with the properties in Claim~\ref{cl:embed/reserve} but take $\pathLast$ to be empty.
We embed to these sets as in the previous case, except now there are no vertices $v_7^P$ for $P \in \calp^H$ to be embedded to $\pathLast$.
In both cases all other vertices of $Q - M$ will be embedded outside of these sets (recall that distinguished vertices have already been embedded to~$V_0$).
We also insist that each attachment of each tree in $\calt$ is embedded in the appropriate neighbourhood of the image of its neighbour in $\Vground$, and each neighbour of a distinguished vertex is embedded within the appropriate neighbourhood of the image of the distinguished vertex.
For an easy description of these rules for embedding,
for each $v\in V(F)$ which is not a distinguished vertex we set
  \[
    Z_v=\begin{cases}
      V_{\varphi(v)} \cap A\cap N_G^\bullet\bigl(\varrho(\hat r^T)\bigr)
      & \text{if $v=r^T$ for some $T \in \calt$ and $r^T \in N_Q^\bullet(\hat r^T)$,}\\
      V_{\varphi(v)} \cap A\cap N_G^\bullet\bigl(\varrho(\hat s^T)\bigr)
      & \text{if $v=s^T$ for some $T \in \calt$ and $s^T \in N_Q^\bullet(\hat r^T)$,}\\
      V_{\varphi(v)} \cap B\cap N_G^\bullet\bigl(\varrho(z)\bigr)
      & \text{if $v$ is a neighbour of a distinguished vertex $z$ and $v \in N_Q^\bullet(z)$,}\\
      V_{\varphi(v)} \cap \Pi_1
      & \text{if $v=v_1^P$ for some $P\in\calp^H$,}\\
      V_{\varphi(v)} \cap \Pi_7
      & \text{if $v=v_7^P$ for some $P\in\calp^H$, and}\\
      V_{\varphi(v)} \setminus (\treeAttach\cup B\cup \pathRoot \cup \pathLast \cup \pathOther)
      & \text{if $v$ is not special,}
    \end{cases}
  \]
and we will embed $v$ within $Z_v$ unless $v \in M$ (in which case we do not embed $v$ at all). We do this by the following greedy algorithm.

\medskip

\phase{Setup.}
Let $\verticesUsedAtStart$ be the initial image of~$\varrho$
(that is, $\verticesUsedAtStart$ contains the images of vertices in $\Vground$
and of distinguished vertices). Let $t_1 \prec t_2 \prec \dots \prec t_{v(F)}$ be the vertices of $F$\!, ordered as in $\prec$.
Beginning at time $\tau = 1$, we take the following steps.

\phase{1) Update available vertices.}
Say that a vertex $u \in V(F)$ is \defi{open} if $u \prec \ttau$ but $u$ has a child $v$ with $\ttau \prec v$.
Let
\[
  \open(\tau) := \{u \in V(F) : u \mbox { is open}\},
\]
and
\[
  \used(\tau)\deq
  \bigl(\,\verticesUsedAtStart\cup
 \{\varrho(t_\sigma): \sigma < \tau, t_\sigma \notin M\}\,\bigr)
\]
so $\used(\tau)$ is the set of vertices of~$G$ which have already been occupied by the
image of a vertex of $Q$.
Let
\[
  \reserved(\tau)= \bigcup_{u \in \open(\tau)} \,\,\bigcup_{v\in C(u)} R(v)
\]
be the set of vertices which are currently reserved for children of open vertices, and for each $w \in V(F)$ with $\ttau \prec w$ let
\begin{align*}
  Z_{w,\tau}
  &   =  Z_w \setminus \bigl( \used(\tau)\cup\reserved(\tau)\bigr).
  \end{align*}
For vertices $w \in V(F)$ for which a reserved set $R(w)$ has not yet been selected, $Z_{w, \tau}$ is the set which $w$ is presently permitted to reserve vertices from (we make no use of the sets $Z_{w, \tau}$ for those vertices $w$ for which~$R(w)$ has already been selected). Finally, for each $w \in \{\ttau\} \cup C(\ttau)$ set
\begin{align*}
  \calu^+(w) =  \bigl\{ Z_{x,\tau}: x \in C^+(w)\bigr\} \mbox{ and } \calu^-(w) =  \bigl\{ Z_{x,\tau}: x \in C^-(w)\bigr\}
\end{align*}
and let $S_w \subseteq F$ be the oriented star with centre~$w$ whose leaves are the children of~$w$.
Say that a vertex $\vtau \in V_{\varphi(\ttau)}$ is \defi{$\ttau$-good} if for each $\circ\in\{-,+\}$ and each $w\in C^\circ(\ttau)$ we have
    $\deg^\circ(\vtau, Z_{w,\tau})\ge \gamma m$.

\phase{2) Embed $\vtau$.} If $\ttau$ is a distinguished vertex or a vertex of $M$, then do nothing at this step. Otherwise, embed $\ttau$ as follows.
\begin{enumerate}[leftmargin=3em,label={\hfill(2.\arabic*)} ,ref={(2.\arabic*)}]
\item\label{i:step3a} If $\ttau = r^T$ for some $T \in \calt$, then choose a $\ttau$-good vertex $\vtau \in Z_{\ttau, w}$ and set $\varrho(\ttau) = \vtau$.
\item\label{i:step3b} Otherwise, choose a $\ttau$-good vertex $\vtau \in R(\vtau)$ and set $\varrho(\ttau) = \vtau$.
\end{enumerate}

\phase{3) Declare reserved sets for children of $\ttau$.}
For each $\bullet\in\{-,+\}$ and each child $w \in C^\bullet(\ttau)$ which is not a distinguished vertex,
choose a set $R(w) \subseteq Z_{w,\tau}$ with $|R(w)| = \sqrt{m}$ which is
$\bigl(\calu^+(w),\calu^-(w),\beta,\gamma,\varphi,m\bigr)$-good for $S_{w}$, and make these choices so that the sets $R(w)$ for $w \in C(\ttau)$ are pairwise disjoint. Moreover, if $\ttau$ is neither a distinguished vertex nor an element of $M$ then we insist additionally that $R(w) \subseteq N_G^\bullet(\vtau)$.

\phase{4) Loop.} If $\tau = v(F)$ then terminate; otherwise increment $\tau$ and return to Step 1.

\bigskip

\medskip\noindent\textbf{Proof that the algorithm runs successfully.} Observe that the Setup and Steps 1 and 4 consist solely of definitions. We shall prove that (I) it is possible to make the choices required by the embedding
algorithm at Steps 2 and 3, and (II) that if the choices of the algorithm can be made, then it produces an embedding~$\varrho$ of $Q - M$ in $G$ with the properties stated in the claim.

We first prove~(II). At the start of the algorithm,
the images of $\Vground$ and of the distinguished vertices are already fixed under~$\varrho$, which is an embedding of the subgraph of $Q$ induced by these vertices into $G$ (in~particular, $\varrho$ bijectively maps the distinguished vertices to~$V_0$). So to show that the outcome of the algorithm is a embedding~$\varrho$ from $Q - M$ to $G$, it suffices to show that $\varrho$ is injective and also that for each vertex $\ttau \in F$ we have
\begin{enumerate}[label=(\alph*)]
    \item $\varrho(\ttau) \in N^+_G(\varrho(u))$ for each $u \in N^-_Q(\ttau)$ which was already embedded at time $\tau$, and
    \item $\varrho(\ttau) \in N^-_G(\varrho(u))$ for each $u \in N^+_Q(\ttau)$ which was already embedded at time $\tau$.
\end{enumerate}
To do this, observe first that since the algorithm considered the vertices of $Q$ in an ancestral order, for each $\ttau \in V(F)$ with a parent $t_\sigma$ in $F$ the parent $t_\sigma$ had already been embedded at time $\tau$. Furthermore, the image $\varrho(\ttau)$ of $\ttau$ was chosen within the set $R(\ttau)$ of vertices reserved for the embedding of $\ttau$, which in turn was previously chosen to be a subset of the appropriate neighbourhood of $t_\sigma$ at time $\sigma$. Observe also that the set $R(\ttau)$ was chosen to avoid $\used(\sigma)$, and no vertices we embedded into $R(\ttau)$ between times $\sigma$ and $\rho$, so no vertex was embedded to $\varrho(\ttau)$ before time $\tau$. Moreover, for the $\bullet \in \{-,+\}$ with $\ttau \in N_Q^\bullet(t_\sigma)$ we have $\varrho(\ttau) \in N_G^\bullet(\varrho(t_\sigma))$, as desired. It remains to demonstrate the desired property in the case where a neighbour of $\ttau$ in $Q$ other than a parent of $\ttau$ was already embedded at time $\tau$. This can occur in the following three ways.
\begin{enumerate}[label=(\arabic*)]
\item $\ttau = r^T$ for some $T \in \calt$. In this case $\hat r^T$ is the only neighbour of $\ttau$ which has previously been embedded, and the definition of $Z_\ttau$ ensures that $\ttau$ is embedded in the appropriate neighbourhood of $\varrho(\hat r^T)$. Moreover, the embedding of $\ttau$ was chosen to avoid $\used(\tau)$, so no vertex was embedded to $\varrho(\ttau)$ before time $\tau$. Together with the previous observation that this holds for vertices $\ttau$ with a parent in $F$, this shows that $\varrho$ is injective.
\item $\ttau = s^T$ for some $T \in \calt\!$. In this case $\hat s^T$ and the parent~$p^T$ of $\ttau$ in $T$ are the only neighbours of~$\ttau$ which have previously been embedded. The definition of $Z_\ttau$ ensures that $\ttau$ is embedded in the appropriate neighbourhood of $\hat \varrho(s^T)$.
\item $\ttau$ is the parent of a distinguished vertex~$v$. In this case the parent of~$\vtau$ and $v$ are the only neighbours of~$\vtau$ which have previously been embedded. Again the definition of $Z_\ttau$ ensures that $\ttau$ is embedded in the appropriate neighbourhood of $\varrho(v)$.
\end{enumerate}
So the output of the algorithm is indeed an embedding $\varrho$ of $Q - M$ in $G$. Also our choice of the sets $Z_v$ ensures that we have~\ref{i:respects-alloc}. Now suppose that the paths in $\calp$ have order two. In this case Claim~\ref{cl:allocate}~\ref{i:Fi-i},~\ref{i:Fi-ii} and~\ref{i:Fi-iii} together imply that for each $\ink$ precisely $m$ vertices of $Q$, precisely $g$ vertices of $M$, and precisely~$g$ vertices~$v_1^P$ of paths $P \in \calp^H$ are allocated to $V_i$, and then~\ref{cl:embed_2path} follows from~\ref{i:respects-alloc}. Similarly, if the paths in $\calp$ have order seven then Claim~\ref{cl:allocate}~\ref{i:Fi-i},~\ref{i:Fi-ii} and~\ref{i:Fi-iii} together imply that for each $\ink$ precisely $m$ vertices of~$Q$, precisely~$5g$ vertices of $M$, precisely $g$ vertices $v_1^P$ of paths $P \in \calp^H$ and precisely $g$ vertices $v_7^P$ of paths $P \in \calp^H$ are allocated to $V_i$, so we have~\ref{cl:embed_7path}.

For~\ref{i:sreg}, suppose first that the paths in~$\calp^H$ have order 2.
This means that $\pathLast,W_1,\dots,W_k$ are all empty.
Since the embedding respects the allocation, for each~\ink\ precisely $g$ vertices are embedded to $\pathRoot\cap V_i$, so $|U_i|=g$. Moreover, among the $m$ vertices $u \in V(Q)$ with $\varphi(u) = i$, the vertices which have not been embedded in~$V_i$ are precisely the $g$ vertices $v_2^P$ for $P \in \calp^H$ with $\varphi(v) = i$, so $|V_i^\star|=g$ as well. Since $g=\bigl\lceil\frac{\lambda m}{2^{10}}\bigr\rceil$, it follows that $G[V_{\imm}^\star\rarr U_i]$,
$G[U_i\rarr V_{\ipp}^\star]$ and $G[V_{\imm}^\star\rarr V_i^\star]$ are each $(d,\eps')$-regular by Lemma~\ref{l:slice-pair}.
To check the degree condition required for superregularity in~\ref{i:sreg}, note that
by Claim~\ref{cl:embed/reserve}\,\ref{i:large-degree-reserved} and the fact that precisely~$g$ vertices $v_1^P$ of paths $P \in \calp^H$ are embedded to $\Pi_1 \cap V_i$, for each~\ink\ and each~$x\in V_{i+1}$ we have
\begin{align*}
  \deg^-(x, U_i)
  & \ge \frac{|\pathRoot|d}{2k}-\bigl(|\pathRoot \cap V_i|-g\bigr) \ge \frac{kgd}{2k}-2\beta m \ge \beta g = \beta|U_i|,
\end{align*}
and similarly for each~\ink\ and each~$x\in V_{i-1}$ we have
$\deg^+(x, U_i) \geq \beta |U_i|$. In the case where the paths in~$\calp^H$ have order~$7$, we obtain identical bounds with $W_i$ and $\Pi^7$ in place of $U_i$ and $\Pi^1$ respectively, as in this case for each $\ink$ precisely~$g$ vertices $v_7^P$ of paths $P \in \calp^H$ are embedded to $\Pi_1 \cap V_i$. Moreover, since no vertex is ever embedded to $\pathOther$, we have $\pathOther \cap V_i\subseteq V_i^\star$ for each~\ink. This means that by Claim~\ref{cl:embed/reserve}\,\ref{i:large-degree-reserved} for each~\ink\ and each~$x\in V_i$ we have
\begin{align*}
  \deg^-(x,V_\imm^\star)
 \ge \deg^-(x,\pathOther \cap V_{i-1}) \ge \frac{kg}{10}\cdot \frac{d}{2k} \ge \beta g =\beta|V_\imm^\star|,
\end{align*}
and similarly $\deg^+(x,V_{i+1}^\star)
\ge \beta|V_{i+1}^\star|$.
It follows that each of the graphs in~\ref{i:sreg} is $(\beta, \eps')$-superregular, concluding the proof of (II).

\bigskip\noindent
We now turn to proving~(I).
The following invariant plays a crucial role in our analysis:
for each $w\in V(F)$ which is not distinguished and each time~$\tau$ in the execution of the algorithm we have
\begin{equation}\label{e:Zw}
|Z_{w,\tau}|=\bigl|\,Z_w\setminus\bigl(\used(\tau)\cup\reserved(\tau)\bigr)\,\bigr|\ge \beta m.
\end{equation}
To verify~\eqref{e:Zw}, note that since $\prec$ is a tidy ancestral order of $F$, for each $\tau$ we have $|\open(\tau)| < \log_2 n$ and so $|\reserved(\tau)| \le (\log_2 n) \Delta(Q) \sqrt{m} \le n^{2/3}$. Moreover,
Claim~\ref{cl:embed/reserve} and the properties of~$\varphi$ guaranteed by Claim~\ref{cl:allocate} imply that
for each~\ink\ we have
\[|\pathRoot \cap \varphi(V(Q) \setminus M) \cap V_i| = g = |\pathLast \cap \varphi(V(Q) \setminus M) \cap V_i| = g,
\]
except in the case where paths in $\calp$ have order two, in which case we do not have the latter equality because~$\Pi_7$ is empty.
For each $\ink$ we also have
\[
  \bigl| A   \cap \varphi\bigl(V(Q)\setminus M\bigr) \cap V_i\bigr|          \stackrel{\phantom{\ref{l:tl/a-V0-neigh}}}{\le} 2|\calt|\stackrel{{\eqref{e:|calt|}}}{\le} 2n^{0.999} \leq \beta m,
\]
and, by Claim~\ref{cl:allocate}\ref{l:tl/a-V0-neigh}.
\[
  \bigl| B \cap \varphi\bigl(V(Q)\setminus M\bigr) \cap V_i\bigr|   \leq \frac{6\eps m}{\alpha} \leq \beta m,
\]
Finally,
\[
  \bigl| \bigl(V_i\setminus (\treeAttach\cup B\cup \pathRoot \cup \pathLast \cup \pathOther)\bigr) \cap \varphi\bigl(V(Q)\setminus M\bigr)\bigr|
  \le \Bigl|\, V_i \cap \varphi\bigl(\,V(Q)\setminus\!\!\! \bigcup_{P \in \calp^H}\!\!\! P\,\bigr)\,\Bigr|,
\]
and we can bound this quantity by $|V_i| - 2g$ if the paths in $\calp$ each have order two and by $|V_i|-7g$ if the paths in $\calp$ each have order seven. Together with the sizes of the sets $A,\, B,\, \Pi_1,\, \Pi_7$ and $\Pi_\mathrm{other}$ and bounds on degrees given in Claim~\ref{cl:embed/reserve}\ref{i:degree-from-V0-in-B} and~\ref{i:large-degree-reserved-for-attachments} (recalling also that $\Pi_7$ is empty if the paths in $\calp$ have order two), these bounds imply~\eqref{e:Zw}. In particular, it follows that when we define the families $\calu^+(w)$ and $\calu^-(w)$, each set within these families contains at least $\beta m$ vertices.

In Step~2 we wish to choose a $\ttau$-good vertex~$\vtau$. If $\ttau = r^T$ for some $T \in \calt$, then we need to do this with~$\vtau \in Z_{\ttau, w}$. Since $|Z_{\ttau, \tau}| \ge \beta m$ and $|Z_{w, \tau}| \ge \beta m$ for each $w \in C(\ttau)$ by~\eqref{e:Zw}, and $\Delta(\varphi^T)\le 5$ by~Claim~\ref{cl:allocate}, we may apply Lemma~\ref{l:goodness} to obtain a $\bigl(\calu^+(\vtau),\calu^-(\vtau),\beta,\gamma,\varphi,m\bigr)$-good set for~$S_{\ttau}$ in~$Z_{\ttau}$. By definition each vertex in this set is $\ttau$-good, so we can choose $\vtau$ as required. On the other hand, if $\ttau$ is not the root of a tree in $\calt$, then $\ttau$ has a parent $t_\sigma$ in $F$, and we wish to choose $\vtau$ in $R(\ttau)$, the set previously reserved at time $\sigma$ for the future embedding of $\ttau$. The set $R(\ttau)$ was chosen at time $\sigma$ to be a subset of $\subseteq Z_{\ttau,\sigma}$ which was $\bigl(\calu^+(\vtau),\calu^-(\vtau),\beta,\gamma,\varphi,m\bigr)$-good for~$S_{\ttau}$. Since no vertex was embedded in or reserved from $R(\ttau)$ between times $\sigma$ and $\tau$ we have $R(\ttau) \subseteq Z_{\ttau,\sigma}$, so by definition of $\bigl(\calu^+(\vtau),\calu^-(\vtau),\beta,\gamma,\varphi,m\bigr)$-good and~\eqref{e:Zw} we can choose $\vtau \in R(\ttau)$ as required.

Finally, in Step~3 we wish to reserve a set $R(w)$ for each child~$w\in C(\ttau)$. If $\ttau$ is neither a distinguished vertex nor a vertex of $M$, then we have just selected a $\ttau$-good vertex $\vtau$ for the image of $\ttau$, and we require that $R(w) \subseteq N_G^\bullet(v_\tau) \cap Z_{w, \tau}$, where $\bullet \in \{+, -\}$ is such that $w \in C^\bullet(\ttau)$. The $\ttau$-goodness of $\vtau$ ensures that $|N_G^\bullet(v_\tau) \cap Z_{w, \tau}| \geq \gamma m$ for each $\bullet \in \{+, -\}$ and $w \in C^\bullet(\ttau)$. Along with~\eqref{e:Zw} and the fact that $\Delta(\varphi^T)\le 5$ this enables us to apply Lemma~\ref{l:goodness} to obtain the desired $R(w)$ for each $w \in C(\ttau)$ (since we only need $|N_G^\bullet(v_\tau) \cap Z_{w, \tau}| \geq \gamma m/2$ to apply Lemma~\ref{l:goodness}, we may do this so that the sets $R(w)$ are also pairwise disjoint, as required). If instead $\ttau$ is a distinguished vertex or a vertex of $M$, then we instead require just that $R(w) \subseteq Z_{w, \tau}$. By~\eqref{e:Zw} we have $|Z_{w, \tau}| \geq \beta m$ for each $w \in C(\ttau)$, and so again we may apply Lemma~\ref{l:goodness} to obtain the desired $R(w)$ for each $w \in C(\ttau)$.
This concludes the proof of Claim~\ref{cl:embed}.

\smallskip

\begin{remark}
  The embedding algorithm can be significantly simplified
  if the goal is to embed an almost spanning structure
  (say, with at most $(1-\alpha)n$ vertices).
  In particular the sets $B,\pathRoot,\pathLast$ and~$\pathOther$
  would no longer be needed, and we can completely avoid embedding vertices to~$V_0$.
\end{remark}

\section{Acknowledgements}

We thank the anonymous reviewers for their thoughtful comments,
and for suggesting a shift of the focus of the paper toward the more general
result. We believe that our consequent restructuring of the content of this manuscript has improved the quality of the paper, with clearer and more concise arguments.

\bibliographystyle{plain}
\bibliography{semidegree.bib}

\begin{thebibliography}{10}

\bibitem{alon94}
N.~Alon, R.~A. Duke, H.~Lefmann, V.~R{\"o}dl, and R.~Yuster.
\newblock The algorithmic aspects of the regularity lemma.
\newblock {\em Journal of Algorithms}, 16(1):80--109, 1994.

\bibitem{balogh2011local}
J.~Balogh, B.~Csaba, and W.~Samotij.
\newblock Local resilience of almost spanning trees in random graphs.
\newblock {\em Random Structures \& Algorithms}, 38(1-2):121--139, 2011.

\bibitem{BondyMurty08}
J.~A. Bondy and U.{\,}S.{\,}R. Murty.
\newblock {\em Graph theory}, volume 244 of {\em {G}raduate {T}exts in
  {M}athematics}.
\newblock Springer-Verlag, New York, 2008.

\bibitem{BHKMPP18}
J.~B{\"o}ttcher, J.~Han, Y.~Kohayakawa, R.~Montgomery, O.~Parczyk, and
  Y.~Person.
\newblock Universality for bounded degree spanning trees in randomly perturbed
  graphs.
\newblock {\em Random Structures \& Algorithms}, 55(4):854--864, 2019.

\bibitem{BST09:bandwidth}
J.~B{\"o}ttcher, M.~Schacht, and A.~Taraz.
\newblock Proof of the bandwidth conjecture of {B}ollob{\'a}s and {K}oml{\'o}s.
\newblock {\em Mathematische Annalen}, 343(1):175--205, 2009.

\bibitem{clemens2015building}
D.~Clemens, A.~Ferber, R.~Glebov, D.~Hefetz, and A.~Liebenau.
\newblock Building spanning trees quickly in maker-breaker games.
\newblock {\em SIAM Journal on Discrete Mathematics}, 29(3):1683--1705, 2015.

\bibitem{debiasio2015arbitrary}
L.~{DeBiasio}, D.~K{\"u}hn, T.~Molla, T.~Osthus, and A.~Taylor.
\newblock Arbitrary orientations of hamilton cycles in digraphs.
\newblock {\em SIAM Journal on Discrete Mathematics}, 29(3):1553--1584, 2015.

\bibitem{debiasio15:antidir_Ham}
L.~DeBiasio and T.~Molla.
\newblock Semi-degree threshold for anti-directed {H}amiltonian cycles.
\newblock {\em Electronic Journal of Combinatorics}, 22(4):P4.34, 2015.

\bibitem{diestel97:_graph}
R.~Diestel.
\newblock {\em Graph theory}, volume 173 of {\em Graduate Texts in
  Mathematics}.
\newblock Springer-Verlag, New York, 1997.
\newblock Translated from the 1996 German original.

\bibitem{Ghouila60}
M.{\,}A. Ghouila-Houri.
\newblock Une condition suffisante d'existence d'un circuit {H}amiltonien.
\newblock {\em Comptes Rendus de l'Acad{\'e}mie des Sciences}, 25:495--497,
  1960.

\bibitem{JLR00:randomgraphs}
S.~Janson, T.~{\L}uczak, and A.~Ruci{\'n}ski.
\newblock {\em Random graphs}.
\newblock Wiley-Interscience, New York, 2000.

\bibitem{kathapurkar2022spanning}
A.~Kathapurkar and R.~Montgomery.
\newblock Spanning trees in dense directed graphs.
\newblock {\em Journal of Combinatorial Theory, Series B}, 156:223--249, 2022.

\bibitem{komlos95:_proof_bollob}
J.~Koml{\'o}s, G.{\,}N. S{\'a}rk{\"o}zy, and E.~Szemer{\'e}di.
\newblock Proof of a packing conjecture of {B}ollob\'as.
\newblock {\em Combinatorics, Probability, and Computing}, 4(3):241--255, 1995.

\bibitem{komlos01:spanning_trees_dense}
J.~Koml{\'o}s, G.{\,}N. S{\'a}rk{\"o}zy, and E.~Szemer{\'e}di.
\newblock Spanning trees in dense graphs.
\newblock {\em Combinatorics, Probability, and Computing}, 10:397--416, 2001.

\bibitem{KrivKwanSud17:trees_rand_pert}
M.~Krivelevich, M.~Kwan, and B.~Sudakov.
\newblock Bounded-degree spanning trees in randomly perturbed graphs.
\newblock {\em SIAM J. Discrete Math.}, 31(1):155--–171, 2017.

\bibitem{KMO11:sumner_approximate}
D.~K{\"u}hn, R.~Mycroft, and D.~Osthus.
\newblock {An approximate version of Sumner's universal tournament conjecture.}
\newblock {\em Journal of Combinatorial Theory, Series B}, 101(6):415--447,
  2011.

\bibitem{mcdiarmid98:_concen}
C.~McDiarmid.
\newblock Concentration.
\newblock In {\em Probabilistic methods for algorithmic discrete mathematics},
  volume~16 of {\em Algorithms Combin.}, pages 195--248. Springer, Berlin,
  1998.

\bibitem{moon70:trees}
J.{\,}W. Moon.
\newblock {\em Counting labelled trees}.
\newblock Number~1 in Canadian Mathematical Monographs. Canadian Mathematical
  Congress, 1970.

\bibitem{MyNa18}
R.~Mycroft and T.~Naia.
\newblock Unavoidable trees in tournaments.
\newblock {\em Random Structures \& Algorithms}, 53(2):352--385, 2018.

\bibitem{sudakov10}
B.~Sudakov and J.~Vondr{\'a}k.
\newblock A randomized embedding algorithm for trees.
\newblock {\em Combinatorica}, 30(4):445--470, 2010.

\bibitem{szemeredi75}
E.~Szemer{\'e}di.
\newblock On sets of integers containing no {$k$} elements in arithmetic
  progression.
\newblock {\em Acta Arith.}, 27:199--245, 1975.
\newblock Collection of articles in memory of Juri\u\i\ Vladimirovi\v c Linnik.

\bibitem{szemeredi78:_regul}
E.~Szemer{\'e}di.
\newblock Regular partitions of graphs.
\newblock In {\em Probl\`emes combinatoires et th\'eorie des graphes (Colloq.
  Internat. CNRS, Univ. Orsay, Orsay, 1976)}, pages 399--401. CNRS, Paris,
  1978.

\end{thebibliography}
\end{document}